\documentclass[11pt]{article}
\usepackage{amsmath}
\usepackage{amssymb,amsmath}
\usepackage{mathrsfs}
\usepackage{amsmath,amsthm,amsfonts,amssymb,bm,euscript}
\usepackage{fancyhdr,amscd}
\usepackage{anysize}
\usepackage{graphicx,epsfig}
\usepackage{amsthm,latexsym,amsmath,amssymb}
\usepackage{amssymb,amsmath}
\usepackage{mathrsfs}
\usepackage{graphicx}
\usepackage{color}
\usepackage{mathrsfs}
\usepackage{cite}
\usepackage{indentfirst}
 \usepackage[titletoc]{appendix}
 \usepackage [latin1]{inputenc}

\usepackage{amsmath, amssymb}
\usepackage{amsthm, amsfonts, mathrsfs}
\usepackage{mathptmx}
\usepackage{fullpage}
\usepackage{amsfonts,graphicx}
\usepackage[colorlinks=true,  linkcolor=blue, citecolor=blue, urlcolor=blue]{hyperref}
 \setlength{\oddsidemargin}{0mm}
\setlength{\evensidemargin}{0mm} \setlength{\topmargin}{-20mm}
\setlength{\textheight}{247.4mm} \setlength{\textwidth}{163mm}



\let\r=\rho



\def\l{\frak l}

\def\C{\mathop{\bf C\kern 0pt}\nolimits}
\def\DD{\mathop{\bf D\kern 0pt}\nolimits}
\def\K{\mathop{\bf K\kern 0pt}\nolimits}
\def\N{\mathop{\bf N\kern 0pt}\nolimits}
\def\Q{\mathop{\bf Q\kern 0pt}\nolimits}
\def\R{\mathop{\bf R\kern 0pt}\nolimits}

\def \l {\langle}
\def \r {\rangle}
\def\V{{\mathbf{V}}}
\def\H{\mathbf{H}}
\def\W{\mathbf{W}}
\def\A{\mathbf{A}}

\def\uu{\textbf{\textit{u}}}
\def\vv{\textbf{\textit{v}}}

\def\C{\mathcal{C}}


\newcommand{\Dv}{{\rm div}}

\newcommand{\beq}{\begin{equation}}
\newcommand{\eeq}{\end{equation}}
\newcommand{\ben}{\begin{eqnarray}}
\newcommand{\een}{\end{eqnarray}}
\newcommand{\beno}{\begin{eqnarray*}}
\newcommand{\eeno}{\end{eqnarray*}}



\newtheorem{theorem}{Theorem}[section]

\newtheorem{lemma}[theorem]{Lemma}




\numberwithin{equation}{section}

\allowdisplaybreaks \numberwithin{equation} {section}
\begin{document}
\title{ On the uniqueness of strong solution to the   nonhomogeneous incompressible Navier-Stokes-Cahn-Hilliard system}
\author{ Lingxin  Jiang$^{1}$ \ \  Jiahong Wu$^{2}$ \ \   Fuyi  Xu$^{1\dag}$\\[2mm]
 { \small   $^1$School of Mathematics and  Statistics, Shandong University of Technology,}\\
  { \small Zibo,    255049,  Shandong,    China}\\
  { \small  $^2$Department of Mathematics, University of Notre Dame,}\\
   { \small Notre Dame, IN 46556, USA}
   }
         \date{}
\maketitle
\noindent{\bf Abstract} \ \ This paper is mainly concerned with an initial-boundary value problem of the  nonhomogeneous incompressible Navier-Stokes-Cahn-Hilliard system   with the Landau potential in a two and three dimensions. The  existence of  strong solutions with bounded and strictly positive density for this system was constructed by  Giorgini and Temam \cite{GT}. However, 
whether uniqueness holds 
has remained an open question. The present work solves this question and  we prove the  uniqueness of  strong solution. Our method mainly  relies on some extra time weighted estimates  and the Lagrangian approach.

 \vskip   0.2cm \noindent{\bf Key words:}\ \  uniqueness; the nonhomogeneous incompressible Navier-Stokes-Cahn-Hilliard system;  strong solution; Lagrangian approach.

\setlength{\baselineskip}{20pt}

\vskip .2in
\section{Introduction and Main Results}  In the present paper, we are interested in the study of incompressible diphasic nonhomogeneous mixtures flows. The model consists of a Cahn-Hilliard equation  coupled with a nonhomogeneous Navier-Stokes equations derived by Lowengrub and Truskinovsky \cite{LT} in 1998, which can be written as follows 
\begin{equation}\label{NSCH}
\begin{cases}
\partial_{t} \rho + \mathbf{u} \cdot \nabla \rho = 0, \\
\rho\partial_{t} \mathbf{u} + \rho (\mathbf{u}\cdot \nabla) u - \mathrm{div}(\nu(\phi) D u) + \nabla P = - \mathrm{div}(\nabla \phi \otimes \nabla \phi), \\
\mathrm{div} \, \mathbf{u} = 0 ,\\
\rho \partial_{t} \phi + \rho \mathbf{u} \cdot \nabla \phi = \Delta \mu, \\
\rho \mu = - \Delta \phi + \rho \Phi'(\phi),
\end{cases}
\end{equation}
in $\Omega\times(0,T)$, where  $\Omega$ is a bounded domain in $\mathbb{R}^d$($d=2,3$) with  a regular boundary $\partial\Omega$,  \(T > 0\) is a given positive time,  \(\rho = \rho(x,t)\) is the density of the mixture, \(\mathbf{u} = \mathbf{u}(x,t)\) is the (mass-averaged) velocity of the mixture, \(P = P(x,t)\) is the pressure of the mixture,  \(\varphi=\varphi(x,t)\) is the difference of fluids concentrations, \(\mu = \mu(x,t)\) is the chemical potential, $D\uu =  \frac12\big(\nabla \uu+ (\nabla \uu)^t\big)$ stands for the deformation tensor, whereas the viscosity function  $\nu  $ is assumed to satisfy:
 \begin{equation}\label{NSCH-N} \nu = \nu(s) \in W^{1, \infty}(\mathbb{R}), \quad  0 < \nu_* \leq \nu(s) \leq \nu^*\quad  \text{for all}\quad s \in \mathbb{R}.\end{equation}
 For the potential $ \Phi(s) $ (also called homogeneous free energy density), we will consider the following  physically relevant  Landau potential
 \begin{equation}\label{NSCH-W}\Phi_0(s) = \frac{1}{4} (s^2 - 1)^2 \quad \forall s \in \mathbb{R}. \end{equation}
Here,  we also supplement the system \eqref{NSCH} with the following boundary and initial conditions
\begin{equation}\label{NSCH1}
\begin{cases}
\boldsymbol{u} = 0, \quad \partial_{\mathbf{n} } \mu = \partial_{\mathbf{n} } \phi = 0 & \text{on } \partial \Omega \times (0, T), \\
\rho(\cdot, 0) = \rho_0, \quad \boldsymbol{u}(\cdot, 0) = \boldsymbol{u}_0, \quad \phi(\cdot, 0) = \phi_0 & \text{in } \Omega,
\end{cases}
\end{equation}
where $\mathbf{n} $ is the unit outward normal vector to  $\partial\Omega$.

It is well known that the application of the diffuse interface (or phase-field) theory has became a fundamental method in fluid mechanics to model and simulate large deformations and topological transitions in two-phase flows. Originally developed for phase transition/separation phenomena, the Diffuse Interface methodology is based on the description of the interface separating the two fluids as a narrow region with finite thickness across which continuous fields can change smoothly their values. The evolution of the macroscopic state variables is derived through the combination of the continuum theory of mixtures, classical thermodynamics and statistical mechanics. We refer the reader to the review articles \cite{AMW,GKL,GZ}. A paradigm model of the Diffuse Interface theory for two-phase flows is the  homogeneous Navier-Stokes-Cahn-Hilliard ( NSCH for short) system (i.e., $\rho(t, x) = \text{constant}$ in the system \eqref{NSCH}, also called Model H after the seminal work \cite{HH} on dynamic critical phenomena, which  was derived in \cite{GPV} within the framework of continuum mechanics and in \cite{LS} through an energetic variational approach (see also \cite{GKL} for a recent review). Over the past years there have been important developments concerning the mathematical modeling and analysis of NSCH system for binary mixtures (see, e.g., \cite{A, AAR, ADG, AW, BDM, Bo,  HW, UM, AMH, TD, B1, CG, SGM, T, GGM}) and references therein.

When the density of the mixture is an independent variable of the system and the mass-averaged velocity of the mixture is divergence-free, one need consider the generalization of the Model H called the nonhomogeneous incompressible Navier-Stokes-Cahn-Hilliard  equations, that is, the system \eqref{NSCH}, whose derivation is based on the conservation of mass and linear momentum and the second law of thermodynamic in the form of dissipation inequality as in \cite{AF,GPV,HMR,LT}. From the viewpoint of partial differential equations,  the nonhomogeneous Navier-Stokes-Cahn-Hilliard system is a highly nonlinear system coupling between hyperbolic equations and parabolic equations. Therefore, regarding the mathematical analysis of the system \eqref{NSCH}  is very recent, we mention the work \cite{ZH}, in which the author investigated the local existence of classical solutions in the case of constant viscosity and Landau potential, and \cite{GT} where the authors proved the existence of global weak solutions to the system with   non-constant viscosity,  bounded and strictly positive density and the free energy potential equal to either the Landau potential or the Flory-Huggins logarithmic potential in two and three dimensions and the existence of   strong solutions (global if $d=2$ and local if $d=3$)  in the case of Landau potential. Here we first recall the  existence of strong solutions to the multi-dimensional  initial-boundary value problem \eqref{NSCH}-\eqref{NSCH1}.
\begin{theorem}\label{1.1}\cite{GT}
Let $\Omega$ be a bounded domain of class $C^3$ in $\mathbb{R}^d$, where $d = 2,3$. Assume that $\rho_0\in L^{\infty}(\Omega)$, $\mathbf{u}_0\in \mathbf{V}_{\sigma}(\Omega)$ and $\phi_0\in H^2(\Omega)$ are given such that $0<\rho_*\leq \rho_0\leq \rho^*$, $\partial_{\mathbf{n}}\phi_0 = 0$ on $\partial\Omega$, and $\mathbf{u}_0 = -\frac{\Delta\phi_0}{\rho_0}+\Phi_0'(\phi_0)\in H^1(\Omega)$. Then, we have

\textbf{(i)}\begin{equation}\label{NSCH2}
0<\rho_* \leq \rho(x, t) \leq \rho^* \quad \text{a.e. in } \Omega \times (0, T). \end{equation}

\textbf{(ii)} If \( d = 2 \),  for any \( T > 0 \), there exists a  strong solution \( (\rho, \boldsymbol{u}, P, \phi, \mu) \) to the initial-boundary value problem \eqref{NSCH}-\eqref{NSCH1} satisfying
\begin{align*}
\rho &\in \mathcal{C}([0,T]; L^r(\Omega)) \cap L^\infty(\Omega \times (0,T)) \cap L^\infty(0,T; H^{-1}(\Omega)), \\
\boldsymbol{u} &\in \mathcal{C}([0,T]; \mathbf{V}_\sigma) \cap L^2(0,T; H^2(\Omega)) \cap H^1(0,T; \mathbf{H}_\sigma), \\
P&\in L^2(0,T; H^1(\Omega)), \\
\phi &\in \mathcal{C}([0,T]; (W^{2,q}(\Omega))_w) \cap H^1(0,T; H^1(\Omega)), \\
\mu &\in L^\infty(0,T; H^1(\Omega)) \cap L^2(0,T; W^{2,q}(\Omega)),
\end{align*}
for any \( r \in [2, \infty) \) and any \( q \in [2, \infty) \).

\textbf{(iii)} If $d = 3$, there exist $T>0$ depending on the norms of the initial data, and a strong solution $(\rho,\mathbf{u},P,\phi,\mu)$ to the initial-boundary value problem   \eqref{NSCH}-\eqref{NSCH1} satisfying
    \begin{align*}
        \rho&\in \mathcal{C}([0,T];L^r(\Omega))\cap L^{\infty}(\Omega\times(0,T))\cap L^{\infty}(0,T;H^{-1}(\Omega)),\\
        \mathbf{u}&\in \mathcal{C}([0,T];\mathbf{V}_{\sigma})\cap L^2(0,T;H^2(\Omega))\cap H^1(0,T;\mathbf{H}_{\sigma}),\\
        P&\in L^2(0,T; H^1(\Omega)), \\
        \phi&\in \mathcal{C}([0,T];(W^{2,6}(\Omega))_w)\cap H^1(0,T;H^1(\Omega)),\\
        \mu&\in L^{\infty}(0,T;H^1(\Omega))\cap L^2(0,T;W^{2,6}(\Omega)),
    \end{align*}
    for any $r\in[2,\infty)$.
\end{theorem}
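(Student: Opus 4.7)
The plan is to construct strong solutions by combining a semi-Galerkin approximation with energy and higher-order a priori estimates strong enough to pass to the limit, then to distinguish the dimensions at the very end. I would fix a Faedo--Galerkin basis for $\mathbf{u}$ adapted to the Stokes operator with no-slip boundary conditions and a basis for $\phi,\mu$ adapted to the Neumann Laplacian, while keeping the transport equation for $\rho$ at the PDE level (optionally regularized by $\varepsilon\Delta\rho$ and then sending $\varepsilon\to 0$). For each $n$ the resulting coupled ODE--transport system is locally solvable, and because $\dive\mathbf{u}_n=0$ the $L^\infty$ bounds $\rho_*\le\rho_n\le\rho^*$ are preserved, giving \eqref{NSCH2} at the approximate level.

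Next I would derive the basic energy identity: test the momentum equation with $\mathbf{u}_n$, the Cahn--Hilliard equation with $\mu_n$, and use the vector identity $-\dive(\nabla\phi\otimes\nabla\phi)\cdot\mathbf{u}$ combined with the constitutive relation $\rho\mu=-\Delta\phi+\rho\Phi'(\phi)$ to produce
\begin{equation*}
\frac{\d}{\d t}\int_{\Omega}\Big(\tfrac{1}{2}\rho|\mathbf{u}|^{2}+\tfrac{1}{2}|\nabla\phi|^{2}+\rho\Phi(\phi)\Big)\d x+\int_{\Omega}\Big(\nu(\phi)|D\mathbf{u}|^{2}+|\nabla\mu|^{2}\Big)\d x=0,
\end{equation*}
yielding $L^{\infty}_{t}L^{2}_{x}$ bounds on $\sqrt{\rho}\,\mathbf{u}$ and $\nabla\phi$ and $L^{2}_{t}H^{1}_{x}$ bounds on $\mathbf{u}$ and $\mu$.

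The heart of the argument is the higher-order estimate. I would differentiate the momentum equation in time and test with $\mathbf{u}_{t}$; the terms generated by $\rho_{t}=-\mathbf{u}\cdot\nabla\rho$ are absorbed by interpolation once $L^{2}_{t}H^{2}_{x}$ control on $\mathbf{u}$ is activated through the Stokes operator with variable density. In parallel, the elliptic relation $-\Delta\phi=\rho(\mu-\Phi'(\phi))$ with homogeneous Neumann data upgrades $\phi$ to $W^{2,q}$, and reading the Cahn--Hilliard equation as a fourth-order problem for $\mu$ then gives $L^{\infty}_{t}H^{1}_{x}\cap L^{2}_{t}W^{2,q}_{x}$. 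The compatibility hypothesis (which appears to intend $\mu_{0}:=-\Delta\phi_{0}/\rho_{0}+\Phi_{0}'(\phi_{0})\in H^{1}$ rather than a constraint on $\mathbf{u}_{0}$) is precisely what is needed to initialize the Gr\"onwall argument for $\|\mathbf{u}_{t}\|_{L^{2}}$ at $t=0$.

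Finally, uniform bounds plus Aubin--Lions compactness produce a limit solving \eqref{NSCH}--\eqref{NSCH1}, with $P$ recovered by de Rham. The dimensional split occurs only when closing the nonlinear estimates: in $d=2$ the Ladyzhenskaya interpolation $\|\cdot\|_{L^{4}}\le C\|\cdot\|_{L^{2}}^{1/2}\|\cdot\|_{H^{1}}^{1/2}$ is subcritical and Gr\"onwall closes on any $[0,T]$, whereas in $d=3$ the corresponding embedding is only $H^{1}\hookrightarrow L^{6}$ and the estimate can be closed only on an interval $[0,T_{0}]$ whose length depends on the initial norms. The main obstacle is controlling the capillary stress $\dive(\nabla\phi\otimes\nabla\phi)$ on the right-hand side of the momentum equation simultaneously with the variable-density effects; this forces the parabolic--elliptic bootstrap between $\phi$ and $\mu$ and ultimately dictates the regularity classes listed in the statement.
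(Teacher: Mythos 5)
The paper does not prove Theorem \ref{1.1} at all: it is quoted from Giorgini--Temam \cite{GT} and used as a black box (the paper's own work begins with the weighted estimates of Section 3). So there is no internal proof to compare against; your sketch has to be measured against the strategy of \cite{GT}, and at that level it is the right one: a semi-Galerkin scheme with the transport equation kept at the PDE level (so that $\dive \mathbf{u}_n=0$ preserves $\rho_*\le\rho_n\le\rho^*$), the basic energy law obtained by testing with $\mathbf{u}$ and $\mu$, a time-differentiated higher-order estimate, elliptic regularity for $\phi$ and $\mu$, Aubin--Lions compactness with de Rham for the pressure, and the $d=2$ versus $d=3$ split coming from Ladyzhenskaya-type interpolation versus $H^1\hookrightarrow L^6$. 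Your reading of the hypothesis as a compatibility condition $\mu_0:=-\Delta\phi_0/\rho_0+\Phi_0'(\phi_0)\in H^1(\Omega)$, rather than a constraint on $\mathbf{u}_0$, is also the correct interpretation of what is evidently a typo in the statement as reproduced here.

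Two points in your outline are, however, too thin to count as a proof. First, with $\rho$ only bounded, the terms generated by $\rho_t$ after differentiating the momentum equation in time cannot simply be ``absorbed by interpolation'': the only workable mechanism is to write $\rho_t=-\mathrm{div}(\rho\mathbf{u})$ and integrate by parts so that no derivative of $\rho$ ever appears, paying instead with terms in $\nabla\mathbf{u}_t$, $\nabla\phi_t$, $\nabla\mu_t$ that must be absorbed by the dissipation; this is precisely the delicate bookkeeping carried out (in time-weighted form) in Section 3 of the present paper, and it is where the real work lies. Second, $\mu\in L^2(0,T;W^{2,q})$ and $\phi\in\mathcal{C}([0,T];(W^{2,q})_w)$ do not follow from the constitutive relation and ``reading the Cahn--Hilliard equation as a fourth-order problem'': since $\Delta\mu=\rho(\phi_t+\mathbf{u}\cdot\nabla\phi)$, you need $\phi_t\in L^2(0,T;L^q)$, which requires a separate higher-order estimate on the time-differentiated Cahn--Hilliard system (testing with $\mu_t$ and $\phi_t$, again handling $\rho_t$ by the same div-trick), not only the time-differentiated momentum estimate you describe. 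With those two blocks supplied, your plan is the standard and correct route to Theorem \ref{1.1}.
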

Here, it should be emphasized that, authors in \cite{GT}  remained  an interesting open  issue is to prove the uniqueness of  strong solution constructed in Theorem \ref{1.1}. Recently,  for the  two dimensional case,   Giorgini  et al. \cite{GT1} proved uniqueness of  regular solutions to the initial-boundary value problem \eqref{NSCH}-\eqref{NSCH1} with an additional smoothness assumption on the initial density. More precisely, they obtained  the following theorem.
\begin{theorem}\label{1.1-1} \cite{GT1}  Let $d=2$, and let the assumptions of Theorem \ref{1.1} hold. Assume that $\rho_0\in W^{1,m}(\Omega)$ for some $m>4$. Then, there exists a unique global  regular solution $(\rho,\mathbf{u},P,\phi,\mu)$ for the initial-boundary value problem \eqref{NSCH}-\eqref{NSCH1}.
\end{theorem}

However,  whether uniqueness of  strong solution  constructed by Theorem \ref{1.1} holds 
remains unresolved.  The purpose of this work is to solve this question  and  we finally establish uniqueness of  strong solution to the initial-boundary value problem \eqref{NSCH}-\eqref{NSCH1}  in the framework in \cite{GT}. Our  main result is the following theorem.
\begin{theorem}\label{1.2} For some $T>0$, let $(\rho_{1},\mathbf{u}_{1},P_{1},\phi_{1},\mu_{1})$ and $(\rho_{2},\mathbf{u}_{2},P_{2},\phi_{2},\mu_{2})$ be two  solutions to the initial-boundary value problem  \eqref{NSCH}-\eqref{NSCH1}  on $[0,T]\times\Omega$  constructed by Theorem \ref{1.1} corresponding to the same initial data.  Then, $(\rho_{1},\mathbf{u}_{1},P_{1},\phi_{1},\mu_{1})\equiv(\rho_{2},\mathbf{u}_{2},P_{2},\phi_{2},\mu_{2})$ on $[0,T]\times\Omega$.
\end{theorem}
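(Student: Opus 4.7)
\medskip

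\noindent\textbf{Proof proposal.} The fundamental obstruction to a direct Eulerian uniqueness argument is that $\rho_0\in L^\infty(\Omega)$ alone gives no control on $\nabla\rho$, so in a difference estimate the term $(\rho_1-\rho_2)\partial_t\uu_2$ cannot be absorbed by the parabolic part of the momentum equation. The plan is to circumvent this by passing to Lagrangian coordinates (where $\rho$ is frozen to its initial value along trajectories) after first upgrading the regularity of each solution near $t=0$ by time--weighted estimates.

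\medskip

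\noindent\textit{Step 1: time--weighted regularity.} Starting from the class in Theorem \ref{1.1}, I would propagate the following weighted bounds on any interval $[0,T^*]$ with $T^*\leq T$ (possibly shrinking $T^*$ in the $3$D case):
\begin{equation*}
\sqrt{t}\,\partial_t\uu\in L^\infty(0,T^*;\mathbf{H}_\sigma)\cap L^2(0,T^*;\mathbf V_\sigma),\qquad
\sqrt{t}\,\uu\in L^\infty(0,T^*;H^2(\Omega))\cap L^2(0,T^*;W^{1,\infty}),
\end{equation*}
\begin{equation*}
\sqrt{t}\,\partial_t\phi\in L^\infty(0,T^*;H^1)\cap L^2(0,T^*;H^2),\qquad
\sqrt{t}\,\phi\in L^\infty(0,T^*;W^{3,q})
\end{equation*}
for some $q>d$. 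These are obtained by differentiating the momentum equation in time, testing against $t\,\partial_t\uu$, and using the same manipulations on the coupled equations $\rho\partial_t\phi+\rho\uu\cdot\nabla\phi=\Delta\mu$ and $\rho\mu=-\Delta\phi+\rho\Phi_0'(\phi)$, treating the capillary force $-\dive(\nabla\phi\otimes\nabla\phi)$ as a lower order perturbation thanks to the strong regularity already available for $\phi$ and $\mu$. The key consequence is $\nabla\uu\in L^1(0,T^*;L^\infty)$, which guarantees a bi--Lipschitz flow.

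\medskip

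\noindent\textit{Step 2: Lagrangian reformulation.} For each solution $(\rho_i,\uu_i,P_i,\phi_i,\mu_i)$ define the flow $X_i(t,y)$ by $\partial_tX_i=\uu_i(t,X_i)$, $X_i(0,y)=y$, and set
\begin{equation*}
\tilde\uu_i(t,y)=\uu_i(t,X_i(t,y)),\qquad \tilde\phi_i(t,y)=\phi_i(t,X_i(t,y)),\qquad \tilde\rho_i(t,y)=\rho_0(y),
\end{equation*}
and similarly for $P_i,\mu_i$. Writing $A_i=(\nabla X_i)^{-1}$, the incompressibility yields $\det\nabla X_i\equiv 1$, and the system becomes a fully nonlinear but \emph{density--coefficient--frozen} problem on the fixed domain $\Omega$, of the schematic form
\begin{equation*}
\rho_0\,\partial_t\tilde\uu_i-\dive(\nu(\tilde\phi_i)D\tilde\uu_i)+\nabla\tilde P_i=F_i(\tilde\uu_i,\tilde\phi_i,A_i),
\qquad \dive(A_i^T\tilde\uu_i)=0,
\end{equation*}
coupled with Lagrangian versions of the Cahn--Hilliard pair, with boundary and initial data identical for $i=1,2$. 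The time--weighted $W^{1,\infty}$ control from Step 1 gives $\|A_i(t)-\mathrm{Id}\|_{L^\infty}\leq\int_0^t\|\nabla\uu_i\|_{L^\infty}\,ds$ and hence smallness of $A_i-\mathrm{Id}$ on a short sub--interval $[0,T_1]$.

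\medskip

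\noindent\textit{Step 3: difference estimate.} Let $\delta\uu=\tilde\uu_1-\tilde\uu_2$, $\delta P=\tilde P_1-\tilde P_2$, $\delta\phi=\tilde\phi_1-\tilde\phi_2$, $\delta\mu=\tilde\mu_1-\tilde\mu_2$, $\delta A=A_1-A_2$, all with vanishing initial trace. Since $\rho_0$ now multiplies $\partial_t\delta\uu$, a standard $L^2$--energy estimate for $(\delta\uu,\delta\phi,\delta\mu)$ is well posed; the difference of the two capillary/Cahn--Hilliard systems is linearly controlled by $(\delta\uu,\delta\phi)$ modulo commutators with $A_i$, and those commutators are quadratic in the smallness of $A_i-\mathrm{Id}$. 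Using the bound
\begin{equation*}
\|\delta A(t)\|_{L^\infty}\lesssim\int_0^t\|\nabla\delta\uu\|_{L^\infty}\,ds
\lesssim\int_0^t s^{-1/2}\big(\sqrt{s}\,\|\nabla\delta\uu\|_{L^\infty}\big)\,ds,
\end{equation*}
together with the weighted norms from Step 1 applied to both solutions, one arrives at an inequality of the form
\begin{equation*}
\mathcal E(t)+\int_0^t\mathcal D(s)\,ds\leq C\int_0^t K(s)\,\mathcal E(s)\,ds,
\qquad K\in L^1(0,T_1),
\end{equation*}
with $\mathcal E$ an appropriate weighted energy of the differences. Gronwall's lemma then forces $\mathcal E\equiv 0$ on $[0,T_1]$, and a bootstrap in $T_1$ covers $[0,T]$.

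\medskip

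\noindent\textit{Expected main obstacle.} The delicate point is the Cahn--Hilliard coupling: the equation $\rho\mu=-\Delta\phi+\rho\Phi_0'(\phi)$ does not behave like a standard parabolic equation under the Lagrangian change of variables because of the fourth--order operator hidden in $\Delta\mu$, and its Lagrangian image $\dive(A_1A_1^T\nabla\tilde\mu_1)$ produces commutator terms involving $\nabla A_i$, which are only controlled after dividing by $\sqrt{t}$. Closing the difference estimate therefore requires a careful choice of functional framework in which the Cahn--Hilliard block is estimated in a scale--invariant way against the weighted norms of Step 1; handling these commutators, rather than the Navier--Stokes block, is where the real work will lie.
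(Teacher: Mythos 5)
Your overall strategy coincides with the paper's: time-weighted estimates for $(\mathbf{u}_t,\phi_t,\mu_t)$, a Lipschitz-in-space bound for the velocity, and a difference estimate in Lagrangian coordinates where both solutions share the density $\rho_0$. However, there are two concrete gaps in the outline. First, the passage from your Step 1 norms to ``the key consequence $\nabla\mathbf{u}\in L^1(0,T^*;L^\infty)$'' does not follow. From $\sqrt{t}\,\mathbf{u}\in L^2(0,T^*;W^{1,\infty})$ you only get $\int_0^{T}\|\nabla\mathbf{u}\|_{L^\infty}\,dt\le\big(\int_0^{T}t^{-1}dt\big)^{1/2}\|\sqrt{t}\,\nabla\mathbf{u}\|_{L^2_tL^\infty_x}$, which diverges, and $\sqrt{t}\,\mathbf{u}\in L^\infty(0,T^*;H^2)$ does not give $\nabla\mathbf{u}\in L^\infty_x$ in dimensions $2,3$; interpolating these two endpoints still caps the temporal exponent at $2$ for the sup-norm in space. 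This is exactly why the paper devotes its Section 4 to a ``shift of integrability'': it first uses maximal regularity for the Stokes system and elliptic estimates (with the right-hand sides interpolated between $L^\infty_tL^2_x$ and $L^2_tL^6_x$) to obtain $\nabla^2(\sqrt{t}\,\mathbf{u})\in L^p(0,T;L^r)$ with $p\in(2,4)$ and $r=\frac{6p}{3p-4}>3$, and only then does the H\"older step against $t^{-1/2}$ converge, yielding $\int_0^T\|\nabla\mathbf{u}\|_\infty^s\,dt<\infty$ for $s\in[1,4/3)$ in 3D (and $s\in[1,2)$ in 2D). Without this intermediate $L^p_t$ with $p>2$ on second derivatives, the flow-map step has no foundation.

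Second, your Step 3 asserts that ``a standard $L^2$-energy estimate for $(\delta\mathbf{u},\delta\phi,\delta\mu)$ is well posed,'' but in Lagrangian coordinates the difference $\delta\mathbf{u}$ is \emph{not} solenoidal for the twisted divergence: $\operatorname{div}_{\mathbf{u}^1}\delta\mathbf{u}=(\operatorname{div}_{\mathbf{u}^1}-\operatorname{div}_{\mathbf{u}^2})\bar{\mathbf{u}}^2=\operatorname{div}(\delta A\,\bar{\mathbf{u}}^2)\neq 0$, so the pressure term $\int_\Omega\nabla_{\mathbf{u}^1}\delta P\cdot\delta\mathbf{u}\,dx$ does not vanish and there is no usable bound on $\delta P$ to absorb it. The paper resolves this with the decomposition $\delta\mathbf{u}=\varphi+\psi$, where $\varphi$ is produced by the Danchin--Mucha divergence lemma (Lemma 2.2) solving $\operatorname{div}_{\mathbf{u}^1}\varphi=\operatorname{div}(\delta A\,\bar{\mathbf{u}}^2)$ with bounds $\|\varphi\|_{L^4_tL^2}+\|\nabla\varphi\|_{L^2_{t,x}}+\|\varphi_t\|_{L^{4/3}_tL^{3/2}}\le c(T)\|\nabla\delta\mathbf{u}\|_{L^2_{t,x}}$; the energy estimate is then performed on $\psi$, against which the pressure disappears. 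Relatedly, the closing step cannot be a Gronwall inequality as you propose: several right-hand terms (those involving $\varphi$, $\varphi_t$, $\delta A$) are controlled only by the time-integrated dissipation $\|\nabla\delta\mathbf{u}\|_{L^2(0,T;L^2)}$, not by pointwise-in-time quantities, so the paper instead closes by a smallness argument (a factor $c(T)\to0$ as $T\to0$ multiplying the full difference norm), forcing the differences to vanish on a short interval, followed by a connectivity argument. These two devices --- the Section 4 integrability upgrade and the divergence-corrector decomposition --- are the essential missing ideas in your proposal; the Cahn--Hilliard commutators you flag are handled by the same $t^{-1/2}\,\delta A$ bookkeeping and are not the main obstruction.
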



 We here make a brief interpretation of the main difficulties and strategies involved in the proof.  First, since the density is only bounded, it seems impossible to prove the uniqueness of the strong  solution in the Eulerian coordinates as in \cite{CHK,GT1,Lijinkai}. Indeed, let $(\rho_{1},\mathbf{u}_{1},P_{1},\phi_{1},\mu_{1})$ and $(\rho_{2},\mathbf{u}_{2},P_{2},\phi_{2},\mu_{2})$ be two different solutions of the  system \eqref{NSCH}. Then
 $\delta \rho=\rho_1-\rho_2$ satisfies
 \begin{equation*}
\delta \rho_t + \mathbf{u}_1 \cdot \nabla \delta \rho = - (\mathbf{u}_1 - \mathbf{u}_2) \cdot \nabla \rho_2.
\end{equation*}
Without extra assumptions about the regularity of these solutions, the term $(\mathbf{u}_1 - \mathbf{u}_2) \cdot \nabla \rho_2$ cannot be handled by the energy method because the usual technique to prove uniqueness via Gronwall's inequality  cannot be applied here. And the uniqueness result of Germain \cite{PG} cannot be applied here either, which requires the density function satisfying
 $\nabla \rho \in L^{\infty}\big(0,T; L^{d}(\Omega)\big)$. Consequently, the uniqueness issue is non-trivial due to the roughness of the density and the hyperbolic nature of the continuity equation. To address this problem, we shall use the Lagrangian coordinates defined by the stream lines, which is motivated by \cite{RB1,RB,PZZ,FM,FMQ}. 
 According to the pioneering work by D. Hoff in \cite{DH} or to the recent papers \cite{RB1,RB,PZZ,FM,FMQ}, in most evolutionary fluid mechanics models, the condition  $\nabla \mathbf{u} \in L^{1}\big(0,T; L^{\infty}(\Omega)\big)$   seems to be the minimal requirement in order to get
uniqueness. However,  when the density is rough, propagate enough regularity for the velocity is the main difficulty. In order to  bound the quantity $\int_{0}^{T}\|\nabla \mathbf{u}\|_{L^{\infty}}d\tau$,  we first  exploit some  extra time-weighted energy estimates for the velocity field. Combining with these time-weighted estimates, interpolation results, classical Sobolev embedding, and shift of integrability from the time variable to the space variable, we eventually get the Lipschitz control of the velocity field.
Finally, it should be pointed out that, compared with the  nonhomogeneous incompressible Navier-Stokes equations, we also need  deal with some essential difficulties caused by the more complex nonlinear terms  and the hyperbolic-parabolic  coupling effect among the density,  the velocity field and the phase field in the system \eqref{NSCH}. 

 The rest of the paper unfolds as follows. In the next section, we shall introduce some  functional settings and   related analysis tools. In Section 3,
  we shall exploit  some extra  time-weighted estimates on time derivatives for  the strong solution  to the system \eqref{NSCH}.
  In section 4 we further derive the key estimate for quantity $\int_{0}^{T}\|(\nabla \mathbf{u}, \nabla \mu, \nabla\phi)(\tau)\|_{L^{\infty}}d\tau$.
The last section  is devoted to the proof of Theorem \ref{1.2}
\section{Preliminaries} This section reviews various tools, including some  functional settings,  important inequalities,  and useful lemmas that will be referenced throughout the paper.

Let $X$ be a (real) Banach or Hilbert space with norm denoted by $\| \cdot\|_X$. The boldface letter $\mathbf{X}$ stands for the vectorial space $X^d$ ($d$ is the spatial dimension), which consists of vector-valued functions $\uu$ with all components belonging to $X$, with norm $\| \cdot\|_{\mathbf{X}}$.
Let $\Omega$ be a bounded domain in $\mathbb{R}^d$, where $d=2$ or $d=3$, with smooth boundary $\partial \Omega$.
We denote by $W^{k,p}(\Omega)$, $k\in \mathbb{N}$, the Sobolev space of
functions in $L^p(\Omega)$ with distributional derivatives of order less than or
equal to $k$ in $L^p(\Omega)$ and by $\| \cdot \|_{W^{k,p}(\Omega)}$ its norm.
For $k\in \mathbb{N}$, the Hilbert space $W^{k,2}(\Omega)$ is denoted
by $H^k(\Omega)$ with norm $\|\cdot \|_{H^k(\Omega)}$.
We denote by $H_0^1(\Omega)$ the closure of
$\mathcal{C}_0^{\infty}(\Omega)$ in
$H^1(\Omega)$ and by $H^{-1}(\Omega)$ its dual space.
We define $H=L^2(\Omega)$. Its inner product and norm are denoted by $( \cdot,\cdot )$ and $\| \cdot \|$,
respectively. We set $V=H^{1}(\Omega)$ with norm $\|\cdot \|_V$, and
we denote its dual space by $V^{\prime}$ with norm $\|
\cdot \|_{V'}$. The symbol $\l \cdot, \cdot \r$ will stand
for the duality product between $V$ and $V'$.
We denote by $\overline{u}$ the average of $u$ over $\Omega$, that is $\overline{u}=|\Omega|^{-1}\l u,1\r$, for all $u\in V'$. By the generalized Poincar\'{e}'s inequality (see \cite[Chapter II, Section 1.4]{T}),
we recall that
$u \rightarrow (\| \nabla u\|^2+ |\overline{u}|^2)^\frac12$ is a norm on $V$ equivalent to the natural one.


We now introduce the Hilbert space of solenoidal vector-valued functions. We denote by $\mathcal{C}_{0,\sigma}^\infty(\Omega)$ the space of divergence free vector fields in $\mathcal{C}_{0}^\infty(\Omega)$.
We define $\H_\sigma$ and $\V_\sigma$ as the closure of $\mathcal{C}_{0,\sigma}^\infty(\Omega)$ with respect to the $\H$ and $\H_0^1(\Omega)$ norms, respectively.
We also use $( \cdot ,\cdot )$ and
$\Vert \cdot \Vert $ for
the norm and the inner product in $\H_\sigma$. The space $\V_\sigma$ is endowed with the inner product and norm
$( \uu,\vv )_{\V_\sigma}=
( \nabla \uu,\nabla \vv )$ and  $\|\uu\|_{\V_\sigma}=\| \nabla \uu\|$, respectively.
We denote by $\V_\sigma'$ its dual space.
We recall that  Korn's inequality
entails
\begin{equation}\label{korn}
\|\nabla\uu\|\leq \sqrt2\|D\uu\|\leq \sqrt2 \| \nabla \uu\|,
\quad \forall \, \uu \in \V_\sigma.
\end{equation}
In turn, the above inequality gives that $\uu \rightarrow \|D\uu\|$ is a norm on $\V_\sigma$ equivalent to the initial norm. We consider the Hilbert space
$\W_\sigma= \mathbf{H}^2(\Omega)\cap \V_\sigma$
with inner product and norm
$ ( \uu,\vv)_{\W_\sigma}=( \A\uu, \A \vv )$ and $\| \uu\|_{\W_\sigma}=\|\A \uu \|$, where $\A$ is the Stokes operator.
We recall that there exists $C>0$ such that
\begin{equation}
\label{H2equiv}
 \| \uu\|_{H^2(\Omega)}\leq C\| \uu\|_{\W_\sigma}, \quad \forall \, \uu\in \W_\sigma.
\end{equation}
We also  recall the following Gagliardo-Nirenberg and Agmon inequalities.
\begin{lemma}\label{chazhi}\cite{Temam}
\begin{align}
\label{LADY}
&\| u\|_{L^4(\Omega)}\leq C \|u\|^{\frac12}\|u\|_V^{\frac12}, \quad &&\forall \, u \in V, \quad \text{if} \ d=2,\\
\label{LADY3}
&\| u\|_{L^3(\Omega)}\leq C \|u\|^{\frac12}\|u\|_V^{\frac12}, \quad &&\forall \, u \in V, \quad \text{if} \ d=3,\\
\label{Agmon2d}
&\| u\|_{L^\infty(\Omega)}\leq C \|u\|^{\frac12}\|u\|_{H^2(\Omega)}^{\frac12}, \quad && \forall \, u \in H^2(\Omega), \quad \text{if} \ d=2,\\
\label{DL4}%
&\|\nabla u\|_{\mathbf{L}^4(\Omega)}\leq C \| u\|_{L^\infty(\Omega)}^\frac12 \| u\|_{H^2(\Omega)}^\frac12,  \quad &&\forall \, u \in H^2(\Omega), \quad \text{if} \ d=2,3.
\end{align}
\end{lemma}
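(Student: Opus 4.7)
The plan is to exhibit each of the four inequalities as a particular instance of the Gagliardo--Nirenberg interpolation theorem on a bounded domain $\Omega \subset \mathbb{R}^d$ with smooth boundary. Recall that the general form asserts, for $u$ in a suitable Sobolev space,
\[
\|D^j u\|_{L^p(\Omega)} \le C \,\|u\|_{W^{m,r}(\Omega)}^{a}\, \|u\|_{L^q(\Omega)}^{1-a},
\]
whenever $\tfrac{j}{m} \le a \le 1$ and the scaling condition
\[
\tfrac{1}{p} = \tfrac{j}{d} + a\bigl(\tfrac{1}{r} - \tfrac{m}{d}\bigr) + (1-a)\tfrac{1}{q}
\]
is satisfied (with a borderline restriction excluding $a=1$ when $m-j-d/r$ is a nonnegative integer). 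On a bounded domain with a smooth boundary, this is proved by extending $u$ to the whole space via a Calder\'on-type extension operator (available because $\partial\Omega$ is of class $C^3$), applying Nirenberg's classical whole-space inequality, and restricting back to $\Omega$.

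The four statements then follow by choosing the parameters $(j,p,m,r,q)$ appropriately and verifying that the exponent $a$ forced by the scaling relation equals $1/2$ and respects the constraint $j/m \le a \le 1$: for \eqref{LADY} take $(0,4,1,2,2)$ with $d=2$; for \eqref{LADY3} take $(0,3,1,2,2)$ with $d=3$; for \eqref{Agmon2d} take $(0,\infty,2,2,2)$ with $d=2$; and for \eqref{DL4} take $(1,4,2,2,\infty)$ with $d\in\{2,3\}$. A direct substitution into the scaling equation yields $a=\tfrac12$ in every case. The full $\|u\|_V$ and $\|u\|_{H^2(\Omega)}$ norms that appear on the right-hand sides (rather than the sharper semi-norms) absorb without effort the lower-order remainder that typically accompanies Gagliardo--Nirenberg on bounded domains, since $\|u\| \le \|u\|_V \le C\|u\|_{H^2(\Omega)}$.

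The main technical obstacle is the borderline endpoint $p=\infty$ in \eqref{Agmon2d}: the standard Sobolev embedding does not give $H^1(\Omega)\hookrightarrow L^\infty(\Omega)$ in two dimensions, and the Gagliardo--Nirenberg theorem at $p=\infty$ requires extra care. To handle this I would prove \eqref{Agmon2d} directly, after Calder\'on extension of $u$ to $\tilde u \in H^2(\mathbb{R}^2)$ satisfying $\|\tilde u\|_{H^k(\mathbb{R}^2)} \le C \|u\|_{H^k(\Omega)}$ for $k=0,1,2$, then using the Fourier inversion bound $\|\tilde u\|_{L^\infty}\le C\|\widehat{\tilde u}\|_{L^1}$ and splitting, for $R>0$,
\[
\|\widehat{\tilde u}\|_{L^1(\mathbb{R}^2)} \le \int_{|\xi|<R}|\widehat{\tilde u}|\,d\xi + \int_{|\xi|\ge R}|\widehat{\tilde u}|\,d\xi \le CR\|\tilde u\|_{L^2} + CR^{-1}\|\tilde u\|_{H^2},
\]
by Cauchy--Schwarz with weights $1$ and $|\xi|^{-2}$ respectively (the weights are integrable in two dimensions precisely at these thresholds). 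Optimizing $R=(\|\tilde u\|_{H^2}/\|\tilde u\|_{L^2})^{1/2}$ yields \eqref{Agmon2d}. The remaining estimates \eqref{LADY}, \eqref{LADY3}, and \eqref{DL4} are covered by the standard non-endpoint Gagliardo--Nirenberg theorem without additional difficulty, so no separate argument is required.
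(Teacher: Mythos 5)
Your proposal is correct. The paper itself offers no proof of this lemma --- it is stated with a citation to Temam's book and the inequalities are invoked as standard facts --- so any complete derivation is necessarily "extra" relative to the source. Your parameter bookkeeping checks out: substituting $(j,p,m,r,q)=(0,4,1,2,2)$, $(0,3,1,2,2)$, $(0,\infty,2,2,2)$ and $(1,4,2,2,\infty)$ into the Gagliardo--Nirenberg scaling relation does yield $a=\tfrac12$ in each case for the indicated dimensions, and the constraint $j/m\le a\le 1$ is satisfied (with equality $a=j/m=\tfrac12$ in \eqref{DL4}, which is an admissible endpoint). Your decision to treat \eqref{Agmon2d} separately is the right instinct: the Fourier-splitting argument with Cauchy--Schwarz on $\{|\xi|<R\}$ and $\{|\xi|\ge R\}$, followed by optimization in $R$, is exactly the classical proof of Agmon's inequality in two dimensions, and the extension operator reduces the bounded-domain case to the whole-space one. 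Two small remarks: first, for \eqref{DL4} one can avoid the general theorem entirely via the elementary identity $\int|\nabla u|^4\,dx=-\int u\,\mathrm{div}(\nabla u\,|\nabla u|^2)\,dx\le C\|u\|_{L^\infty}\|\nabla^2 u\|_{L^2}\|\nabla u\|_{L^4}^2$ (plus boundary terms controlled after extension), which makes the $a=j/m$ endpoint transparent; second, replacing the $L^2$ and seminorm factors by the full $\|\cdot\|_V$ and $\|\cdot\|_{H^2}$ norms is indeed harmless, as you note. No gaps.
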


 We here list a useful lemma  that have been used in the proof of uniqueness.
\begin{lemma}\cite{DM1,RB} \label{lemma:4.1} Let \( A \) be a matrix valued function on \( [0,T] \times \Omega\) satisfying
\begin{align}
\det A &\equiv 1.
\end{align}
There exists a constant \( c \) depending only on \( d \), such that if
\begin{align}
\| \text{Id} - A \|_{L^\infty(0,T; L^\infty)} + \| A_t \|_{L^2(0,T; L^6)} &\leq c,
\end{align}
then for all function \( R : [0,T] \times \Omega \to \mathbb{R}^d \) satisfying \( \operatorname{div} R \in L^2(0,T \times \Omega) \), \( R \in L^4(0,T; L^2) \), \( R_t \in L^{4/3}(0,T; L^{3/2}) \) and \( R \cdot \mathbf{n} \equiv 0 \) on \( (0,T) \times \partial \Omega \), the equation
\begin{align}
\operatorname{div}(Av) &= \operatorname{div} R =: g \quad \text{in} \quad [0,T] \times \Omega
\end{align}
admits a solution in the space
\[
X_T := \left\{ v \in L^2(0,T; H_0^1(\Omega)), \, v \in L^4(0,T; L^2(\Omega)) \, \text{and} \, v_t \in L^{4/3}(0,T; L^{3/2}(\Omega)) \right\},
\]
satisfying the following inequalities for some constant \( C = C(d) \):
\begin{equation}\label{4.9}
\begin{split}
&\| v \|_{L^4(0,T; L^2)}\leq C \| R \|_{L^4(0,T; L^2)}, \quad \| \nabla v \|_{L^2(0,T; L^2)} \leq C \| g \|_{L^2(0,T; L^2)},  \\
& \| v_t \|_{L^{4/3}(0,T; L^{3/2})} \leq C \| R \|_{L^4(0,T; L^2)} + C \| R_t \|_{L^{4/3}(0,T; L^{3/2})}.
\end{split}
\end{equation}
\end{lemma}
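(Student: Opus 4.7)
The plan is to construct $v$ by a Picard iteration built around the classical Bogovskii solver $\mathcal{B}$ for the standard divergence equation $\mathrm{div}\, w = h$ in $\Omega$ with $w|_{\partial\Omega} = 0$. Recall that $\mathcal{B}$ enjoys the two continuity properties $\|\nabla \mathcal{B}(h)\|_{L^p} \leq C \|h\|_{L^p}$ for $h \in L^p(\Omega)$ of vanishing mean, and the extended bound $\|\mathcal{B}(\mathrm{div}\, F)\|_{L^p} \leq C \|F\|_{L^p}$ whenever $F\cdot n = 0$ on $\partial\Omega$. I would then set $v_0 := \mathcal{B}(\mathrm{div}\, R)$ and iterate
\[
v_{n+1} := \mathcal{B}\bigl(\mathrm{div}(R - (A - \mathrm{Id})v_n)\bigr),\qquad n\geq 0.
\]
Since $v_n|_{\partial\Omega} = 0$ and $R \cdot n = 0$ on $\partial\Omega$, the field $R - (A - \mathrm{Id})v_n$ has zero normal trace, so $\mathcal{B}\circ \mathrm{div}$ is well-defined and $L^p$-continuous on it.

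The $L^4(0,T;L^2)$ bound in \eqref{4.9} follows immediately from the $L^p$-continuity: pointwise in time
\[
\|v_{n+1}\|_{L^2} \leq C \|R\|_{L^2} + C \|A - \mathrm{Id}\|_{L^\infty} \|v_n\|_{L^2},
\]
and integrating the fourth power in time yields $\|v_{n+1}\|_{L^4(L^2)} \leq C\|R\|_{L^4(L^2)} + Cc\|v_n\|_{L^4(L^2)}$; the smallness of $c$ produces both uniform boundedness and contraction on the differences $v_{n+1}-v_n$, giving a limit $v \in L^4(0,T;L^2)$. For the gradient estimate, the constraint $\det A \equiv 1$, together with the Lagrangian origin of $A$ as the inverse Jacobian of an incompressible flow (as in the cited \cite{DM1,RB}), furnishes the Piola identity $\partial_i A_{ij} = 0$, so the equation $\mathrm{div}(Av)=g$ rewrites pointwise as $A:\nabla v = g$, equivalently $\mathrm{div}\, v = g - (A - \mathrm{Id}) : \nabla v$. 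Applying the standard Bogovskii $W^{1,2}_0$-estimate to this identity gives $\|\nabla v\|_{L^2} \leq C\|g\|_{L^2} + Cc\|\nabla v\|_{L^2}$, and the smallness of $c$ absorbs the perturbative term, producing the desired $L^2(L^2)$-estimate on $\nabla v$.

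For the time derivative I would formally differentiate $\mathrm{div}(Av) = \mathrm{div}\, R$ in $t$ to obtain $\mathrm{div}(Av_t) = \mathrm{div}(R_t - A_t v)$, and rerun the same fixed-point argument at the level of $v_t$ in $L^{4/3}(0,T;L^{3/2})$. The only new source is $A_t v$, controlled by H\"older in both variables: pointwise $\|A_t v\|_{L^{3/2}} \leq \|A_t\|_{L^6}\|v\|_{L^2}$ (since $1/6 + 1/2 = 2/3$), then in time $\|A_t v\|_{L^{4/3}(L^{3/2})} \leq \|A_t\|_{L^2(L^6)}\|v\|_{L^4(L^2)} \leq Cc\|R\|_{L^4(L^2)}$ (since $1/2 + 1/4 = 3/4$), using the already established $L^4(L^2)$-bound on $v$. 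Combined with the Bogovskii-type bound applied to $\mathrm{div}(R_t - A_t v)$, this delivers the third estimate of \eqref{4.9}.

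The main obstacle I anticipate is precisely the gradient estimate, since applying $\mathcal{B}\circ\mathrm{div}$ to $(A-\mathrm{Id})v$ only yields an $L^p$-bound in the argument and gives no direct $W^{1,p}$-control, while $A$ itself is assumed only $L^\infty$. The resolution is the Piola identity, which transfers the derivative entirely onto $v$ in the identity $\mathrm{div}(Av) = A:\nabla v$ and permits absorption via smallness of $c$. Once the three estimates hold for the iterates, passage to the limit is routine: the contraction in $L^4(L^2)$, weak compactness in $L^2(H^1_0)$, and uniform control of $v_t$ in $L^{4/3}(L^{3/2})$ together identify a unique limit $v \in X_T$ solving $\mathrm{div}(Av)=g$ with all bounds of \eqref{4.9} and with $v|_{\partial\Omega}=0$ inherited from each $v_n$.
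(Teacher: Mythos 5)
The paper does not prove this lemma at all: it is imported verbatim from the cited works of Danchin and Mucha \cite{DM1,RB}, and your Bogovskii-based Neumann/Picard iteration, with smallness of $\|\mathrm{Id}-A\|_{L^\infty}$ giving the contraction, the zero-order bound $\|\mathcal{B}(\operatorname{div}F)\|_{L^p}\le C\|F\|_{L^p}$ for fields of vanishing normal trace, and the H\"older pairing $\|A_t v\|_{L^{4/3}(L^{3/2})}\le\|A_t\|_{L^2(L^6)}\|v\|_{L^4(L^2)}$ for the time derivative, is exactly the argument of those references, with all exponents correctly matched. The one point to be upfront about is the gradient estimate: the identity $\operatorname{div}(Av)={}^T\!A:\nabla v$ that lets you absorb $\|A-\mathrm{Id}\|_{L^\infty}\|\nabla v\|_{L^2}$ is \emph{not} a consequence of $\det A\equiv 1$ for an arbitrary matrix field, but of the Piola identity for the adjugate of a flow gradient; you correctly flag that you are using the Lagrangian provenance of $A$, which is how the lemma is used both in \cite{DM1,RB} and in this paper (cf.\ the definition $\operatorname{div}_{\mathbf{u}}Nv=\operatorname{div}_y(N\cdot v)={}^T\!N:\nabla v$ in Section 5), so this is a harmless imprecision of the statement rather than a gap in your proof.
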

\section{Some extra  weighted energy estimates } In order to perform the shift integrability from time to
space variables,  our main  aim  in this section is to exploit some extra time-weighted estimates, for example, $(\sqrt{\rho t}\mathbf{u}_t, \sqrt{\rho t}\phi_t,\sqrt{t}\nabla\phi_t)$ in $L^{\infty}([0,T]; L^2)$, $(\sqrt{t}\nabla \mathbf{u}_t, \sqrt{t}\nabla \mu_t)$ in $L^2([0,T]; L^2)$  and $\sqrt{t\rho}\mu_t$ in $L^{2}([0,T]; L^2)$ respectively, in terms of the data. This is presented  by the following two lemmas in two and three dimensions, respectively.
\begin{lemma}\label{lemma3.2}
Assume \(d = 3\), and that a strong solution $(\rho,\mathbf{u},P,\phi,\mu)$ to the initial-boundary value problem   \eqref{NSCH}-\eqref{NSCH1}. Then for all \(0\leq t\leq T\), we have
\begin{align}\label{3D}
\|\sqrt{\rho t}\mathbf{u}_t\|_2^2&+\|\sqrt{\rho t}\phi_t\phi\|_2^2+\|\sqrt{ t}\nabla\phi_t\|_2^2+\|\sqrt{\rho t}\phi_t\|_2^2
+\int_{0}^{t}\nu_*\tau\|\nabla \mathbf{u}_t\|_2^2d\tau\\ \notag
&+\int_{0}^{t}\tau\|\nabla\mu_t\|_2^2d\tau+\int_{0}^{t}\tau\rho\|\mu_t\|_2^2d\tau
\leq\exp\Big(\int_{0}^{t}h_1(\tau)d\tau\Big)-1,
\end{align}
where
\begin{align*}
&h_1(t)=
(1 + C + C_{\rho^*} + C_{\rho^*\rho_*T} + C_T)
\Big(\big( \|\phi\|_{12}^3 + \|\phi\|_4 \big)^2
\big( \|\nabla^2 \phi\|_4^2 + \|\nabla \phi\|_4^2 + \|\mathbf{u}\|_4^2 \big) \\
&+ \big( \|\nabla \phi\|_6 + \|\phi\|_\infty^2 \|\nabla\phi\|_6 \big)^2
\big( \|\mathbf{u}\|_6^2 + \|\nabla\phi\|_4^2  + \|\mathbf{u}\|_4^2 \big) + \|\mathbf{u}\|_4^2 \big( \|\nabla^2 \phi\|_6 + \|\nabla \phi\|_{12}^2 \|\phi\|_\infty + \|\nabla^2 \phi\|_6 \|\phi\|_\infty^2 \big)^2 \\
&+ \|\nabla \mathbf{u}\|_2^2 + \| \mu\|_4^2 \|\nabla \phi\|_4^2 + \|\nabla \phi\|_2^2 + \|\phi\|_\infty^4 + \|\nabla \mu\|_2^2 + \|\nabla \phi\|_\infty^2
 + \|\nabla^2 \phi\|_2^2 + \|\nabla \phi\|_4^2 + \|\phi\|_8^4 \|\nabla\phi\|_4^2 \\
&+ \|\nabla \phi\|_4^8 + \big( \|\phi\|_\infty^2 + 1 \big)^2 \|\nabla \phi\|_4^2 + \|\mathbf{u}\|_4^4 + \|\mathbf{u}\|_6^4 + \|\phi\|_6^4 + \|\nabla \mathbf{u}\|_2^2  + \|\mathbf{u}\|_4 \|\nabla \phi\|_4 \|\phi\|_6 + \|\nabla \mu\|_2^2 + \|\nabla \phi\|_4^2\\& + \|\phi\|_\infty^4
+ \|\nabla \phi\|_4^2 + \|\mathbf{u}\|_6^2 + \|\mathbf{u}\|_2^2 + \|\nabla \mathbf{u}\|_2^2 + \|\nabla \mathbf{u}\|_2^{\frac{7}{2}} \\
&+ \|\mathbf{u}\|_4^2 \big( \|\nabla \phi\|_4^2 + \|\nabla^2 \phi\|_4^2 + \|\mu\|_4^2 \big)
+ \|\nabla^2 \phi\|_4^2 \|\mu\|_4^2 + \|\nabla \phi\|_8^2 \|\phi\|_8^2
+ \big( \|\phi\|_\infty^3 + \|\phi\|_\infty\big)^2 \Bigr)\\
&\times\Big( \|\mathbf{u}\|_{\infty}^2 + \|\mathbf{u}\|_{\infty}^4 + \|\mu\|_{\infty}^2 + \|\phi_t\|_2^2 + \| \phi_t\|_4^2 + \| \phi_t\|_6^2 + \|\phi_t\|_2 + \|\mathbf{u}_t\|_2^2 + \|\nabla \phi_t\|_2^2 \\
&+  \|\nabla^2 \mathbf{u}\|_2^2 + \|\nabla \mathbf{u}\|_3^2 + \|\nabla \mu\|_4^2 + \|\nabla \mu\|_6^2 + \|\nabla^2 \mu\|_6^2 + \|\nabla^2 \mu\|_4^2 + \|\nabla \mu\|_{w^{1,6}}^2 \Big)
\end{align*}
is $ L^1_{loc}(\mathbb{R}^+) $ only depends on, \( \rho^* \), \( \rho_* \), $T$ and the initial value.
\end{lemma}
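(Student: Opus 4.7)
The plan is to differentiate the momentum equation, the phase-field equation, and the chemical-potential equation in \eqref{NSCH} with respect to $t$, pair each identity against a test function carrying the time weight $t$, and close the resulting inequality by Gr\"onwall's lemma. Introducing the weight $t$ is essential because the hypotheses of Theorem \ref{1.1} do not guarantee that $\mathbf{u}_t(0)$, $\phi_t(0)$, or $\mu_t(0)$ lie in $L^2$; with the weight the $t=0$ contribution drops out and no additional regularity on the initial data is needed. I denote the left-hand side of \eqref{3D} by $E(t)$, and the goal is to produce a differential inequality of the form $\frac{d}{dt}E(t)\le h_1(t)(1+E(t))$, whose integration with $E(0)=0$ yields the stated bound.

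Time-differentiating the momentum equation produces
\begin{align*}
(\rho\mathbf{u}_t)_t + (\rho\mathbf{u}\cdot\nabla\mathbf{u})_t - \dive\bigl((\nu(\phi)D\mathbf{u})_t\bigr) + \nabla P_t = -\dive\bigl((\nabla\phi\otimes\nabla\phi)_t\bigr),
\end{align*}
which I test against $t\mathbf{u}_t$. The identity $\rho_t + \dive(\rho\mathbf{u})=0$ (from the continuity equation together with $\dive\mathbf{u}=0$) collapses the leading time-derivative piece into
\[
\tfrac{1}{2}\tfrac{d}{dt}\bigl(t\|\sqrt{\rho}\,\mathbf{u}_t\|_2^2\bigr) - \tfrac{1}{2}\|\sqrt{\rho}\,\mathbf{u}_t\|_2^2 + t\!\int\!\rho\,\mathbf{u}\cdot\nabla\mathbf{u}_t\cdot\mathbf{u}_t\,dx,
\]
and the viscous term delivers $\int t\nu(\phi)|D\mathbf{u}_t|^2\,dx \ge \tfrac{\nu_*}{2}t\|\nabla\mathbf{u}_t\|_2^2$ via Korn's inequality \eqref{korn}, plus a cross piece $\int t\nu'(\phi)\phi_t\,D\mathbf{u}:D\mathbf{u}_t\,dx$. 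The analogous procedure on the phase-field equation, tested against $t\phi_t$, yields the bound on $\|\sqrt{\rho t}\,\phi_t\|_2^2$ and the coupling $-\int t\nabla\mu_t\cdot\nabla\phi_t\,dx$, while testing against $t\mu_t$ produces the dissipation $\int_0^t\tau\|\nabla\mu_t\|_2^2\,d\tau$. Differentiating the constitutive relation $\rho\mu = -\Delta\phi + \rho\Phi'(\phi)$ in time and pairing with $t\mu_t$ gives the $\int_0^t\tau\|\sqrt{\rho}\,\mu_t\|_2^2\,d\tau$ bound, while the $\|\sqrt{t}\,\nabla\phi_t\|_2^2$ bound follows from the elliptic identity $-\Delta\phi_t = \rho\mu_t + \rho_t\mu - (\rho\Phi'(\phi))_t$ paired with $t\phi_t$ under the Neumann condition on $\phi$.

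The main obstacle is controlling every nonlinear coupling without any regularity on $\nabla\rho$. Each occurrence of $\rho_t$ is rewritten as $-\dive(\rho\mathbf{u})$ and then integrated by parts, so that only the smoother quantities of the type $\rho\mathbf{u}\cdot\nabla(\cdot)$ remain. The cubic Landau factor $(\Phi'(\phi))_t=(3\phi^2-1)\phi_t$ and the capillary term $\dive(\nabla\phi\otimes\nabla\phi)_t$ generate the mixed products appearing in $h_1$; these are estimated by H\"older's inequality together with the Gagliardo-Nirenberg inequalities \eqref{LADY3}--\eqref{DL4} and the 3D Sobolev embeddings $H^1\hookrightarrow L^6$ and $H^2\hookrightarrow L^\infty$. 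After absorbing the three dissipation terms $\tfrac{\nu_*}{2}t\|\nabla\mathbf{u}_t\|_2^2 + t\|\nabla\mu_t\|_2^2 + t\|\sqrt{\rho}\,\mu_t\|_2^2$ into the left via Young's inequality with a small parameter, the surviving products factor as a norm depending only on the solution at time $t$ (absorbed into the prefactor of $h_1$) times a factor involving time derivatives (belonging to $1+E(t)$). Gr\"onwall's inequality applied to $1+E(t)$ then gives $E(t)\le\exp\bigl(\int_0^t h_1(\tau)\,d\tau\bigr)-1$, and the local integrability $h_1\in L^1_{loc}(\mathbb{R}^+)$ follows directly from the regularity recorded in Theorem \ref{1.1}(iii).
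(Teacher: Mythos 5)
Your overall architecture is the same as the paper's: differentiate the momentum equation, the $\phi$-equation and the constitutive relation in time, weight by $t$, test with the weighted time derivatives, rewrite every occurrence of $\rho_t$ as $-\mathrm{div}(\rho\mathbf{u})$ and integrate by parts, absorb the three dissipation terms by Young's inequality, and close with Gr\"onwall, with $h_1\in L^1_{loc}$ coming from Theorem \ref{1.1}. However, there is one step that fails as written: your claim that the pointwise-in-time bound on $\|\sqrt{t}\,\nabla\phi_t\|_2^2$ follows from pairing the static identity $-\Delta\phi_t=\rho\mu_t+\rho_t\mu-(\rho\Phi'(\phi))_t$ with $t\phi_t$. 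That pairing gives $t\|\nabla\phi_t\|_2^2\le \|\sqrt{\rho t}\,\mu_t\|_2\,\|\sqrt{\rho t}\,\phi_t\|_2+\cdots$, and $\|\sqrt{\rho t}\,\mu_t\|_2$ is \emph{not} controlled pointwise in time by your energy $E(t)$ — the scheme only yields $\int_0^t\tau\rho\|\mu_\tau\|_2^2\,d\tau$, i.e.\ square-integrability in time — so no $L^\infty_t$ bound for $\sqrt{t}\,\nabla\phi_t$ can be extracted this way, and the quantity would not legitimately sit on the left-hand side of \eqref{3D}.

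The correct mechanism, which is the paper's, is dynamical rather than elliptic: test the time-differentiated equation \eqref{NSCH}$_4$ against $\sqrt{t}\mu_t$ and substitute the time-differentiated constitutive relation \eqref{NSCH}$_5$ solved for $\rho\mu_t$; the $-\Delta\phi_t$ contribution then appears paired with $(\sqrt{t}\phi_t)_t$ and produces, after integration by parts with the Neumann condition, the genuine time derivative $\tfrac12\tfrac{d}{dt}\|\nabla(\sqrt{t}\phi_t)\|_2^2$ (this is \eqref{eq-32}), while the cancellation of the $\pm\int t\nabla\mu_t\cdot\nabla\phi_t$ cross terms you need occurs in the separate pairing against $\sqrt{t}\phi_t$ and $\sqrt{t}\mu_t$ (this is \eqref{eq-33}). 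The same substitution is also the source of the term $-3\int(\sqrt{t}\phi_t)_t\sqrt{t}\rho\phi^2\phi_t\,dx$, whose time-derivative part is exactly what places $\|\sqrt{\rho t}\,\phi\phi_t\|_2^2$ on the left-hand side of \eqref{3D} — a quantity your sketch never accounts for. So the missing idea is precisely this combination (differentiated $\phi$-equation tested with $\sqrt{t}\mu_t$ plus substitution of the differentiated constitutive law), without which two of the four pointwise quantities in \eqref{3D} are not obtained; the remainder of your argument coincides with the paper's.
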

\begin{proof}
Differentiating \eqref{NSCH}$_2$ with respect to $t$, respectively, multiplying by $\sqrt t$, and taking the inner product with $\sqrt t \mathbf{u}_t$, we have
\begin{align*}
\frac{1}{2}\frac{d}{dt}&\int_\Omega\rho t|\mathbf{u}_t|^{2}dx-\int_\Omega\frac{d}{dt}\bigl(\Dv(\nu(\phi)D\mathbf{u}\bigr)\sqrt t\cdot\sqrt t \mathbf{u}_tdx+\int_\Omega\nabla P_t\cdot t\mathbf{u}_tdx\\
=&-\int_\Omega\frac{d}{dt}\bigl(\Dv\nabla\phi\otimes\nabla\phi\bigr)\sqrt t\cdot \sqrt t \mathbf{u}_tdx+\frac{1}{2}\int_\Omega\rho |\mathbf{u}_t|^{2}dx-\frac{1}{2}\int t\rho_t|\mathbf{u}_t|^{2}dx\\
&-\int_\Omega\sqrt t\rho_t \mathbf{u}\cdot\nabla \mathbf{u}\cdot\sqrt t \mathbf{u}_tdx-\int_\Omega\sqrt\rho\mathbf{u}_t\cdot\nabla \mathbf{u}\cdot \sqrt t \mathbf{u}_tdx-\int_\Omega\sqrt t \rho \mathbf{u}\cdot\nabla \mathbf{u}_t\cdot\sqrt t \mathbf{u}_tdx.
\end{align*}
On the other hand,
\begin{align*}
-\int_\Omega\frac{d}{dt}\Dv\bigl(\nu(\phi)D\mathbf{u}\bigr)\sqrt t\cdot\sqrt t \mathbf{u}_tdx
&=\int_\Omega Dv\bigl(\nu'(\phi)\phi_tDu\bigr)\cdot t \mathbf{u}_tdx+\int_\Omega \Dv\bigl(\nu(\phi )D\mathbf{u}_t\bigr)\cdot t \mathbf{u}_tdx\\
&=\int_\Omega \nu'(\phi)\phi_tD\mathbf{u}\cdot \nabla(t \mathbf{u}_t)dx+\int_\Omega \nu(\phi )D\mathbf{u}_t\cdot \nabla (t \mathbf{u}_t)dx,
\end{align*}
which together with  \eqref{NSCH}$_{5}$,  yields that
\begin{align*}
-&\int_\Omega\frac{d}{dt}(\Dv\nabla\phi\otimes\nabla\phi)\sqrt t\cdot \sqrt t \mathbf{u}_tdx\\&=\int_\Omega\frac{d}{dt}\bigl(\rho\mu\cdot\nabla\phi-\rho\nabla\Phi(\phi)
-\nabla(\frac{1}{2}|\nabla\phi|^2)\bigr)\sqrt t\cdot\sqrt t \mathbf{u}_tdx\\
&=\int_\Omega\rho_t\mu\cdot\nabla\phi\cdot t \mathbf{u}_tdx+\int_\Omega t\rho\mu_t\cdot\nabla\phi\cdot \mathbf{u}_tdx+
\int_\Omega t\rho\mu\cdot\nabla \phi_t\cdot\mathbf{u}_tdx\\
&\quad-\int_\Omega t\rho_t\Phi^{'}(\phi)\cdot\nabla\phi \cdot\mathbf{u}_tdx
- \int_\Omega t \rho{\Phi}''(\phi) \phi_t\cdot \nabla \phi\cdot \mathbf{u}_tdx- \int_{\Omega} t \rho \Phi'(\phi)\cdot \nabla \phi_t\cdot \mathbf{u}_tdx.
\end{align*}
Observing that
\begin{align*}
-&\int_{\Omega} \frac{d}{dt} \left( \nabla  \frac{1}{2} |\nabla \phi|^2\right)\cdot  t \mathbf{u}_tdx
\\&= -\int_{\Omega} \frac{d}{dt} \left( \nabla  \frac{1}{2} |\nabla \phi|^2 t \cdot \mathbf{u}_t\right) dx
+\int_{\Omega}\nabla( \frac{1}{2} |\nabla \phi|^2) \cdot\mathbf{u}_tdx
+\int_{\Omega} \nabla( \frac{1}{2} |\nabla \phi|^2 ) \cdot t\mathbf{u}_{tt}dx \\
&= -\frac{d}{dt}  \int_{\Omega}\nabla  (\frac{1}{2} |\nabla \phi|^2 )\cdot t \mathbf{u}_tdx
- \int_{\Omega}  ( \frac{1}{2} |\nabla \phi|^2 )\cdot\text{div} \, \mathbf{u}_tdx
- \int_{\Omega}   \frac{1}{2} |\nabla \phi|^2 \cdot\text{div} \, t \mathbf{u}_{tt}dx  \\
&=-\frac{d}{dt} \int_{\Omega}  \frac{1}{2} |\nabla \phi|^2 \cdot\text{div} \, t \mathbf{u}_t dx= 0,
\end{align*}
and employing  Korn's inequality \eqref{korn}, we conclude that
\begin{align}\notag
&\frac{1}{2} \frac{d}{dt} \int_{\Omega} t\rho |\mathbf{u}_t|^2 dx + \frac{1}{2} \int_{\Omega} \nu(\phi) t|\nabla \mathbf{u}_t|^2 dx\label{eq-31} \\ \notag
&\leq \frac{1}{2} \int_{\Omega} \rho |\mathbf{u}_t|^2dx - \frac{1}{2} \int_{\Omega} t\rho_t \, |\mathbf{u}_t|^2dx - \int_{\Omega} \left( \sqrt{t} \rho_t \mathbf{u} \cdot \nabla \mathbf{u} \right)\cdot \left( \sqrt{t} \mathbf{u}_t \right) dx- \int_{\Omega} \left( \sqrt{t} \rho \mathbf{u}_t \cdot \nabla \mathbf{u} \right) \cdot\left( \sqrt{t} \mathbf{u}_t \right) dx \\
&\quad- \int_{\Omega} \sqrt{t} \rho \mathbf{u} \cdot \nabla \mathbf{u}_t \cdot\left( \sqrt{t} \mathbf{u}_t \right) dx - \int_{\Omega}\nu'(\phi) \, \phi_t D\mathbf{u} \cdot \nabla t \mathbf{u}_tdx + \int_{\Omega} \left( \sqrt{t} \rho_t \mu \nabla \phi \right) \cdot\left( \sqrt{t} \mathbf{u}_t \right) dx \\ \notag
&\quad+ \int_{\Omega} \left( \sqrt{t} \rho \mu_t \nabla \phi \right)\cdot \left( \sqrt{t} \mathbf{u}_t \right) dx + \int_{\Omega} \left( \sqrt{t} \rho \mu \nabla \phi_t \right) \cdot\left( \sqrt{t} \mathbf{u}_t \right) dx - \int_{\Omega} \sqrt{t} \rho_t \Phi'(\phi) \nabla \phi\cdot \left( \sqrt{t} \mathbf{u}_t \right) dx \\
&\quad- \int_{\Omega} \sqrt{t}  \rho \Phi''(\phi) \, \phi_t \nabla \phi\cdot \left( \sqrt{t}\mathbf{u}_t \right) dx - \int_{\Omega} \left( \sqrt{t}  \rho \Phi'(\phi) \nabla \phi_t \right)\cdot \left( \sqrt{t} \mathbf{u}_t \right)dx \notag
\\&\triangleq\sum_{i=1}^{12} I_i \notag.
\end{align}
Differentiating \eqref{NSCH}$_4$ with respect to $t$, respectively, multiplying by $\sqrt t$, and taking the inner product with $\sqrt t \mu_t$, yield that
\begin{align*}-\int_{\Omega} |\nabla \sqrt{t} \mu_t|^2 \, dx&=\int_{\Omega} (\sqrt{t} \rho_t \phi_t)(\sqrt{t} \mu_t) \, dx + \int_{\Omega} \rho (\sqrt{t} \phi_t)_t \sqrt{t} \mu_t \, dx - \frac{1}{2} \int_{\Omega} \rho \phi_t \mu_t \, dx \\
&\quad+ \int_{\Omega} (\sqrt{t} \rho_t \mathbf{u}\cdot \nabla\phi)(\sqrt{t} \mu_t) \, dx+ \int_{\Omega} (\sqrt{t} \rho \mathbf{u}_t\cdot \nabla\phi)(\sqrt{t} \mu_t) \, dx \\
&\quad+ \int_{\Omega} (\sqrt{t} \rho \mathbf{u}\cdot \nabla\phi_t)(\sqrt{t} \mu_t) \, dx.
\end{align*}
By  virtue of \eqref{NSCH}$_5$,  we obtain
\begin{align*}
\int_{\Omega} \rho (\sqrt{t} \phi_t)_t \sqrt{t} \mu_t \, dx &= \int_{\Omega} (\sqrt{t} \phi_t)_t \sqrt{t} \left(-\Delta \phi_t + \rho_t (\phi^3 - \phi) + 3\rho \phi^2 \phi_t - \rho \phi_t - \rho_t \mu \right) dx \\
&= \frac{1}{2} \frac{d}{dt} \int_{\Omega} |\nabla \sqrt{t} \phi_t|^2 dx + \int_{\Omega} (\sqrt{t} \phi_t)_t \sqrt{t} \rho_t \phi^3 dx - \int_{\Omega} (\sqrt{t} \phi_t)_t \sqrt{t} \rho_t \phi dx \\
&\quad+ 3 \int_{\Omega} (\sqrt{t} \phi_t)_t \sqrt{t} \rho \phi^2 \phi_t \, dx - \int_{\Omega} (\sqrt{t} \phi_t)_t \sqrt{t} \rho \phi_t \, dx - \int_{\Omega} (\sqrt{t} \phi_t)_t \sqrt{t} \rho_t \mu dx.
\end{align*}
Then we have
\begin{equation}\label{eq-32}
\begin{split}
&\int_{\Omega} |\nabla \sqrt{t} \mu_t|^2 dx + \frac{1}{2} \frac{d}{dt} \int_{\Omega} |\nabla \sqrt{t} \phi_t|^2 dx\\& = -\int_{\Omega} \sqrt{t} \rho_t \phi_t \sqrt{t} \mu_t dx - \int_{\Omega} (\sqrt{t} \phi_t)_t \sqrt{t} \rho_t \phi^3 dx + \int_{\Omega} (\sqrt{t} \phi_t)_t \sqrt{t} \rho_t \phi \, dx\\
 &\quad- 3\int_{\Omega} (\sqrt{t} \phi_t)_t \sqrt{t} \rho \phi^2 \phi_t dx + \int_{\Omega}
 (\sqrt{t} \phi_t)_t \sqrt{t} \rho \phi_t dx + \int_{\Omega} (\sqrt{t} \phi_t)_t \sqrt{t} \rho_t \mu dx + \frac{1}{2} \int_{\Omega} \rho\phi_t \mu_t dx \\
&\quad- \int_{\Omega} (\sqrt{t} \rho_t \mathbf{u}\cdot \nabla\phi)(\sqrt{t} \mu_t) dx
- \int_{\Omega} (\sqrt{t} \rho \mathbf{u}_t \cdot\nabla\phi)(\sqrt{t} \mu_t) dx + \int_{\Omega} (\sqrt{t} \rho \mathbf{u}\cdot \nabla\phi_t)(\sqrt{t} \mu_t) \\&\triangleq\sum_{i=1}^{10} K_i.
\end{split}
\end{equation}
Furthermore, differentiating \eqref{NSCH}$_4$, \eqref{NSCH})$_5$ with respect to $t$, respectively, multiplying by $\sqrt t$, then taking the inner product with $\sqrt t \phi_t$, $\sqrt t \mu_t$ and summing them together,  we conclude that
\begin{equation}\label{eq-33}
\begin{split}
&\frac{1}{2} \frac{d}{dt} \int_{\Omega}  \rho t|\phi_t|^2 dx + \int_{\Omega}t\rho |\mu_t|^2 dx \leq \frac{1}{2} \int_{\Omega} \rho|\phi_t|^2 dx-\frac{1}{2} \int_{\Omega} t\rho_t|\phi_t|^2 dx - \int_{\Omega} (\sqrt{t} \rho_t \mathbf{u}\cdot \nabla\phi)(\sqrt{t} \phi_t) dx \\
&\quad- \int_{\Omega} (\sqrt{t} \rho \mathbf{u}_t\cdot \nabla\phi)(\sqrt{t} \phi_t) dx - \int_{\Omega} (\sqrt{t} \rho \mathbf{u}\cdot \nabla\phi_t)(\sqrt{t} \phi_t) dx - \int_{\Omega} (\sqrt{t} \rho_t \mu)(\sqrt{t} \mu_t) dx \\
&\quad+ \int_{\Omega} \bigl(\sqrt{t} \rho_t (\phi^3 - \phi)\bigr)(\sqrt{t} \mu_t) dx + \int_{\Omega} \sqrt{t} \rho (3\phi^2 \phi_t - \phi_t)(\sqrt{t} \mu_t)
\\&\triangleq\sum_{i=1}^{8} L_i.
\end{split}
\end{equation}
In what follows, let us bound these terms in the right hand sides of  \eqref{eq-32}-\eqref{eq-33}.  For \( I_2 \), thanks to \( \rho_t = - \mathbf{u} \cdot \nabla \rho \), we have
\begin{equation}
\begin{split}
I_2 &\leq C \Big| \int_{\Omega} t \mathrm{div}(\rho \mathbf{u}) |\mathbf{u}_t|^2 dx \Big|\leq C \int_{\Omega} t \rho |\mathbf{u}| |\nabla \mathbf{u}_t| |u_t| dx  \\
&\leq C\Big( \int_{\Omega} \rho t |\mathbf{u}_t|^2 dx \Big)^{\frac{1}{2}} \Big( \int_{\Omega} t \rho |\mathbf{u}|^2 |\nabla \mathbf{u}_t|^2 dx \Big)^{\frac{1}{2}} \\
&\leq C_\rho*  \| \sqrt{t \rho} \mathbf{u}_t \|_2 \| \mathbf{u} \|_{\infty} \| \sqrt{t} \nabla \mathbf{u}_t \|_2  \\
&\leq \varepsilon \| \sqrt{t} \nabla \mathbf{u}_t \|_2^2 + C_\rho*   \| \mathbf{u} \|_{\infty}^2\| \sqrt{t \rho} \mathbf{u}_t \|_2^2 .
\end{split}
\end{equation}
For \( I_3 \), according to \( \rho_t = - \mathbf{u} \cdot \nabla \rho \) and then performing an integration by parts, we get
\begin{equation}\label{3.6666}
\begin{split}
I_3 &\leq \Big| - \int_{\Omega} (\sqrt{t} \rho_t \mathbf{u} \cdot \nabla \mathbf{u}) \cdot (\sqrt{t} \mathbf{u}_t) dx \Big|\\
&\leq \Big| - \int_{\Omega} t \rho \mathbf{u} \cdot \nabla \big[ (\mathbf{u} \cdot \nabla) \mathbf{u} \cdot \mathbf{u}_t \big] dx \Big| \\
&\leq \int_{\Omega} t \rho |\mathbf{u}| \big( |\nabla \mathbf{u}|^2 |\mathbf{u}_t| + |\mathbf{u}| |\nabla^2 \mathbf{u}| |\mathbf{u}_t| + |\mathbf{u}| |\nabla \mathbf{u}| |\nabla \mathbf{u}_t|)dx  \\
&\triangleq \sum_{i=1}^6 I_{3i}.
\end{split}
\end{equation}
Now we deal with the terms $I_{3i}$$(i=1,2,3,4,5,6)$ in the above inequality \eqref{3.6666}.  For $I_{31}$, it follows from H\"{o}lder's and Young's inequalities and $\dot{H}^1\hookrightarrow L^6(\Omega)$ that
\begin{align*}
I_{31} &\leq \sqrt{\rho^* T} \| \sqrt{\rho t} \mathbf{u}_t \|_4 \| \mathbf{u} \|_6 \| \nabla \mathbf{u} \|_{24/7}^2 \\
&\leq \sqrt{\rho^* T} \| \sqrt{\rho t} \mathbf{u}_t \|_2^{1/4} \| \sqrt{\rho t} \mathbf{u}_t \|_6^{3/4} \| u \|_6 \| \nabla \mathbf{u} \|_{24/7}^2 \\
&\leq \varepsilon \| \nabla \sqrt{t} u_t \|_2^2 + C_{T,\rho^*} \| \sqrt{\rho t} \mathbf{u}_t \|_2^{2/5} \| \nabla \mathbf{u} \|_{24/7}^{16/5} \| \nabla \mathbf{u} \|_2^{8/5}.
\end{align*}
Due to
\[
\| \nabla \mathbf{u} \|_{24/7}^{16/5} \leq C \| \nabla \mathbf{u} \|_2^{6/5} \| \nabla^2 \mathbf{u} \|_2^2,
\]
thus
\begin{align*}
I_{31} &\leq \varepsilon \| \nabla \sqrt{t} \mathbf{u}_t \|_2^2 + C_{T,\rho^*} \| \sqrt{\rho t} \mathbf{u}_t \|_2^{2/5} \| \nabla \mathbf{u} \|_2^{14/5} \| \nabla^2 \mathbf{u} \|_2^2 \\
&\leq \varepsilon \| \nabla \sqrt{t} \mathbf{u}_t \|_2^2 + C_{T,\rho^*} \left( \| \sqrt{\rho t} \mathbf{u}_t \|_2^2 + \| \nabla \mathbf{u}\|_2^{7/2} \right) \| \nabla^2\mathbf{u} \|_2^2.
\end{align*}
Similarly, we also  get
\begin{align*}
I_{32} &= \int_{\Omega} t \rho |\mathbf{u}|^2 | \nabla^2 \mathbf{u} | |\mathbf{u}_t| dx \leq  C _{T \rho^{\ast} } \| \nabla^2\mathbf{u} \|_2^2 + C\| \mathbf{u} \|_{\infty}^4 \| \sqrt{\rho t} \mathbf{u}_t \|_2^2,\\
I_{33} &= \int_{\Omega} t \rho |\mathbf{u}|^2 \|\nabla \mathbf{u}\| \|\nabla \mathbf{u}_t\| dx \\
&\leq \varepsilon \int_{\Omega} |\nabla \sqrt{t} \mathbf{u}_t|^2 dx + C \int_{\Omega} t \rho^2 |\mathbf{u}|^4 \|\nabla \mathbf{u}\|^2 dx \\
&\leq \varepsilon |\nabla \sqrt{t} \|\mathbf{u}_t\|_2^2  + C_{T\rho*} \| \mathbf{u} \|_{\infty}^4 \|\nabla \mathbf{u}\|_2^2.
\end{align*}
For $I_4$ and $I_5$, we conclude from  $\dot{H}^1\hookrightarrow L^6(\Omega)$, that
\begin{align}
I_4 &\leq \left| -\int _{\Omega}\sqrt{t} \rho \mathbf{u}_t \cdot \nabla \mathbf{u} \cdot \sqrt{t} \mathbf{u}_t dx\right| \leq C_{\rho*}\| \nabla \mathbf{u} \|_3 \| \sqrt{\rho t} \mathbf{u}_t \|_2 \| \sqrt{t} \mathbf{u}_t \|_6\\ \notag
&\leq C_{\rho*} \| \nabla \mathbf{u} \|_3 \| \sqrt{\rho t} \mathbf{u}_t \|_2 \| \sqrt{t} \nabla \mathbf{u}_t \|_2 \leq \varepsilon \| \sqrt{t} \nabla \mathbf{u}_t \|_2^2 + C_{\rho*} \| \nabla \mathbf{u} \|_3^2 \| \sqrt{\rho t} \mathbf{u}_t \|_2^2 ,\\
I_5 &\leq \left| -\int _{\Omega}\sqrt{t} \rho \mathbf{u} \cdot \nabla \mathbf{u}_t \cdot \sqrt{t} \mathbf{u}_t dx\right| \leq C_{\rho*} \| \sqrt{\rho t} \mathbf{u}_t \|_2 \| \mathbf{u} \|_{\infty} \| \sqrt{t} \nabla \mathbf{u}_t \|_2 \\ \notag
&\leq \varepsilon \| \sqrt{t} \nabla \mathbf{u}_t \|_2^2 + C_{\rho*} \| \mathbf{u}\|_{\infty}^2 \| \sqrt{\rho t} \mathbf{u}_t \|_2^2 .
\end{align}
For $I_6$, due to $\nu = \nu(s) \in W^{1,\infty}(\mathbb{R})$, $\dot{H}^1\hookrightarrow L^6(\Omega)$, we get
\begin{align}
I_6 &\leq \left|- \int _{\Omega}\nu'(\phi) \phi_t D \mathbf{u} \cdot \nabla t\mathbf{u}_t dx \right| \leq C \| \sqrt{t} \nabla \mathbf{u}_t \|_2 \|\sqrt t \phi_t \|_6 \| D \mathbf{u} \|_3\\ \notag
&\leq \varepsilon \| \sqrt{t} \nabla \mathbf{u}_t \|_2^2 + C\| \nabla \mathbf{u} \|_3^2 \|\nabla\sqrt t\phi_t \|_2^2.
\end{align}
For \( I_7\), according to \( \rho_t = - \mathbf{u}\cdot \nabla \rho \) and then performing an integration by parts, we get
\begin{equation}
\begin{split}\label{3.1000}
I_7 &\leq \left| -\int_{\Omega} (\sqrt{t} \, \text{div} \rho \mathbf{u}) \cdot(\mu \nabla\phi\cdot \sqrt{t} \, \mathbf{u}_t) \, dx \right| \leq\left| \int_{\Omega} t \, \rho \mathbf{u} \cdot \nabla (\mu \nabla\phi\cdot u_t) \, dx \right| \\
&\leq \left| \int_{\Omega} t \, \rho \mathbf{u} \cdot \nabla \mu \cdot\nabla \phi\cdot \mathbf{u}_t \, dx \right| + \left| \int_{\Omega} t \, \rho \mathbf{u} \,\cdot \mu \nabla^2 \phi \cdot \mathbf{u}_t \, dx \right| + \left| \int_{\Omega} t \, \rho \mathbf{u} \,\cdot \mu \nabla \phi \cdot \nabla \mathbf{u}_t \, dx \right|
\\&\triangleq \sum_{i=1}^3 I_{7i}.
\end{split}
\end{equation}
We deal with  the terms $I_{7i}$$(i=1,2,3)$ in the above inequality \eqref{3.1000}. It follows from H\"{o}lder's and Young's inequalities that
\begin{align*}
I_{71} &\leq C_{T} \rho * \, \|\sqrt{\rho t} \, \mathbf{u}_t\|_2 \, \|\mathbf{u}\|_{\infty} \, \|\nabla \mu\|_4 \, \|\nabla \phi\|_4 \leq  C_{T} \rho *\|\mathbf{u}\|_{\infty}^2 \, \|\sqrt{\rho t} \, \mathbf{u}_t\|_2^2 + C\|\nabla \mu\|_4^2 \, \|\nabla \phi\|_4^2, \\
I_{72} &\leq C_{T} \rho * \, \|\sqrt{\rho t} \, \mathbf{u}_t\|_2 \, \|\mathbf{u}\|_{\infty} \, \|\nabla^2 \phi\|_4 \, \| \mu\|_4 \leq  C_{T} \rho *\|\mathbf{u}\|_{\infty}^2 \, \|\sqrt{\rho t} \, \mathbf{u}_t\|_2^2 + C\|\nabla^2 \phi\|_4^2 \, \| \mu\|_4^2, \\
I_{73} &\leq C_{T} \rho *  \, \|\nabla \sqrt t\mathbf{u}_t\|_2 \, \|\mathbf{u}\|_{\infty} \, \|\mu\|_4 \, \|\nabla \phi\|_4 \leq \varepsilon \|\nabla\, \sqrt t \mathbf{u}_t\|_2^2 + C_{\rho*T} \|\mathbf{u}\|_{\infty}^2 \, \|\mu\|_4^2 \, \|\nabla \phi\|_4^2.
\end{align*}
For \( I_8\) and \( I_9\),  according to H\"{o}lder's and Young's inequalities,  we have
\begin{align}
I_8 &\leq \left| \int_{\Omega} ( \sqrt{t}\rho \, \mu_t \, \nabla \phi) \cdot\, (\sqrt{t} \, \mathbf{u}_t) \, dx \right|\notag
\\& \leq C_{\rho*} \, \|\sqrt{\rho t} \, \mu_t\|_2 \, \|\nabla \, \phi\|_4 \, \|\sqrt{t}\mathbf{u}_t\|_4\\ \notag
&\leq C_{\rho*T} \, \|\sqrt{\rho t} \, \mu_t\|_2 \, \|\nabla \, \phi\|_4 \, \|\nabla\sqrt{t}\mathbf{u}_t\|_2^\frac{3}{4}\|\mathbf{u}_t\|_2^\frac{1}{4}\notag\\
 &\leq \varepsilon\|\nabla \sqrt{t} \, \mathbf{u}_t\|_2^2 + C_{\rho*T} \, \|\nabla \phi\|_4^8 \, \|\mathbf{u}_t\|_2^2+\varepsilon\|\sqrt{\rho t} \, \mu_t\|_2^2 ,\notag\\
I_9 &\leq \left| \int_{\Omega} (\sqrt{t} \, \rho \, \mu \, \nabla \phi_t) \,\cdot (\sqrt{t} \, \mathbf{u}_t) \, dx \right| \notag\\&\leq C_{\rho*} \, \|\sqrt{\rho t} \, \mathbf{u}_t\|_2 \, \|\nabla \sqrt{t} \, \phi_t\|_2 \, \|\mu\|_{\infty} \\ \notag
&\leq C_{\rho*} \, \|\sqrt{\rho t} \, \mathbf{u}_t\|_2^2 + C\|\mu\|_{\infty}^2 \, \|\nabla \sqrt{t} \, \phi_t\|_2^2 .
\end{align}
For \( I_{10} \), thanks to \( \rho_t = - \mathbf{u} \cdot \nabla \rho \), we deduce that
\begin{equation}
\begin{split}\label{3.1444}
I_{10} &\leq \left| \int_{\Omega} \sqrt{t} \, \text{div} \, (\rho \, \mathbf{u}) \cdot\,\bigl((\phi^3 - \phi) \, \nabla \phi \,\cdot \sqrt{t} \, \mathbf{u}_t\bigr) \, dx \right|
\leq\left| \int_{\Omega} \rho \, \mathbf{u} \cdot \nabla \bigl(t(\phi^3 - \phi) \, \cdot \nabla \phi \, \mathbf{u}_t \bigr) \, dx \right|\\
&\leq \left| 3\int_{\Omega} \rho \, \mathbf{u}\cdot \left| \nabla \phi \right|^2|\phi|^2\cdot \, t \, \mathbf{u}_t dx \right| + \left| \int_{\Omega} \rho \, \mathbf{u} \cdot\left| \nabla \phi \right|^2 \,\cdot t \, \mathbf{u}_t dx \right| \\
&\quad + \left| \int_{\Omega} \rho \, \mathbf{u} \,\cdot (\phi^3 - \phi) \, \nabla^2 \phi \, \cdot t \mathbf{u}_t dx \right| + \left| \int_{\Omega} \rho \, \mathbf{u} \, \cdot (\phi^3 - \phi) \, \nabla \phi \, \cdot\nabla t\, \mathbf{u}_t dx \right|
\\&\triangleq \sum_{i = 1}^{4} I_{10(i)}.
\end{split}
\end{equation}
Next, we turn to the estimates of $I_{10(i)}(i=1,2,3,4)$ in the above inequality \eqref{3.1444}. Thanks to H\"{o}lder's and Young's inequalities, we have
\begin{equation*}
\begin{split}
I_{10(1)} &\leq C_{\rho*} \, \|\sqrt{\rho t} \, \mathbf{u}_t\|_2 \, \|\mathbf{u}\|_{\infty} \, \| \left| \nabla \phi \right|^2 \|_4\| \left|\phi \right|^2 \|_4\\& \leq C_{\rho*T} \|\mathbf{u}\|_{\infty}^2 \, \|\sqrt{\rho t} \, \mathbf{u}_t\|_2^2 + C \, \|\nabla \phi\|_8^2\|\phi\|_8^2,\\
I_{10(2)} &\leq C_{\rho*T} \|\sqrt{\rho_t} \mathbf{u}_t\|_{2} \|\mathbf{u}\|_{\infty} \| \nabla\phi \|_{4}^2 \leq C_{\rho*T} \|\mathbf{u}\|_{\infty}^2 \|\sqrt{\rho t} \mathbf{u}_t\|_{2}^2 + C \|\nabla \phi \|_{4}^4 ,\\
I_{10(3)} &\leq C_{\rho*T} \|\sqrt{\rho t} \mathbf{u}_t\|_{2} \|\nabla^2 \phi\|_{4} \left( \||\phi|^3\|_{4} + \|\phi\|_{4} \right) \|\mathbf{u}\|_{\infty}\\& \leq C_{\rho*T}\|\mathbf{u}\|_{\infty}^2 \|\sqrt{\rho t} \mathbf{u}_t\|_{2}^2 + C \|\nabla^2 \phi\|_{4}^2 \left( \|\phi\|_{12}^3 + \|\phi\|_{4} \right)^2,\\
I_{10(4)} &\leq C_{\rho*T} \|\nabla\sqrt{t} \mathbf{u}_t\|_{2} \|u\|_{\infty} \left( \||\phi|^3\|_{4} + \|\nabla\phi\|_{4} \right) \|\nabla\phi\|_{4} \\ &\leq \varepsilon\|\nabla\sqrt{t} \mathbf{u}_t\|_{2}^2 + C_{\rho*T} \|\mathbf{u}\|_{\infty}^2 \left( \|\phi\|_{12}^3 + \|\phi\|_{4} \right)^2 \|\nabla\phi\|_{4}^2 .
\end{split}
\end{equation*}
For  $I_{11}$  and $I_{12}$, according to $\Phi_0(s) = \frac{1}{4}(s^2 - 1)^2 \quad \forall s \in \mathbb{R}$, we have
\begin{align}\notag
I_{11} &\leq \left| -\int_{\Omega} \sqrt{t}\rho(3\phi^2-1) \phi_t \nabla\phi\cdot \sqrt{t} \mathbf{u}_t \, dx \right| \\
&\leq C_{\rho*T} \|\sqrt{\rho t} \mathbf{u}_t\|_{2} (\|\phi\|_{\infty}^2+1) \| \phi_t\|_{4} \|\nabla\phi\|_{4} \\ \notag
&\leq  C_{\rho*T}(\|\phi\|_{\infty}^2+1)^2 \|\nabla\phi\|_{4}^2 \| \phi_t\|_{4}^2 + C \|\sqrt{\rho t} \mathbf{u}_t\|_{2}^2 ,\\
I_{12} &\leq \left| -\int_{\Omega} \sqrt{t} \rho(\phi^3 - \phi) \nabla\phi_t \cdot\sqrt{t} \mathbf{u}_t \, dx \right| \leq C_{\rho*T} \|\nabla\phi_t\|_{2} \|\sqrt{\rho t} \mathbf{u}_t\|_{2} \left( \|\phi\|_{\infty}^3 + \|\phi\|_{\infty} \right) \\ \notag
&\leq C_{\rho*T} \|\nabla\phi_t\|_{2}^2 \|\sqrt{\rho t} \mathbf{u}_t\|_{2}^2 + C\left( \|\phi\|_{\infty}^3 + \|\phi\|_{\infty} \right)^2.
\end{align}
For \( K_1\), according to \( \rho_t = - \mathbf{u} \cdot \nabla \rho \) and then performing an integration by parts, we get
\begin{equation}
\begin{split}\label{3.16666}
K_1 &\leq \left| \int_\Omega \sqrt{t} \, \text{div}(\rho \mathbf{u}) \, \phi_t \, \sqrt{t} \, \mu_t \, dx \right|
\leq\left| \int_\Omega \rho \mathbf{u} \cdot \nabla(t\phi_t \mu_t) \, dx \right| \\
&\leq \left| \int_\Omega t\rho \mathbf{u} \cdot\,\nabla \phi_t \,  \, \mu_t \, dx \right|
+ \left| \int_\Omega t\rho \mathbf{u} \,\cdot \phi_t \, \nabla \mu_t \, dx \right|
\\&\triangleq\sum_{i = 1}^{2} K_{1i}.
\end{split}
\end{equation}
Now we  bound the terms $ K_{1i}(i=1,2)$ in the above inequality \eqref{3.16666}. It follows from H\"{o}lder's and Young's inequalities that
\begin{align*}
K_{11} &\le C_{\rho*T} \, \|\sqrt{t} \, \nabla \phi_t\|_2 \, \|\mathbf{u}\|_\infty \, \|\sqrt{\rho t} \, \mu_t\|_2
\le\varepsilon \|\sqrt{\rho t} \,\mu_t\|_2^2
+ C_{\rho*T}   \|\mathbf{u}\|_\infty^2\,\|\sqrt{t} \, \nabla \phi_t\|_2^2,\\
K_{12} &\le C_{\rho*T} \, \|\sqrt{t} \, \nabla \mu_t\|_2 \, \|\phi_t\|_4 \, \|\mathbf{u}\|_4
\le \varepsilon \, \|\sqrt{t}\nabla \, \mu_t\|_2^2
+ C_{\rho*T} \, \|\mathbf{u}\|_4^2 \, \|\phi_t\|_4^2.
\end{align*}
For \( K_2\) and  \( K_3\),  using \( \rho_t = - \mathbf{u} \cdot \nabla \rho \) and then performing an integration by parts, we have
\begin{align} \notag
K_2 + K_3 &=  -\int_\Omega (\sqrt{t} \, \phi_t)_t \, \sqrt{t} \, \rho_t \, \phi^3 \, dx
+ \int_\Omega (\sqrt{t} \, \phi_t)_t \, \sqrt{t} \, \rho_t \, \phi \, dx =  \int_\Omega \rho \mathbf{u} \cdot\nabla \bigl( (\sqrt{t} \, \phi_t)_t \, \sqrt{t} \, (\phi - \phi^3) \bigr) dx \\
&= \int_\Omega \rho \mathbf{u}  \cdot\nabla  (\sqrt{t} \, \phi_t)_t \, \sqrt{t} \, (\phi - \phi^3)  dx
+ \int_\Omega \rho \mathbf{u} \, \cdot(\sqrt{t} \, \phi_t)_t \,\sqrt{t} (\nabla \phi - 3\phi^2 \nabla \phi) dx\\
&\triangleq \sum_{i = 1}^{2} K_{2i} \notag.
\end{align}
For $K_{21}$, we have
\begin{align*}
K_{21}&=\int_{\Omega}\frac{1}{2}\frac{1}{\sqrt{t}}\nabla\phi_{t}{\sqrt{t}}(\phi-\phi^3)\cdot\rho \mathbf{u}\mathrm{d}x+\int_{\Omega}\nabla\sqrt{t}\phi_{tt}{\sqrt{t}}(\phi-\phi^3)\cdot\rho \mathbf{u}\mathrm{d}x\triangleq\sum_{i = 1}^{2}K_{21i}.
\end{align*}
Obviously,
\begin{align*}
K_{211}&=\frac{1}{2}\int_{\Omega}\nabla\phi_{t}(\phi-\phi^3)\cdot\rho \mathbf{u}\mathrm{d}x\leq C_{\rho*}\|\nabla\phi_{t}\|_{2}(\|\phi\|_4+\|\phi^3\|_{4})\|\mathbf{u}\|_{4} \notag
\\&\leq C_{\rho*}\|\nabla\phi_{t}\|_{2}^{2}+C\|\mathbf{u}\|_{4}^{2}(\|\phi\|_4+\|\phi^3\|_{4})^{2}.
\end{align*}
For $K_{212}$, from \eqref{NSCH}$_{4}$,  we have
$$
\rho_{t}\phi_{t}+\rho\phi_{tt}+\rho_{t}\mathbf{u}\cdot\nabla\phi+\rho \mathbf{u}_{t}\cdot\nabla\phi+\rho \mathbf{u}\cdot\nabla\phi_{t}=\Delta\mu_{t},
$$
and then  performing an integration by parts, we conclude that
\begin{equation*}
\begin{split}\label{3.1888}
K_{212}&\leq -C_{\rho^*}\int_{\Omega}\phi_{tt}\cdot \Dv\bigl(t(\phi-\phi^3)\mathbf{u}\bigr)\mathrm{d}x\leq-
C_{\rho^*}\int_{\Omega}\phi_{tt}\cdot t\mathbf{u}\cdot(\nabla\phi-3\phi^2\nabla\phi)\mathrm{d}x\\
&\leq -C_{\rho^*\rho_*}\int_{\Omega}t \mathbf{u}\cdot(\nabla\phi-3\phi^2\nabla\phi)\left(-\rho_{t}\phi_{t}-\rho_{t}\mathbf{u}\cdot\nabla\phi-\rho \mathbf{u}_{t}\cdot\nabla\phi-\rho \mathbf{u}u\cdot\nabla\phi_{t}+\Delta\mu_{t}\right)\mathrm{d}x\\
&\leq C_{\rho^*\rho_*}\int_{\Omega}t\mathbf{u}\cdot(\nabla\phi-3\phi^2\nabla\phi) \rho_t \phi_{t}\mathrm{d}x+C_{\rho^*\rho_*}\int_{\Omega}t\mathbf{u}\cdot(\nabla\phi-3\phi^2\nabla\phi) \rho_t\mathbf{u}\cdot\nabla\phi\mathrm{d}x\\
&\quad+C_{\rho^*\rho_*}\int_{\Omega}t\mathbf{u}\cdot(\nabla\phi-3\phi^2\nabla\phi)
\cdot\rho \mathbf{u}_{t}\cdot\nabla\phi\mathrm{d}x
+C_{\rho^*\rho_*}\int_{\Omega}t\mathbf{u}\cdot(\nabla\phi-3\phi^2\nabla\phi)\cdot \rho \mathbf{u}\cdot\nabla\phi_{t}\mathrm{d}x\\
&\quad-C_{\rho^*\rho_*}\int_{\Omega}t\mathbf{u}\cdot
(\nabla\phi-3\phi^2\nabla\phi)
\cdot\Delta \mu_t\mathrm{d}x\\&\triangleq\sum_{i = 1}^{5}K_{212i}.
\end{split}
\end{equation*}
We shall bound these terms $K_{212i}(i=1,2,3,4,5)$  in the above inequality as follows.
Thanks to  \( \rho_t = - \mathbf{u} \cdot \nabla \rho \) and H\"{o}lder's and Young's inequalities, we have
\begin{align*}
K_{2121} &\leq C_{\rho^*\rho_*} \int_{\Omega} \nabla \bigl(t \mathbf{u} \cdot (\nabla\phi-3\phi^2\nabla\phi) \phi_{t}\bigr) \cdot \rho \mathbf{u} \, dx \leq +C_{\rho^*\rho_*} \int_{\Omega} \nabla t \mathbf{u} \cdot (\nabla\phi-3\phi^2\nabla\phi) \cdot \phi_{t}  \rho \mathbf{u} \, dx\\
 &\quad+C_{\rho^*\rho_*}\int_{\Omega} t \mathbf{u} \cdot (\nabla^2\phi-6\phi|\nabla\phi|^2-3\phi^2\nabla^2\phi)\cdot \phi_{t} \rho \mathbf{u} \, dx
 +C_{\rho^*\rho_*} \int_{\Omega} t \mathbf{u} \cdot(\nabla\phi-3\phi^2\nabla\phi) \cdot \nabla\phi_{t} \cdot\rho \mathbf{u} \, dx \\
&\leq C_{\rho^*\rho_*T}\|\phi_{t}\|_{6} \|\nabla \mathbf{u}\|_{2} (\|\nabla\phi\|_{6}+\|\phi\|_{\infty}^2\|\nabla\phi\|_{6}) \|\mathbf{u}\|_{6} + C_{\rho*\rho_*T} \|\phi_{t}\|_{2}( \|\nabla^2 \phi\|_{6}+\||\nabla\phi|^2\|_{6}\|\phi\|_{\infty}\\
&\quad+\|\nabla^2 \phi\|_{6}\|\phi\|_{\infty}^2) \||\mathbf{u}|^2\|_{3} + C_{\rho*\rho_*T} \|\nabla\phi_{t}\|_{2} (\|\nabla\phi\|_2+\|\phi\|_{\infty}^2\|\nabla\phi\|_2\|) \|\mathbf{u}\|_{\infty}^{2} \\
&\leq C_{\rho^*\rho_*T} \|\phi_{t}\|_{6}^{2} \|\nabla \mathbf{u}\|_{2}^{2}  + C \|\mathbf{u}\|_{6}^{2} (\|\nabla\phi\|_{6}+\|\phi\|_{\infty}^2\|\nabla\phi\|_{6})^2 \\
&\quad+ C_{\rho*\rho_*T} \|\mathbf{u}\|_{6}^{4} \|\phi_{t}\|_{2}^{2}
+ C ( \|\nabla^2 \phi\|_{6}+\||\nabla\phi|^2\|_{6}\|\phi\|_{\infty}+\|\nabla^2 \phi\|_{6}\|\phi\|_{\infty}^2) ^{2} \\
&\quad+ C_{\rho*\rho_*T} (\|\nabla\phi\|_2+\|\phi\|_{\infty}^2\|\nabla\phi\|_2\|)^2\|\nabla\phi_{t}\|_{2}^{2}
+ C \|\mathbf{u}\|_{\infty}^{4},\\
K_{2122} &\leq C_{\rho^*\rho_*} \int_{\Omega} \nabla \bigl(t \mathbf{u} \cdot(\nabla\phi-3\phi^2\nabla\phi) \cdot \mathbf{u} \cdot \nabla \phi) \cdot \rho \mathbf{u} \, dx  \leq C_{\rho^*\rho_*}\int_{\Omega} \nabla t \mathbf{u} \cdot (\nabla\phi-3\phi^2\nabla\phi) \cdot \mathbf{u} \cdot \nabla \phi \cdot \rho \mathbf{u} \, dx \\
&\quad+ C_{\rho^*\rho_*}\int_{\Omega} t \mathbf{u} \cdot (\nabla^2\phi-6\phi|\nabla\phi|^2-3\phi^2\nabla^2\phi) \cdot \mathbf{u} \cdot \nabla \phi \cdot \rho \mathbf{u} \, dx\\
&\quad+ C_{\rho^*\rho_*}\int_{\Omega} t \mathbf{u} \cdot (\nabla\phi-3\phi^2\nabla\phi)\cdot \nabla \mathbf{u} \cdot \nabla \phi \cdot \rho \mathbf{u} \, dx +C_{\rho^*\rho_*}\int_{\Omega} t \mathbf{u} \cdot(\nabla\phi-3\phi^2\nabla\phi)\cdot \mathbf{u} \cdot \nabla^2 \phi \cdot \rho \mathbf{u} \, dx \\
&\leq C_{\rho^*\rho_*T} \|\mathbf{u}\|_{\infty}^{2} \|\nabla \mathbf{u}\|_{2} (\|\nabla\phi\|_4+\|\nabla \phi\|_{4} \|\phi\|_{\infty}^2)\|\nabla \phi\|_{4} \\
&\quad+ C_{\rho*\rho_*T} \|\mathbf{u}\|_{\infty}^{2}( \|\nabla^2 \phi\|_{4}+ \||\nabla \phi|^2\|_{4}\|\phi\|_{\infty}+\|\phi\|_{\infty}^2\|\nabla^2\phi\|_{4} )\|\mathbf{u}\|_{4}\|\nabla \phi\|_{2}\\
&\quad+ C_{\rho*\rho_*T} \|\mathbf{u}\|_{\infty}^{2} \|\nabla^2 \phi\|_{4} (\|\nabla \phi\|_{2}+ \|\nabla \phi\|_{2}\| \phi\|_{\infty}^2) \|\mathbf{u}\|_{4} \\
&\leq C_{\rho^*\rho_*T} \|\mathbf{u}\|_{\infty}^{4} \|\nabla \mathbf{u}\|_{2}^{2} + C  (\|\nabla\phi\|_4+\|\nabla \phi\|_{4} \|\phi\|_{\infty}^2)^{2} \|\nabla \phi\|_{4}^{2}
\\ &\quad+ C_{\rho*\rho_*T} \|\mathbf{u}\|_{\infty}^{4} \|\nabla \phi\|_{2}^{2} + C \|\mathbf{u}\|_{4}^{2}( \|\nabla^2 \phi\|_{4}+ \||\nabla \phi|^2\|_{4}\|\phi\|_{\infty}+\|\phi\|_{\infty}^2\|\nabla^2\phi\|_{4})^{2}\\
&\quad+ C_{\rho*\rho_*T} \|\mathbf{u}\|_{\infty}^{4}(\|\nabla \phi\|_{2}+ \|\nabla \phi\|_{2}\| \phi\|_{\infty}^2)^{2} + C \|\mathbf{u}\|_{4}^{2} \|\nabla^2 \phi\|_{4}^{2},\\
K_{2123} &\leq C_{\rho^*\rho_*T}\|\sqrt {\rho t}\mathbf{u}_t\|_{2} (\|\nabla \phi\|_{4}+ \|\nabla \phi\|_{4}\| \phi\|_{\infty}^2)  \|\nabla\phi\|_{4} \|\mathbf{u}\|_{\infty}\\
&\leq C_{\rho*\rho_*T}\|\mathbf{u}\|_{\infty}^2 \|\sqrt {\rho t}\mathbf{u}_t\|_{2}^2+C(\|\nabla \phi\|_{4}+ \|\nabla \phi\|_{4}\| \phi\|_{\infty}^2)^2 \|\nabla\phi\|_{4}^2 ,\\
K_{2124} &\leq C_{\rho*\rho_*T}\|\nabla\sqrt t\phi_t\|_{2} \|\mathbf{u}\|_{\infty}^2  (\|\nabla \phi\|_{2}+ \|\nabla \phi\|_{2}\| \phi\|_{\infty}^2)\\
&\leq C_{\rho*\rho_*T}\|\mathbf{u}\|_{\infty}^4 \|\nabla\sqrt t\phi_t\|_{2}^2+C (\|\nabla \phi\|_{2}+ \|\nabla \phi\|_{2}\| \phi\|_{\infty}^2)^2.
\end{align*}
Moreover, by integration by parts and embedding inequality, we conclude that
\begin{align*}
K_{2125}\leq& C_{\rho^*\rho_*} \int_{\Omega} \nabla\bigl(t \mathbf{u} \cdot (\nabla\phi-3\phi^2\nabla\phi)\bigr) \cdot \nabla \mu_t \, dx\\
&\leq C_{\rho^*\rho_*}\int_{\Omega} \nabla t \mathbf{u} \cdot (\nabla\phi-3\phi^2\nabla\phi) \cdot \nabla \mu_t \, dx\\&\quad+C_{\rho^*\rho_*} \int_{\Omega} t \mathbf{u} \cdot (\nabla^2\phi-6\phi|\nabla\phi|^2-3\phi^2\nabla^2\phi)\cdot \nabla \mu_t \, dx\\
&\leq C_{\rho^*\rho_*T} \|\nabla\sqrt {t}\mu_t\|_{2}\|\nabla \mathbf{u}\|_{4} (\|\nabla\phi\|_{4}+\|\phi\|_{\infty}^2\|\nabla\phi\|_{4})\\
&\quad+C_{\rho^*\rho_*T} \|\nabla\sqrt {t}\mu_t\|_{2}^2 ( \|\nabla^2 \phi\|_{4}+\||\nabla\phi|^2\|_{4}\|\phi\|_{\infty}+\|\nabla^2 \phi\|_{4}\|\phi\|_{\infty}^2)\|\mathbf{u}\|_{4}\\
&\leq\varepsilon\|\nabla\sqrt {t}\mu_t\|_{2}^2+C_{\rho^*\rho_*T} (\|\nabla\phi\|_{4}+\|\phi\|_{\infty}^2\|\nabla\phi\|_{4})^2 \|\nabla \mathbf{u}\|_{4}^2+\varepsilon\|\nabla\sqrt {t}\mu_t\|_{2}^2+C_{\rho^*\rho_*T} ( \|\nabla^2 \phi\|_{4}\\
&\quad+\||\nabla\phi|^2\|_{4}\|\phi\|_{\infty}+\|\nabla^2 \phi\|_{4}\|\phi\|_{\infty}^2)^2 \|u\|_{4}^2.
\end{align*}
For $K_{22}$, we  easily see that
\begin{align*}
K_{22}&=\int_{\Omega}\frac{1}{2}\frac{1}{\sqrt{t}}\phi_{t}{\sqrt{t}}
(\nabla\phi-3\phi^2\nabla\phi)\cdot\rho \mathbf{u}\mathrm{d}x+\int_{\Omega}\sqrt{t}\phi_{tt}{\sqrt{t}}(\nabla\phi-3\phi^2\nabla\phi)
\cdot\rho \mathbf{u}\mathrm{d}x\triangleq\sum_{i = 1}^{2}K_{22i}.
\end{align*}
Obviously,
\begin{align*}
K_{221}&=\frac{1}{2}\int_{\Omega}\phi_{t}(\nabla\phi-3\phi^2\nabla\phi)\cdot\rho \mathbf{u}\mathrm{d}x\leq C_{\rho*}\|\phi_{t}\|_{2}(\|\nabla\phi\|_4+\|\phi\|_{\infty}^2\|\nabla\phi\|_{4})
\|\mathbf{u}\|_{4}\\
&\leq C_{\rho*}\|\phi_{t}\|_{2}^{2}+C
(\|\nabla\phi\|_4+\|\phi\|_{\infty}^2\|\nabla\phi\|_{4})
\|\mathbf{u}\|_{4}^{2}.
\end{align*}
For $K_{222}$, the derivation  is totally  similar to $K_{212}$, we omit it. 
For \( K_4\), by performing an integration by parts on time $t$, we have
\begin{align}
K_4=-\frac{3}{2}\int_{\Omega}\frac{d}{dt}(|\sqrt{t}\phi_t|^2\rho\phi^2)dx + \frac{3}{2}\int_{\Omega}|\sqrt{t}\phi_t|^2\rho_t\phi^2dx
+3\int_{\Omega}|\sqrt{t}\phi_t|^2\rho\phi\phi_tdx
\triangleq\sum_{i = 1}^3 K_{4i}.
\end{align}
We bound term by term above in what follows. According  to \( \rho_t = - \mathbf{u} \cdot \nabla \rho \), we deduce that
\begin{align*}
K_{42}&=-\frac{3}{2}\int_{\Omega}\nabla(|\sqrt{t}\phi_t|^2|\phi|^2)\cdot\rho \mathbf{u}\ dx\\
&=-\frac{3}{2}\int_{\Omega}2(\sqrt{t}\phi_t)\nabla(\sqrt{t}\phi_t)|\phi|^2\
\cdot\rho \mathbf{u}\ dx
-\frac{3}{2}\int_{\Omega}|\sqrt{t}\phi_t|^2 2\phi\nabla\phi\ \cdot\rho \mathbf{u}\ dx\\
&\leq C_{\rho^*T}\|\nabla\sqrt{t}\phi_t\|_2\|\phi_t\|_2\|\phi\|_{\infty}^2\|\mathbf{u}\|_{\infty}
+C_{\rho^*T}\|\phi_t\|_6^2\|\phi\|_6\|\nabla\phi\|_4\|\mathbf{u}\|_4\\
&\leq C_{\rho^*T}\|\nabla\sqrt{t}\phi_t\|_2^2\|\phi_t\|_2^2 + C\|\mathbf{u}\|_{\infty}^2\|\phi\|_{\infty}^4+ C_{\rho^*T}\|\phi_t\|_6^2\|\phi\|_6\|\nabla\phi\|_4\|\mathbf{u}\|_4.
\end{align*}
It follows from  H\"{o}lder's and Young's inequalities,  that
\begin{align*}
K_{43}&\leq C_{\rho^*T}\|\sqrt{t}\phi_t\|_6^2\|\phi_t\|_2\|\phi\|_6\leq C_{\rho^*T}\|\nabla\sqrt{t}\phi_t\|_2^2\|\phi_t\|_2\|\phi\|_6.
\end{align*}
For $K_{5}$, we have
\begin{align*}
K_{5}&=\int_{\Omega}\frac{1}{2}\frac{1}{\sqrt{t}}\rho\phi_{t}{\sqrt{t}}\phi_t\mathrm{d}x
+\int_{\Omega}\sqrt{t}\phi_{tt}{\sqrt{t}}\rho\phi_t \mathrm{d}x\triangleq\sum_{i = 1}^{2}K_{5i}.
\end{align*}
Obviously,
\begin{align*}
K_{51}&=\frac{1}{2}\int_{\Omega}\rho|\phi_t|^2\mathrm{d}x\leq C_{\rho*}\|\phi_t\|_{2}^2.
\end{align*}
For $K_{52}$, by \eqref{NSCH}$_{4}$, we have
$$
\rho_{t}\phi_{t}+\rho\phi_{tt}+\rho_{t}\mathbf{u}\cdot\nabla\phi+\rho \mathbf{u}_{t}\cdot\nabla\phi+\rho \mathbf{u}\cdot\nabla\phi_{t}=\Delta\mu_{t},
$$
and then  performing  an integration by parts,  we arrive at
\begin{equation*}
\begin{split}\label{3.2000}
K_{52}&=\int_{\Omega}t\rho\phi_{tt}\phi_{t}\mathrm{d}x
= \int_{\Omega}t \phi_{t}\left(-\rho_{t}\phi_{t}-\rho_{t}\mathbf{u}\cdot\nabla\phi-\rho \mathbf{u}_{t}\cdot\nabla\phi-\rho \mathbf{u}\cdot\nabla\phi_{t}+\Delta\mu_{t}\right)\mathrm{d}x\\
&=-\int_{\Omega}t\phi_{t}\rho_t \phi_{t}\mathrm{d}x-\int_{\Omega} t\phi_{t} \rho_t\mathbf{u}\cdot\nabla\phi\mathrm{d}x
-\int_{\Omega}t\phi_{t}
\cdot\rho \mathbf{u}_{t}\cdot\nabla\phi\mathrm{d}x\\
&\quad-\int_{\Omega}t\phi_{t}\cdot \rho \mathbf{u}\cdot\nabla\phi_{t}\mathrm{d}x+\int_{\Omega}t\phi_{t}
\cdot\Delta \mu_t\mathrm{d}x\\&\triangleq\sum_{i = 1}^{5}K_{52i}.
\end{split}
\end{equation*}
Next, we bound  these terms  in the above inequlity. By \( \rho_t = - \mathbf{u} \cdot \nabla \rho \) and H\"{o}lder's and Young's inequalities, we have
\begin{align*}
K_{521} &= -\int_{\Omega} \nabla t |\phi_t|^2 \cdot \rho \mathbf{u} \, dx = -2\int_{\Omega} t\phi_t \nabla\phi_t \cdot \rho \mathbf{u} \, dx
\\&\leq C_{\rho*T} \| \mathbf{u}\|_{\infty} \| \nabla\sqrt t \phi_t \|_2 \| \phi_t \|_2
\leq C_{\rho*T} \| \mathbf{u} \|_{\infty}^2 \| \nabla\sqrt t \phi_t \|_2^2 + C \| \phi_t \|_2^2, \\
K_{522} &= -\int_{\Omega} \nabla(t\phi_t \mathbf{u} \cdot \nabla \phi) \cdot \rho \mathbf{u} \, dx
\\&= -\int_{\Omega} \nabla t \phi_t \cdot \mathbf{u} \cdot \nabla \phi \cdot \rho \mathbf{u} \, dx - \int_{\Omega} t\phi_t \nabla \mathbf{u} \cdot \nabla \phi \cdot \rho \mathbf{u} \, dx - \int_{\Omega} t\phi_t \mathbf{u} \cdot \nabla^2 \phi \cdot \rho \mathbf{u} \, dx \\
&\leq C_{\rho*T} \| \nabla \sqrt t \phi_t \|_2 \| \mathbf{u} \|_{\infty}^2 \| \nabla \phi \|_2 + C_{\rho*T} \| \phi_t \|_6 \| \nabla \mathbf{u} \|_2 \| \nabla \phi \|_6 \| \mathbf{u} \|_6 + C_{\rho*T} \| \nabla ^2\phi \|_2 \| \phi_t \|_2 \| \mathbf{u} \|_{\infty}^2 \\
&\leq C_{\rho*T} \| \mathbf{u} \|_{\infty}^4 \| \nabla\sqrt t \phi_t \|_2^2 + C \| \nabla \phi \|_2^2 + C_{\rho*T} \| \phi_t \|_6^2 + C \| \nabla \mathbf{u} \|_2^2 \| \nabla \phi \|_6^2 \| \mathbf{u} \|_6^2 \\&\quad+ C_{\rho*T} \| \phi_t \|_2^2 + C \| \nabla ^2\phi \|_2^2 \| \mathbf{u} \|_{\infty}^4, \\
K_{523} &\leq C_{\rho*T} \| \sqrt{\rho t} \mathbf{u}_t \|_2 \| \phi_t \|_4 \| \nabla \phi \|_4 \leq C_{\rho*T} \| \nabla \phi \|_4^2 \| \sqrt{\rho t} \mathbf{u}_t \|_2^2 + C \| \phi_t \|_4^2, \\
K_{524} &\leq C_{\rho*T} \| \nabla \sqrt t \phi_t \|_2 \| \phi_t \|_2 \| \mathbf{u} \|_{\infty} \leq C_{\rho*T} \| \mathbf{u} \|_{\infty}^2 \|\nabla \sqrt t \phi_t \|_2^2 + C \| \phi_t \|_2^2,\\
K_{525} &= -\int_{\Omega} \nabla t \phi_t \cdot \nabla \mu_t \, dx \leq \| \nabla \sqrt t \mu_t \|_2 \| \nabla \sqrt t \phi_t \|_2 \leq \varepsilon \| \nabla \sqrt t \mu_t \|_2^2 + C \| \nabla \sqrt t\phi_t \|_2^2.
\end{align*}
For $K_6$, it follows from \( \rho_t = - \mathbf{u}\cdot \nabla \rho \), that
\begin{align}
K_6&=-\int_{\Omega}(\sqrt{t}\phi_t)_t\sqrt{t}\mu\ \text{div}(\rho\mathbf{u})dx = \int_{\Omega}\nabla\bigl((\sqrt{t}\phi_t)_t\sqrt{t}\mu\bigr)\cdot\rho\mathbf{u}dx\notag\\
&=\int_{\Omega}\nabla(\sqrt{t}\phi_t)_t\sqrt{t}\mu \cdot \rho \mathbf{u}\ dx+\int_{\Omega}(\sqrt{t}\phi_t)_t\nabla\sqrt{t}\mu\cdot\rho \mathbf{u}\ dx
\\&\triangleq\sum_{i = 1}^2 K_{6i}\notag.
\end{align}
For $K_{61}$, we have
\begin{align*}
K_{61}&=\int_{\Omega}\frac{1}{2}\frac{1}{\sqrt{t}}\nabla\phi_{t}{\sqrt{t}}\mu\cdot\rho \mathbf{u}\mathrm{d}x+\int_{\Omega}\nabla\sqrt{t}\phi_{tt}{\sqrt{t}}\mu\cdot\rho \mathbf{u}\mathrm{d}x\triangleq\sum_{i = 1}^{2}K_{61i}.
\end{align*}
Obviously,
\begin{align*}
K_{611}&=\frac{1}{2}\int_{\Omega}\nabla\phi_{t}\mu\cdot\rho \mathbf{u}\mathrm{d}x\leq C_{\rho*}\|\nabla\phi_{t}\|_{2}\|\mu\|_{4}\|\mathbf{u}\|_{4}\leq C_{\rho*}\|\nabla\phi_{t}\|_{2}^{2}+C\|\mathbf{u}\|_{4}^{2}\|\mu\|_{4}^{2}.
\end{align*}
For $K_{612}$, by \eqref{NSCH}$_{4}$, we have
$$
\rho_{t}\phi_{t}+\rho\phi_{tt}+\rho_{t}\mathbf{u}\mathbf{u}\cdot\nabla\phi+\rho \mathbf{u}_{t}\cdot\nabla\phi+\rho \mathbf{u}\cdot\nabla\phi_{t}=\Delta\mu_{t}.
$$
Furthermore, it follows from  performing an integration by parts, that
\begin{align*}
K_{612}&\leq C_{\rho^*}\int_{\Omega}\phi_{tt}\cdot \Dv(t\mu \mathbf{u})\mathrm{d}x\leq
C_{\rho^*}\int_{\Omega}\phi_{tt}\cdot t\mathbf{u}\cdot\nabla\mu\mathrm{d}x\\
&\leq C_{\rho^*\rho_*}\int_{\Omega}t \mathbf{u}\cdot\nabla\mu\left(-\rho_{t}\phi_{t}-\rho_{t}\mathbf{u}\cdot\nabla\phi-\rho \mathbf{u}_{t}\cdot\nabla\phi-\rho \mathbf{u}\cdot\nabla\phi_{t}+\Delta\mu_{t}\right)\mathrm{d}x\\
&\leq- C_{\rho^*\rho_*}\int_{\Omega}t\mathbf{u}\cdot\nabla \mu\rho_t \phi_{t}\mathrm{d}x-C_{\rho^*\rho_*}\int_{\Omega}tu\cdot\nabla \mu \cdot\rho_t\mathbf{u}\cdot\nabla\phi\mathrm{d}x
-C_{\rho^*\rho_*}\int_{\Omega}t\mathbf{u}\cdot\nabla \mu\cdot\rho \mathbf{u}_{t}\cdot\nabla\phi\mathrm{d}x\\
&\quad-C_{\rho^*\rho_*}\int_{\Omega}t\mathbf{u}\cdot\nabla \mu\cdot \rho \mathbf{u}\cdot\nabla\phi_{t}\mathrm{d}x+C_{\rho^*\rho_*}\int_{\Omega}t\mathbf{u}\cdot\nabla \mu\cdot\Delta \mu_t\mathrm{d}x\\&\triangleq\sum_{i = 1}^{5}K_{612i}.
\end{align*}
We bound term by term above in what follows. By \( \rho_t = - \mathbf{u}\cdot \nabla \rho \) and H\"{o}lder's and Young's inequalities, we have
\begin{align*}
K_{6121} &\leq -C_{\rho^*\rho_*} \int_{\Omega} \nabla (t \mathbf{u} \cdot \nabla \mu \phi_{t}) \cdot \rho \mathbf{u}\, dx \\
&\leq -C_{\rho^*\rho_*} \int_{\Omega} \nabla t \mathbf{u}\cdot \nabla \mu \cdot \phi_{t} \cdot \rho \mathbf{u} \, dx
 - C_{\rho^*\rho_*}\int_{\Omega} t \mathbf{u} \cdot \nabla^2 \mu  \phi_{t} \cdot \rho \mathbf{u} \, dx
 -C_{\rho^*\rho_*} \int_{\Omega} t \mathbf{u} \cdot \nabla \mu \cdot \nabla\phi_{t} \cdot\rho \mathbf{u} \, dx \\
&\leq C_{\rho^*\rho_*T}\|\phi_{t}\|_{6} \|\nabla \mathbf{u}\|_{2} \|\nabla \mu\|_{6} \|\mathbf{u}\|_{6} + C_{\rho*\rho_*T} \|\phi_{t}\|_{2} \|\nabla^2 \mu\|_{6} \||\mathbf{u}|^2\|_{3}  + C_{\rho*\rho_*T} \|\nabla \phi_{t}\|_{2} \|\nabla \mu\|_{2} \|\mathbf{u}\|_{\infty}^{2} \\
&\leq C_{\rho^*\rho_*T} \|\phi_{t}\|_{6}^{2} \|\nabla \mathbf{u}\|_{2}^{2}  + C \|\mathbf{u}\|_{6}^{2} \|\nabla \mu\|_{6}^{2} + C_{\rho*\rho_*T} \|\mathbf{u}\|_{6}^{4} \|\phi_{t}\|_{2}^{2}  + C \|\nabla^2 \mu\|_{6}^{2} \\&\quad + C_{\rho*\rho_*T} \|\nabla \mu\|_{2}^{2} \|\nabla\phi_{t}\|_{2}^{2} + C \|\mathbf{u}\|_{\infty}^{4},\\
K_{6122} &\leq -C_{\rho^*\rho_*} \int_{\Omega} \nabla (t \mathbf{u} \cdot \nabla\mu \cdot \mathbf{u} \cdot \nabla \phi) \cdot \rho \mathbf{u} \, dx
\\& \leq- C_{\rho^*\rho_*}\int_{\Omega} \nabla t \mathbf{u} \cdot \nabla \mu \cdot \mathbf{u} \cdot \nabla \phi \cdot \rho \mathbf{u} \, dx - C_{\rho^*\rho_*}\int_{\Omega} t \mathbf{u} \cdot \nabla ^2\mu \cdot \mathbf{u} \cdot \nabla \phi \cdot \rho \mathbf{u} \, dx\\
&\quad- C_{\rho^*\rho_*}\int_{\Omega} t \mathbf{u} \cdot \nabla \mu \cdot \nabla \mathbf{u} \cdot \nabla \phi \cdot \rho \mathbf{u} \, dx -C_{\rho^*\rho_*}\int_{\Omega} t \mathbf{u} \cdot \nabla \mu \cdot \mathbf{u} \cdot \nabla^2 \phi \cdot \rho \mathbf{u} \, dx \\
&\leq C_{\rho^*\rho_*T} \|\mathbf{u}\|_{\infty}^{2} \|\nabla \mathbf{u}\|_{2} \|\nabla \mu\|_{4} \|\nabla \phi\|_{4} + C_{\rho*\rho_*T} \|\mathbf{u}\|_{\infty}^{2} \|\nabla^2 \mu\|_{4} \|\nabla \phi\|_{4} \|\mathbf{u}\|_{2}  \\&\quad+ C_{\rho*\rho_*T} \|\mathbf{u}\|_{\infty}^{2} \|\nabla^2 \phi\|_{4} \|\nabla \mu\|_{2} \|\mathbf{u}\|_{4} \\
&\leq C_{\rho^*\rho_*T} \|\mathbf{u}\|_{\infty}^{4} \|\nabla \mathbf{u}\|_{2}^{2} + C \|\nabla \mu\|_{4}^{2} \|\nabla \phi\|_{4}^{2} + C_{\rho*\rho_*T} \|\mathbf{u}\|_{\infty}^{4} \|\nabla \phi\|_{4}^{2} + C \|\mathbf{u}\|_{2}^{2} \|\nabla^2 \mu\|_{4}^{2}\\
&\quad+ C_{\rho*\rho_*T} \|\mathbf{u}\|_{\infty}^{4} \|\nabla \mu\|_{2}^{2} + C \|\mathbf{u}\|_{4}^{2} \|\nabla^2 \phi\|_{4}^{2},\\
K_{6123} &\leq C_{\rho^*\rho_*T}\|\sqrt {\rho t}\mathbf{u}_t\|_{2} \|\nabla \mu\|_{4}  \|\nabla\phi\|_{4} \|\mathbf{u}\|_{\infty}\\& \leq C_{\rho*\rho_*T}\|\mathbf{u}\|_{\infty}^2 \|\sqrt {\rho t}\mathbf{u}_t\|_{2}^2+C\|\nabla \mu\|_{4}^2 \|\nabla\phi\|_{4}^2 ,\\
K_{6124} &\leq C_{\rho*\rho_*T}\|\nabla\sqrt t\phi_t\|_{2} \|\mathbf{u}\|_{\infty}^2  \|\nabla \mu\|_{2}\\& \leq C_{\rho*\rho_*T}\|\mathbf{u}\|_{\infty}^4 \|\nabla\sqrt t\phi_t\|_{2}^2+C \|\nabla\mu\|_{2}^2.
\end{align*}
Moreover,  by integration by parts and embedding inequality, we have
\begin{align*}
K_{6125}&\leq- C_{\rho^*\rho_*} \int_{\Omega} \nabla(t \mathbf{u} \cdot \nabla \mu) \cdot \nabla \mu_t \, dx\\
&\leq -C_{\rho^*\rho_*}\int_{\Omega} \nabla t \mathbf{u} \cdot \nabla \mu \cdot \nabla \mu_t \, dx-C_{\rho^*\rho_*}\int_{\Omega} t \mathbf{u} \cdot \nabla^2 \mu\cdot \nabla \mu_t \, dx\\
&\leq C_{\rho^*\rho_*T} \|\nabla\sqrt {t}\mu_t\|_{2}\|\nabla \mathbf{u}\|_{2} \|\nabla\mu\|_{\infty}+C_{\rho^*\rho_*T} \|\nabla\sqrt {t}\mu_t\|_{2}^2\|\nabla^2 \mu\|_{4}\|\mathbf{u}\|_{4}\\
&\leq\varepsilon\|\nabla\sqrt {t}\mu_t\|_{2}^2+C_{\rho^*\rho_*T}\|\nabla \mu\|_{W^{1.6}}^2 \|\nabla \mathbf{u}\|_{2}^2+\varepsilon\|\nabla\sqrt {t}\mu_t\|_{2}^2+C_{\rho^*\rho_*T}\|\nabla^2 \mu\|_{4}^2 \| \mathbf{u}\|_{4}^2.
\end{align*}
For $K_{62}$, we have
\begin{align*}
K_{62}&=\int_{\Omega}\frac{1}{2}\frac{1}{\sqrt{t}}\phi_{t}\nabla{\sqrt{t}}\mu\cdot\rho \mathbf{u}\mathrm{d}x+\int_{\Omega}\sqrt{t}\phi_{tt}\nabla{\sqrt{t}}\mu\cdot\rho \mathbf{u}\mathrm{d}x\triangleq\sum_{i = 1}^{2}K_{62i}.
\end{align*}
Obviously,
\begin{align*}
K_{621}&=\frac{1}{2}\int_{\Omega}\phi_{t}\nabla\mu\cdot\rho \mathbf{u}\mathrm{d}x\leq C_{\rho*}\|\phi_{t}\|_{2}\|\nabla\mu\|_{2}\|\mathbf{u}\|_{\infty}\leq C_{\rho*}\|\phi_{t}\|_{2}^{2}+C\|\mathbf{u}\|_{\infty}^{2}\|\nabla\mu\|_{2}^{2}.
\end{align*}
 By a similar derivation of $K_{612}$,  we can bound   $K_{622}$ and omit it here.
For $K_{7} $, it follows from H\"{o}lder's and Young's inequalities that
\begin{align}
K_7 = \frac{1}{2} \int_{\Omega} \rho \phi_t \mu_t \, dx
\leq C_{\rho^*} \|\phi_t\|_2 \sqrt{\rho t} \mu_t\|_2\leq C_{\rho^*} \|\phi_t\|_2^2 + \varepsilon\sqrt{\rho t} \mu_t\|_2^2 .
\end{align}
For $K_8$, thanks to \( \rho_t = - \mathbf{u} \cdot \nabla \rho \), we deduce that
\begin{align} \notag
K_8 &= \int_{\Omega} \sqrt{t}\text{div}(\rho \mathbf{u}) \cdot \mathbf{u} \cdot\nabla \phi\sqrt{t} \mu_t \, dx = -\int_{\Omega} \rho \mathbf{u} \cdot \nabla (t\mathbf{u} \cdot\nabla \phi \mu_t) \, dx \\
&= -\int_{\Omega}t \rho \mathbf{u} \cdot \nabla \mathbf{u}\cdot \nabla \phi\mu_t \, dx - \int_{\Omega} t\rho |\mathbf{u}|^2 \cdot\nabla ^2\phi \mu_t \, dx - \int_{\Omega}t \rho |\mathbf{u}|^2\cdot \nabla \phi \nabla \mu_t \\&\triangleq \sum_{i=1}^{3} K_{8i}\notag.
\end{align}
We bound term by term above in what follows.  According to H\"{o}lder's and Young's inequalities, we get
\begin{align*}
K_{81} &\leq C_{\rho^* T} \|\sqrt{\rho t} \mu_t\|_2 \|\nabla \mathbf{u}\|_2 \|\nabla \phi\|_\infty \|\mathbf{u}\|_{\infty} \leq \varepsilon \|\sqrt{\rho t} \mu_t\|_2^2 + C_{\rho^*T} \|\nabla \mathbf{u}\|_2^2 \|\nabla \phi\|_\infty^2 \|\mathbf{u}\|_{\infty}^2, \\
K_{82} &\leq C_{\rho^* T} \|\mathbf{u}\|_{\infty}^2 \|\sqrt{\rho t} \mu_t\|_2 \|\nabla^2 \phi\|_2 \leq  \varepsilon\|\sqrt{\rho t} \mu_t\|_2^2 + C_{\rho^* T}\|\mathbf{u}\|_{\infty}^4 \|\nabla ^2\phi\|_2^2,\\
K_{83} &\leq C_{\rho^* T} \|\nabla\sqrt{t} \mu_t\|_2 \|\nabla \phi\|_2 \|\mathbf{u}\|_{\infty}^2 \leq \varepsilon \|\nabla\sqrt{t} \mu_t\|_2^2 + C_{\rho^* T} \|\mathbf{u}\|_{\infty}^4 \|\nabla \phi\|_2^2.
\end{align*}
It follows from  H\"{o}lder's and Young's inequalities, that
\begin{align}
K_9& \leq C_{\rho^* T} \|\sqrt{\rho t} \mathbf{u}_t\|_2 \|\sqrt{\rho t} \mu_t\|_2 \|\nabla \phi\|_\infty \leq C_{\rho^* T} \|\nabla \phi\|_\infty^2 \|\sqrt{\rho t} \mathbf{u}_t\|_2^2 + \varepsilon \|\sqrt{\rho t} \mu_t\|_2^2,\\
K_{10} &\leq C_{\rho^*} \|\nabla \sqrt{t}\phi_t\|_2 \|\sqrt{\rho t} \mu_t\|_2 \|\mathbf{u}\|_{\infty} \leq C_{\rho^*}\|\mathbf{u}\|_{\infty}^2 \|\nabla\sqrt{t}\phi_t\|_2^2 + \varepsilon \|\sqrt{\rho t} \mu_t\|_2^2.
\end{align}
For $L_{2}$ and $L_{3}$, by \( \rho_t = - \mathbf{u} \cdot \nabla \rho \), we have
\begin{equation}
\begin{split}\label{3.2666}
L_2 &= \frac{1}{2} \int_{\Omega} t \, \text{div}(\rho \mathbf{u}) |\phi_t|^2 dx = -\frac{1}{2} \int_{\Omega} \rho \mathbf{u} \cdot \nabla t |\phi_t|^2 dx = -\frac{1}{2} \int_{\Omega} t \rho \mathbf{u} \cdot 2\phi_t \nabla \phi_t \\
& \leq C_{\rho^* T} \|\nabla\sqrt{t} \phi_t\|_2 \|\phi_t\|_2 \|\mathbf{u}\|_{\infty} \leq C_{\rho^* T} \|\mathbf{u}\|_{\infty}^2 \|\nabla\sqrt{t} \phi_t\|_2^2 + C \|\phi_t\|_2^2,
\end{split}
\end{equation}
\begin{equation}
\begin{split}\label{3.2777}
L_3 &= \int_{\Omega} t \, \text{div}(\rho \mathbf{u}) \cdot \mathbf{u} \cdot\nabla \phi \phi_t dx = -\int_{\Omega} t \rho \mathbf{u} \cdot \nabla (\mathbf{u}\cdot \nabla \phi \phi_t) dx = -\int_{\Omega} t \rho \mathbf{u} \cdot \nabla \mathbf{u} \cdot\nabla \phi \phi_t dx \\
&\quad-\int_{\Omega} t \rho |\mathbf{u}|^2\cdot \nabla^2 \phi \phi_t dx - \int_{\Omega} t \rho |\mathbf{u}|^2 \cdot \nabla \phi\cdot \nabla \phi_t dx  \\&\triangleq \sum_{i=1}^{3} L_{3i}.
\end{split}
\end{equation}
According to H\"{o}lder's and Young's inequalities, we get
\begin{align*}
L_{31}& \leq C_{\rho^* T} \|\mathbf{u}\|_{\infty} \|\nabla \mathbf{u}\|_2 \|\nabla \phi\|_4 \| \phi_t\|_4 \leq  C_{\rho^* T}\|\mathbf{u}\|_{\infty}^2 \| \nabla\phi\|_4^2 + C \|\nabla \mathbf{u}\|_2^2 \| \phi_t\|_4^2, \\
L_{32} &\leq C_{\rho^* T} \|\nabla^2 \phi\|_2 \|\mathbf{u}\|_{\infty}^2 \| \phi_t\|_2 \leq C_{\rho^* T} \|\nabla^2 \phi\|_2^2 \|\mathbf{u}\|_{\infty}^4 +C \|\phi_t\|_2^2,\\
L_{33} &\leq C_{\rho^* T} \|\nabla \phi\|_2 \|\nabla\sqrt{t} \phi_t\|_2 \|\mathbf{u}\|_{\infty}^2 \leq C_{\rho^* T}\|\mathbf{u}\|_{\infty}^4 \|\nabla\sqrt{t} \phi_t\|_2^2 + C \|\nabla \phi\|_2^2.
\end{align*}
For $L_4$ and  $L_5$, it follows from H\"{o}lder's and Young's inequalities that
\begin{equation}
\begin{split}\label{3.2888}
L_4 &= -\int_{\Omega} t \rho \mathbf{u}_t\cdot \nabla \phi \phi_t dx
\leq C_{\rho^* T} \|\phi_t\|_4 \|\sqrt{\rho t} \mathbf{u}_t\|_2 \|\nabla \phi\|_4 \\
&\leq C_{\rho^* T} \|\sqrt{\rho t} \mathbf{u}_t\|_2^2 + C \|\nabla \phi\|_4^2 \|\phi_t\|_4^2,
\end{split}
\end{equation}
\begin{equation}
\begin{split}\label{3.2999}
L_5& = -\int_{\Omega} t \rho \mathbf{u}\cdot \nabla \phi_t \phi_t dx \leq C_{\rho^* T} \|\mathbf{u}\|_{\infty} \|\nabla\sqrt{t} \phi_t\|_2 \|\phi_t\|_2 \leq C_{\rho^* T}\|\mathbf{u}\|_{\infty}^2 \|\nabla\sqrt{t} \phi_t\|_2^2 + C\|\phi_t\|_2^2.
\end{split}
\end{equation}
For $L_{6}$, by \( \rho_t = - \mathbf{u} \cdot \nabla \rho \),  H\"{o}lder's and Young's inequalities, we have
\begin{equation}
\begin{split}\label{3.3000}
L_6 &= \int_{\Omega} t \, \text{div}(\rho \mathbf{u}) \mu \mu_t dx = -\int_{\Omega} \rho \mathbf{u} \cdot \nabla (t \mu \mu_t) dx \\
\quad &= -\int_{\Omega} t\rho \mathbf{u} \cdot \nabla \mu \mu_t dx - \int_{\Omega} t \rho \mathbf{u} \mu \cdot\nabla \mu_t dx
\\&\leq C_{\rho^* T} \|\sqrt{\rho t} \mu_t\|_2 \|\nabla \mu\|_2 \|\mathbf{u}\|_{\infty} +C_{\rho^* T} \|\nabla\sqrt{t} \mu_t\|_2 \|\mathbf{u}\|_4 \|\mu\|_4
\\&\leq \varepsilon \|\sqrt{\rho t} \mu_t\|_2^2 + C_{\rho^* T} \|\mathbf{u}\|_{\infty}^2 \|\nabla \mu\|_2^2+\varepsilon \|\nabla\sqrt{t} \mu_t\|_2^2 + C_{\rho^* T} \|\mathbf{u}\|_4^2 \|\mu\|_4^2.
\end{split}
\end{equation}
For $L_{7}$, by \( \rho_t = - \mathbf{u} \cdot \nabla \rho \), we have
\begin{align}
L_7 &= -\int_{\Omega} t \, \text{div}(\rho \mathbf{u}) (\phi^3 - \phi) \mu_t dx = \int_{\Omega} \rho \mathbf{u} \cdot\nabla \bigl( t (\phi^3 - \phi) \mu_t \bigr) dx \\ \notag
\quad& = \int_{\Omega} 3t\rho \mathbf{u} \phi^2 \cdot\nabla \phi  \mu_t dx - \int_{\Omega} t\rho\mathbf{u} \cdot \nabla \phi \mu_t dx + \int_{\Omega}t \rho \mathbf{u}  (\phi^3 - \phi)\cdot \nabla \mu_t dx\\&\triangleq \sum_{i=1}^{3} L_{7i}\notag.
\end{align}
It follows from H\"{o}lder's and Young's inequalities that
\begin{align*}
L_{71} &\leq C_{\rho^* T} \|\mathbf{u}\|_{\infty} \|\phi^2\|_4 \|\nabla \phi\|_4 \|\sqrt{\rho t} \mu_t\|_2\leq \varepsilon \|\sqrt{\rho t} \mu_t\|_2^2 + C_{\rho^* T}\|\mathbf{u}\|_{\infty}^2 \|\phi\|_8^4 \|\nabla \phi\|_4^2, \\
L_{72} &\leq C_{\rho^* T} \|\sqrt{\rho t} \mu_t\|_2 \|\nabla \phi\|_4 \|\mathbf{u}\|_4 \leq \varepsilon \|\sqrt{\rho t} \mu_t\|_2^2 + C_{\rho^* T} \|\nabla \phi\|_4^2 \|\mathbf{u}\|_4^2, \\
L_{73} &\leq C_{\rho^* T} \|\nabla\sqrt{t} \mu_t\|_2 \left( \|\phi^3\|_4 + \|\phi\|_4 \right) \|\mathbf{u}\|_4 \leq \varepsilon \|\nabla\sqrt{t} \mu_t\|_2^2 + C_{\rho^* T} \left( \|\phi\|_{12}^3 + \|\phi\|_4 \right)^2 \|\mathbf{u}\|_4^2.
\end{align*}
For $L_8$, by H\"{o}lder's and Young's inequalities, we have
\begin{align}
L_8 &\leq C_{\rho^* T} \|\sqrt{\rho t} \mu_t\|_2 \|\phi_t\|_2 \|\phi\|_{\infty}^2 + C_{\rho^* T} \|\sqrt{\rho t} \mu_t\|_2 \|\phi_t\|_2
\\ \notag
&\leq \varepsilon \|\sqrt{\rho_t} \mu_t\|_2^2 + C_{\rho^* T}\left( \|\phi\|_{\infty}^4 + 1 \right) \|\phi_t\|_2^2.
\end{align}
Summing  \eqref{eq-31}, \eqref{eq-32}, and \eqref{eq-33} together, and  combining with  estimates of $\sum\limits_{i=1}^{12} I_i$, $\sum\limits_{i=1}^{10} K_i$, and $\sum\limits_{i=1}^{8} L_i$, we finally conclude that
\begin{equation}\label{3.30}
\begin{split}
\frac{d}{dt}& \left( \|\sqrt{\rho_t}\, \mathbf{u}_t\|_2^2 + \|\nabla\sqrt{t}\ \phi_t\|_2^2 + \|\sqrt{\rho_t}\, \phi\, \phi_t\|_2^2+ \|\sqrt{\rho t}\phi_t\|_2^2\right)\\
&\quad+ \int_0^t \rho\tau \|\mu_t\|_2^2 \, dt
+ \int_0^t \nu(\phi)\, \tau \|\nabla \mathbf{u}_t\|_2^2 \, dt
+ \int_0^t \|\nabla\sqrt{\tau}\, \mu_t\|_2^2 \, dt \\
&\leq h_1(t) \left( 1 + \|\sqrt{\rho_t}\, \mathbf{u}_t\|_2^2 + \|\nabla\sqrt{t}\, \phi_t\|_2^2 + \|\sqrt{\rho_t}\, \phi\, \phi_t\|_2^2+\|\sqrt{\rho t}\phi_t\|_2^2 \right),
\end{split}
\end{equation}
where
\begin{align*}
&h_1(t)=
(1 + C + C_{\rho^*} + C_{\rho^*\rho_*T} + C_T)
\Big(\big( \|\phi\|_{12}^3 + \|\phi\|_4 \big)^2
\big( \|\nabla^2 \phi\|_4^2 + \|\nabla \phi\|_4^2 + \|\mathbf{u}\|_4^2 \big) \\
&+ \big( \|\nabla \phi\|_6 + \|\phi\|_\infty^2 \|\nabla\phi\|_6 \big)^2
\big( \|\mathbf{u}\|_6^2 + \|\nabla\phi\|_4^2  + \|\mathbf{u}\|_4^2 \big) + \|\mathbf{u}\|_4^2 \big( \|\nabla^2 \phi\|_6 + \|\nabla \phi\|_{12}^2 \|\phi\|_\infty + \|\nabla^2 \phi\|_6 \|\phi\|_\infty^2 \big)^2 \\
&+ \|\nabla \mathbf{u}\|_2^2 + \| \mu\|_4^2 \|\nabla \phi\|_4^2 + \|\nabla \phi\|_2^2 + \|\phi\|_\infty^4 + \|\nabla \mu\|_2^2 + \|\nabla \phi\|_\infty^2
 + \|\nabla^2 \phi\|_2^2 + \|\nabla \phi\|_4^2 + \|\phi\|_8^4 \|\nabla\phi\|_4^2 \\
&+ \|\nabla \phi\|_4^8 + \big( \|\phi\|_\infty^2 + 1 \big)^2 \|\nabla \phi\|_4^2 + \|\mathbf{u}\|_4^4 + \|\mathbf{u}\|_6^4 + \|\phi\|_6^4 + \|\nabla \mathbf{u}\|_2^2  + \|\mathbf{u}\|_4 \|\nabla \phi\|_4 \|\phi\|_6 + \|\nabla \mu\|_2^2 \\&+ \|\nabla \phi\|_4^2
 + \|\phi\|_\infty^4
+ \|\nabla \phi\|_4^2 + \|\mathbf{u}\|_6^2 + \|\mathbf{u}\|_2^2 + \|\nabla \mathbf{u}\|_2^2 + \|\nabla \mathbf{u}\|_2^{\frac{7}{2}} \\
&+ \|\mathbf{u}\|_4^2 \big( \|\nabla \phi\|_4^2 + \|\nabla^2 \phi\|_4^2 + \|\mu\|_4^2 \big)
+ \|\nabla^2 \phi\|_4^2 \|\mu\|_4^2 + \|\nabla \phi\|_8^2 \|\phi\|_8^2
+ \big( \|\phi\|_\infty^3 + \|\phi\|_\infty\big)^2 \Bigr)\\
&\times\Big( \|\mathbf{u}\|_{\infty}^2 + \|\mathbf{u}\|_{\infty}^4 + \|\mu\|_{\infty}^2 + \|\phi_t\|_2^2 + \| \phi_t\|_4^2 + \| \phi_t\|_6^2 + \|\phi_t\|_2 + \|\mathbf{u}_t\|_2^2 + \|\nabla \phi_t\|_2^2 \\
&+ \|\nabla^2 \mathbf{u}\|_2^2 + \|\nabla \mathbf{u}\|_3^2 + \|\nabla \mu\|_4^2 + \|\nabla \mu\|_6^2 + \|\nabla^2 \mu\|_6^2 + \|\nabla^2 \mu\|_4^2 + \|\nabla \mu\|_{w^{1,6}}^2 \Big)
\end{align*}
only depends on, \( \rho^* \), \( \rho_* \), $T$ and the initial value.
From  Theorem \ref{1.1}, we have 
\begin{align*}
 \mathbf{u}\in& \mathcal{C}([0,T_0];\mathbf{V}_{\sigma})\cap L^2(0,T_0;H^2(\Omega))\cap H^1(0,T_0;\mathbf{H}_{\sigma}),\\
 \phi\in &\mathcal{C}([0,T_0];(W^{2,6}(\Omega))_w)\cap H^1(0,T_0;H^1(\Omega)),\\
 \mu\in &L^{\infty}(0,T_0;H^1(\Omega))\cap L^2(0,T_0;W^{2,6}(\Omega)),
\end{align*}
and the  3-D Gagliardo-Nirenberg interpolation inequality: \( \| u \|_{\infty}^4 \leq \|\nabla u \|_2^2 \| \nabla^2 u \|_2^2 \),
we get
\begin{align*}
&\int_{0}^{t} \| \mathbf{u} \|_{\infty}^{4} dt \leq \int_{0}^{t} \| \nabla \mathbf{u} \|_{2}^{2} \| \nabla ^2 \mathbf{u} \|_{2}^{2} dt \leq C\int_{0}^{t} \|\nabla ^2 \mathbf{u} \|_{2}^{2} dt \leq C_{T},\\
&\int_{0}^{t} \| \mathbf{u} \|_{\infty}^{2} dt \leq \int_{0}^{t} \| \nabla \mathbf{u} \|_{2} \| \nabla ^2 u \|_{2} dt \leq C\int_{0}^{t} \|  \nabla ^2 u \|_{2} dt \leq \int_{0}^{t} \|  \nabla ^2 \mathbf{u} \|_{2}^{2} dt\cdot t^{\frac{1}{2}} \leq C_{T},\\
&\int_{0}^{t} \| \mu \|_{\infty}^{2} dt \leq \int_{0}^{t} \| \nabla \mu \|_{2} \| \nabla ^2 \mu \|_{2} dt \leq \int_{0}^{t} \|  \nabla ^2 \mu \|_{2} dt \leq \int_{0}^{t} \| \nabla ^2 \mu\|_{2}^{2} dt \cdot t^{\frac{1}{2}} \leq C_{T}.
\end{align*}
Then we conclude that $ h_1(t)\in L^1_{loc}(\mathbb{R}^+) $. This along with \eqref{3.30} and Gronwall's inequality yields that \eqref{3D}  holds for \( t \geq 0 \).
\end{proof}
\begin{lemma}
Assume \(d = 2\), and that that a strong solution $(\rho,\mathbf{u},P,\phi,\mu)$ to the initial-boundary value problem   \eqref{NSCH}-\eqref{NSCH1}. Then for all \(0\leq t\leq T\), we have
\begin{equation}\label{2D}
\begin{split}
\|\sqrt{\rho t}\mathbf{u}_t\|_2^2&+\|\sqrt{\rho t}\phi_t\phi\|_2^2+\|\sqrt{ t}\nabla\phi_t\|_2^2+\|\sqrt{\rho t}\phi_t\|_2^2+\int_{0}^{t}\nu(\phi)\tau\|\nabla \mathbf{u}_t\|_2^2d\tau \\
&+\int_{0}^{t}\tau\|\nabla\mu_t\|_2^2d\tau+\int_{0}^{t}\tau\rho\|\mu_t\|_2^2d\tau
\leq\exp\Big(\int_{0}^{t}h_2(\tau)d\tau\Big)-1,
\end{split}
\end{equation}
where
 \begin{align*}
&h_2(t)=(1 + C + C_{\rho^*} + C_{\rho^*\rho_*T} + C_T)
\Big(\big( \|\phi\|_{12}^3 + \|\phi\|_4 \big)^2
\big( \|\nabla^2 \phi\|_4^2 + \|\nabla \phi\|_4^2 + \|\mathbf{u}\|_4^2 \big) \\
&+ \big( \|\nabla \phi\|_6 + \|\phi\|_\infty^2 \|\nabla\phi\|_6 \big)^2
\big( \|\mathbf{u}\|_6^2 + \|\nabla\phi\|_4^2  + \|\mathbf{u}\|_4^2 \big) + \|\mathbf{u}\|_4^2 \big( \|\nabla^2 \phi\|_6 + \|\nabla \phi\|_{12}^2 \|\phi\|_\infty + \|\nabla^2 \phi\|_6 \|\phi\|_\infty^2 \big)^2 \\
&+ \|\nabla \mathbf{u}\|_2^2 + \| \mu\|_4^2 \|\nabla \phi\|_4^2 + \|\nabla \phi\|_2^2 + \|\phi\|_\infty^4 + \|\nabla \mu\|_2^2 + \|\nabla \phi\|_\infty^2
 + \|\nabla^2 \phi\|_2^2 + \|\nabla \phi\|_4^2 + \|\phi\|_8^4 \|\nabla\phi\|_4^2 \\
&+ \|\nabla \phi\|_4^8 + \big( \|\phi\|_\infty^2 + 1 \big)^2 \|\nabla \phi\|_4^2 + \|\mathbf{u}\|_4^4 + \|\mathbf{u}\|_6^4 + \|\phi\|_6^4 + \|\nabla \mathbf{u}\|_2^2  + \|\mathbf{u}\|_4 \|\nabla \phi\|_4 \|\phi\|_6 + \|\nabla \mu\|_2^2 + \|\nabla \phi\|_4^2 \\
&+ \|\phi\|_\infty^4 + \|\nabla \phi\|_4^2 + \|\mathbf{u}\|_6^2 + \|\mathbf{u}\|_2^2 + \|\nabla \mathbf{u}\|_2^2
+ \|\mathbf{u}\|_4^2 \big( \|\nabla \phi\|_4^2 + \|\nabla^2 \phi\|_4^2 + \|\mu\|_4^2 \big)
\\&+ \|\nabla^2 \phi\|_4^2 \|\mu\|_4^2 + \|\nabla \phi\|_8^2 \|\phi\|_8^2
+ \big( \|\phi\|_\infty^3 + \|\phi\|_\infty\big)^2 \Big)\times\Big( \|\mathbf{u}\|_{\infty}^2 + \|\mathbf{u}\|_{\infty}^4 + \|\mu\|_{\infty}^2 + \|\phi_t\|_2^2 \\
&+ \| \phi_t\|_4^2 + \| \phi_t\|_6^2 + \|\phi_t\|_2 + \|\mathbf{u}_t\|_2^2 + \|\nabla \phi_t\|_2^2 +  \|\nabla^2 \mathbf{u}\|_2^2 + \|\nabla \mathbf{u}\|_3^2 + \|\nabla \mu\|_4^2 + \|\nabla \mu\|_6^2\\
& + \|\nabla^2 \mu\|_6^2 + \|\nabla^2 \mu\|_4^2 + \|\nabla \mu\|_{w^{1,6}}^2 \Big)
\end{align*}
is $ h_2(t) \in L^1_{loc}(\mathbb{R}^+)$  only depends on, \( \rho^* \), \( \rho_* \), $T$ and the initial value.
\end{lemma}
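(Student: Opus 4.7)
The plan is to mirror the proof of Lemma \ref{lemma3.2} step by step, since the structure of the inequalities, the definition of $h_2$, and the conclusion are essentially identical to the 3D case. First, I would differentiate \eqref{NSCH}$_2$, \eqref{NSCH}$_4$, and \eqref{NSCH}$_5$ with respect to $t$, multiply each by $\sqrt{t}$, and test with $\sqrt{t}\mathbf{u}_t$, $\sqrt{t}\phi_t$, and $\sqrt{t}\mu_t$ respectively, producing the three basic energy identities analogous to \eqref{eq-31}--\eqref{eq-33}. Summing them together, using Korn's inequality \eqref{korn}, the identity $\rho_t=-\mathbf{u}\cdot\nabla\rho$, the boundary conditions, and the relation $\rho\mu=-\triangle\phi+\rho\Phi'(\phi)$ from \eqref{NSCH}$_5$, one obtains exactly the same splitting into the families $\{I_i\}_{i=1}^{12}$, $\{K_i\}_{i=1}^{10}$, $\{L_i\}_{i=1}^{8}$ as in the 3D proof.

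Next, I would bound each $I_i$, $K_i$, $L_i$ by the same Hölder--Young scheme used in Lemma \ref{lemma3.2}. The only dimension-dependent change occurs in the Sobolev embeddings and interpolation inequalities: in 2D one may invoke Ladyzhenskaya's inequality \eqref{LADY} and Agmon's inequality \eqref{Agmon2d}, together with $H^1(\Omega)\hookrightarrow L^p(\Omega)$ for every $p<\infty$. These replace the $\dot H^1\hookrightarrow L^6$ step used in 3D; the algebraic manipulations that produce terms like $\|\nabla\mathbf{u}\|_2^{7/2}\|\nabla^2\mathbf{u}\|_2^2$ (coming from $I_{31}$) go through without change, since they ultimately rely only on the $L^q$--$L^r$--$L^s$ Hölder triple plus Young's inequality. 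Thus after summation one arrives at the differential inequality
\begin{equation*}
\begin{split}
\frac{d}{dt}&\big(\|\sqrt{\rho t}\mathbf{u}_t\|_2^2+\|\nabla\sqrt{t}\phi_t\|_2^2+\|\sqrt{\rho t}\phi\phi_t\|_2^2+\|\sqrt{\rho t}\phi_t\|_2^2\big)\\
&+\int_0^t\!\rho\tau\|\mu_t\|_2^2d\tau+\int_0^t\!\nu(\phi)\tau\|\nabla\mathbf{u}_t\|_2^2d\tau+\int_0^t\!\|\nabla\sqrt{\tau}\mu_t\|_2^2d\tau\\
&\le h_2(t)\big(1+\|\sqrt{\rho t}\mathbf{u}_t\|_2^2+\|\nabla\sqrt{t}\phi_t\|_2^2+\|\sqrt{\rho t}\phi\phi_t\|_2^2+\|\sqrt{\rho t}\phi_t\|_2^2\big),
\end{split}
\end{equation*}
with $h_2$ as defined in the statement.

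The remaining and arguably delicate step is to verify $h_2\in L^1_{\mathrm{loc}}(\mathbb{R}^+)$. Here one exploits the 2D regularity from Theorem \ref{1.1}(ii), in particular $\mathbf{u}\in L^2(0,T;H^2)\cap\mathcal{C}([0,T];\V_\sigma)$, $\phi\in\mathcal{C}([0,T];(W^{2,q})_w)$ for every $q<\infty$, and $\mu\in L^\infty(0,T;H^1)\cap L^2(0,T;W^{2,q})$. Combining these with \eqref{LADY}, \eqref{Agmon2d}, \eqref{DL4} and the standard interpolation $\|u\|_\infty^2\le C\|u\|_{H^1}\|u\|_{H^2}$ valid in 2D, one checks that every ingredient appearing in $h_2$ (for instance $\|\mathbf{u}\|_\infty^4$, $\|\mu\|_\infty^2$, $\|\nabla^2\mu\|_6^2$, $\|\nabla\phi\|_\infty^2$, etc.) is locally time-integrable. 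The key bookkeeping is that quantities like $\int_0^t\|\mathbf{u}\|_\infty^4d\tau\le\int_0^t\|\nabla\mathbf{u}\|_2^2\|\nabla^2\mathbf{u}\|_2^2d\tau\le C_T$ because $\|\nabla\mathbf{u}\|_2$ is bounded uniformly on $[0,T]$ and $\|\nabla^2\mathbf{u}\|_2^2\in L^1(0,T)$.

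Finally, the conclusion \eqref{2D} follows from Gronwall's inequality applied to the differential inequality above, exactly as in the last step of the 3D proof. The main obstacle I anticipate is not any individual estimate but the sheer bookkeeping of the dozens of subterms $I_{ij}$, $K_{ij}$, $L_{ij}$; since the 2D Sobolev embeddings are strictly better than in 3D, no new structural difficulty arises and the proof can essentially be read off from Lemma \ref{lemma3.2} with the obvious notational changes.
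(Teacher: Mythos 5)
Your proposal matches the paper's proof: the paper itself obtains the 2D lemma by repeating the 3D argument of Lemma \ref{lemma3.2} verbatim, only re-estimating the handful of terms $I_{31}$, $I_4$, $I_6$, $I_8$, $K_{43}$, $K_9$ with 2D interpolation (Ladyzhenskaya-type) inequalities, verifying $h_2\in L^1_{loc}$ from Theorem \ref{1.1}(ii), and closing with Gronwall exactly as you describe. One small correction: the stated $h_2$ contains no $\|\nabla \mathbf{u}\|_2^{7/2}$ term precisely because in 2D the term $I_{31}$ is \emph{not} handled as in 3D (whose Gagliardo--Nirenberg exponents are dimension-specific) but via $\|\nabla\mathbf{u}\|_4^4\leq C\|\nabla\mathbf{u}\|_2^2\|\nabla^2\mathbf{u}\|_2^2$, so your claim that this particular manipulation ``goes through without change'' should be replaced by the 2D estimate.
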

\begin{proof}
Compared with the proof of the 3-D case, we here only show some different parts for \(I_{31}$, $I_4$, $I_6$, $I_8 \), \( K_{43} \) and \( K_{9} \) in what follows. Combining H\"{o}lder's inequality and Sobolev embedding, we have
\begin{align*} \notag
I_{31} &= \int_{\mathbb{T}^2} \sqrt{\rho t} |\mathbf{u}| \|\nabla \mathbf{u}\| \|\nabla \mathbf{u}\| \sqrt{\rho t} \mathbf{u}_t dx \leq \|\mathbf{u}\|_{\infty}^2 \| \sqrt{\rho t} \mathbf{u}_t \|_2^2 + C _{T \rho^*} \| \nabla \mathbf{u} \|_4^4,\\
&\leq \|\mathbf{u}\|_{\infty}^2 \| \sqrt{\rho t} \mathbf{u}_t \|_2^2 + C_{ T \rho^*} \| \nabla \mathbf{u} \|_2^2\| \mathbf{u} \|_2^2,\\ \notag
I_4 &\leq C_{\rho*T}\| \nabla \mathbf{u} \|_2 \| \sqrt{t} \mathbf{u}_t \|_4^2
\leq C_{\rho*\rho_*T} \| \nabla \mathbf{u} \|_2 \| \sqrt{\rho t} \mathbf{u}_t \|_2 \| \sqrt{t} \nabla \mathbf{u}_t \|_2 \\ &\leq \varepsilon \| \sqrt{t} \nabla \mathbf{u}_t \|_2^2 + C_{\rho*\rho_*T} \| \nabla \mathbf{u} \|_2^2 \| \sqrt{\rho t} \mathbf{u}_t \|_2^2,\\ \notag
I_6 &\leq \left|- \int _{\Omega}\nu'(\phi) \phi_t D \mathbf{u} \cdot \nabla t\mathbf{u}_t dx \right| \leq C \| \sqrt{t} \nabla \mathbf{u}_t \|_2 \|\sqrt t \phi_t \|_4 \| D \mathbf{u} \|_4\\
&\leq \varepsilon \| \sqrt{t} \nabla \mathbf{u}_t \|_2^2 + C_T\| \nabla \mathbf{u} \|_2\| \nabla^2 \mathbf{u} \|_2 \|\nabla\sqrt t\phi_t \|_2\|\phi_t \|_2\\ \notag
&\leq \varepsilon \| \sqrt{t} \nabla \mathbf{u}_t \|_2^2 + C_T\| \nabla \mathbf{u} \|_2^2\| \nabla^2 \mathbf{u} \|_2^2+C \|\nabla\sqrt t\phi_t \|_2^2\|\phi_t \|_2^2\\
I_8 &\leq \left| \int_{\Omega} ( \|\sqrt{t}\rho \, \mu_t \, \nabla \phi) \, (\sqrt{t} \, \mathbf{u}_t) \, dx \right| \leq C_{\rho*} \, \|\sqrt{\rho t} \, \mu_t\|_2 \, \|\nabla \, \phi\|_4 \, \|\sqrt{t}\mathbf{u}_t\|_4\\ \notag
&\leq C_{\rho*T} \, \|\sqrt{\rho t} \, \mu_t\|_2 \, \|\nabla \, \phi\|_4 \, \|\nabla\sqrt{t}u_t\|_2^\frac{1}{2}\|\mathbf{u}_t\|_2^\frac{1}{2} \leq \varepsilon\|\nabla \sqrt{t} \, \mathbf{u}_t\|_2^2 + C_{\rho*T} \, \|\nabla \phi\|_4^2 \, \|\mathbf{u}_t\|_2^2+\varepsilon\|\sqrt{\rho t} \, \mu_t\|_2^2,
\end{align*}
\begin{align*}
K_{43}&=3\int_{\Omega}|\sqrt{t}\phi_t|^2\rho\phi\phi_t dx
\leq C_{\rho*T}\|\sqrt{\rho t}\phi_t\phi\|_2\|\phi_t\|_4\|\phi_t\|_4\\ \notag
&\leq C_{\rho*T}\|\phi_t\|_4^2\|\sqrt{\rho t}\phi_t\phi\|_2^2+ C\|\phi_t\|_4^2,\\
K_9& \leq C_{\rho^* T} \|\sqrt{\rho_t} \mathbf{u}_t\|_2 \|\sqrt{t} \mu_t\|_4 \|\nabla \phi\|_4 \leq C_{\rho^* T} \|\sqrt{\rho_t} \mathbf{u}_t\|_2 \|\sqrt{\rho t} \mu_t\|_2^\frac{1}{2} \|\nabla\sqrt{ t} \mu_t\|_2^\frac{1}{2} \|\nabla \phi\|_4\\ \notag
&\leq C_{\rho^* T} \|\nabla \phi\|_4^2 \|\sqrt{\rho_t} u_t\|_2^2 + \varepsilon \|\nabla\sqrt{t} \mu_t\|_2^2+\varepsilon \|\sqrt{\rho t} \mu_t\|_2^2.
\end{align*}
Then summing \eqref{eq-31}, \eqref{eq-32}, and \eqref{eq-33} together, and combining with   estimates of $\sum\limits_{i=1}^{12} I_i$, $\sum\limits_{i=1}^{10} K_i$, and $\sum\limits_{i=1}^{8} L_i$, we deduce that
\begin{equation}\label{3.38}
\begin{split}
\frac{d}{dt} &\left( \|\sqrt{\rho_t}\, \mathbf{u}_t\|_2^2 + \|\nabla\sqrt{t}\ \phi_t\|_2^2 + \|\sqrt{\rho_t}\, \phi\, \phi_t\|_2^2+\|\sqrt{\rho t}\phi_t\|_2^2 \right)
+ \int_0^t \rho\tau \|\mu_t\|_2^2 \, dt
\\&+ \int_0^t \nu(\phi)\, \tau \|\nabla \mathbf{u}_t\|_2^2 \, dt
+ \int_0^t \|\nabla\sqrt{\tau}\, \mu_t\|_2^2 \, dt\\
&\leq h_2(t) \left( 1 + \|\sqrt{\rho_t}\, \mathbf{u}_t\|_2^2 + \|\nabla\sqrt{t}\, \phi_t\|_2^2 + \|\sqrt{\rho_t}\, \phi\, \phi_t\|_2^2+\|\sqrt{\rho t}\phi_t\|_2^2 \right),
\end{split}
\end{equation}
where
\begin{align*}
&h_2(t)=(1 + C + C_{\rho^*} + C_{\rho^*\rho_*T} + C_T)
\Big(\big( \|\phi\|_{12}^3 + \|\phi\|_4 \big)^2
\big( \|\nabla^2 \phi\|_4^2 + \|\nabla \phi\|_4^2 + \|\mathbf{u}\|_4^2 \big) \\
&+ \big( \|\nabla \phi\|_6 + \|\phi\|_\infty^2 \|\nabla\phi\|_6 \big)^2
\big( \|\mathbf{u}\|_6^2 + \|\nabla\phi\|_4^2  + \|\mathbf{u}\|_4^2 \big) + \|\mathbf{u}\|_4^2 \big( \|\nabla^2 \phi\|_6 + \|\nabla \phi\|_{12}^2 \|\phi\|_\infty + \|\nabla^2 \phi\|_6 \|\phi\|_\infty^2 \big)^2 \\
&+ \|\nabla \mathbf{u}\|_2^2 + \| \mu\|_4^2 \|\nabla \phi\|_4^2 + \|\nabla \phi\|_2^2 + \|\phi\|_\infty^4 + \|\nabla \mu\|_2^2 + \|\nabla \phi\|_\infty^2
 + \|\nabla^2 \phi\|_2^2 + \|\nabla \phi\|_4^2 + \|\phi\|_8^4 \|\nabla\phi\|_4^2 \\
&+ \|\nabla \phi\|_4^8 + \big( \|\phi\|_\infty^2 + 1 \big)^2 \|\nabla \phi\|_4^2 + \|\mathbf{u}\|_4^4 + \|\mathbf{u}\|_6^4 + \|\phi\|_6^4 + \|\nabla \mathbf{u}\|_2^2  + \|\mathbf{u}\|_4 \|\nabla \phi\|_4 \|\phi\|_6 + \|\nabla \mu\|_2^2 + \|\nabla \phi\|_4^2 \\
&+ \|\phi\|_\infty^4 + \|\nabla \phi\|_4^2 + \|\mathbf{u}\|_6^2 + \|\mathbf{u}\|_2^2 + \|\nabla \mathbf{u}\|_2^2
+ \|\mathbf{u}\|_4^2 \big( \|\nabla \phi\|_4^2 + \|\nabla^2 \phi\|_4^2 + \|\mu\|_4^2 \big)
\\&+ \|\nabla^2 \phi\|_4^2 \|\mu\|_4^2 + \|\nabla \phi\|_8^2 \|\phi\|_8^2
+ \big( \|\phi\|_\infty^3 + \|\phi\|_\infty\big)^2 \Big)\times\Big( \|\mathbf{u}\|_{\infty}^2 + \|\mathbf{u}\|_{\infty}^4 + \|\mu\|_{\infty}^2 + \|\phi_t\|_2^2 \\
&+ \| \phi_t\|_4^2 + \| \phi_t\|_6^2 + \|\phi_t\|_2 + \|\mathbf{u}_t\|_2^2 + \|\nabla \phi_t\|_2^2 +  \|\nabla^2 \mathbf{u}\|_2^2 + \|\nabla \mathbf{u}\|_3^2 + \|\nabla \mu\|_4^2 + \|\nabla \mu\|_6^2\\
& + \|\nabla^2 \mu\|_6^2 + \|\nabla^2 \mu\|_4^2 + \|\nabla \mu\|_{w^{1,6}}^2 \Big)
\end{align*}
only depends on, \( \rho^* \), $T$ and the initial value.
Combining with  Theorem \ref{1.1}, and the 2-D Ladyzhenskaya inequality: \( \| \mathbf{u} \|_{\infty}^4 \leq \| \mathbf{u} \|_2^2 \| \nabla^2 \mathbf{u} \|_2^2 \) yields that
$h_2(t) \in L^1_{loc}(\mathbb{R}^+)$. Then applying Gronwall's inequality to \eqref{3.38} implies  that \eqref{2D} holds for \( t \geq 0 \).
\end{proof}
\section{The estimate  for quantity $\int_{0}^{T}\|(\nabla \mathbf{u}, \nabla \mu, \nabla\phi)(\tau)\|_{L^{\infty}}d\tau$} In  this section, our main  goal is to achieve the bound  estimate  for quantity $\int_{0}^{T}\|(\nabla \mathbf{u}, \nabla \mu, \nabla\phi)(\tau)\|_{L^{\infty}}d\tau$ in terms of the initial data and of $T$ by performing the shift of integrability method. This is given by the following two lemmas in two and three dimensions, respectively.
\begin{lemma}\label{3DDD}
Assume \( d = 3 \), then for all \( T > 0 \), \( p \in [2,\infty] \), we have
\begin{equation}\label{3DD}
\left\lVert (\nabla^2 \sqrt{t} \mathbf{u}, \nabla^2 \sqrt{t}\phi, \nabla^2 \sqrt{t} \mu) \right\rVert_{L^p(0,T;L^r)} + \left\lVert \nabla \sqrt{t} P \right\rVert_{L^p(0,T;L^r)} \leq C_{0,T}, \quad \text{for} \quad 2 \leq r \leq \frac{6p}{3p - 4},
\end{equation}
where \( C_{0,T} \) depends only on \( \rho^* \), \( p \) and the initial value. Furthermore, for \( s \in \left[1,\frac{4}{3}\right) \), then for some \( \theta > 0 \), we have
\begin{equation}
\int_0^T \left( \left\lVert \nabla \mathbf{u} \right\rVert_\infty^s + \left\lVert \nabla \mu\right\rVert_\infty^s + \left\lVert \nabla\phi \right\rVert_\infty^s \right) dt \leq C_{0,T} T^\theta.\label{eq-s}
\end{equation}
\end{lemma}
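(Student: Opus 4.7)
The plan is to view each of the three second-order equations in \eqref{NSCH} as an elliptic problem at fixed time, with the time derivatives and nonlinear terms placed on the right-hand side, and then invoke stationary Stokes/Poisson regularity to trade one time derivative for two spatial derivatives. Concretely, I rewrite \eqref{NSCH} as
\[
\begin{cases}
-\mathrm{div}(\nu(\phi)D\mathbf{u})+\nabla P=-\rho\mathbf{u}_t-\rho(\mathbf{u}\cdot\nabla)\mathbf{u}-\mathrm{div}(\nabla\phi\otimes\nabla\phi),\quad \mathrm{div}\,\mathbf{u}=0,\\
-\Delta\phi=\rho\mu-\rho\Phi_0'(\phi),\\
-\Delta\mu=-\rho\phi_t-\rho\mathbf{u}\cdot\nabla\phi,
\end{cases}
\]
with $\mathbf{u}=0$, $\partial_{\mathbf{n}}\phi=\partial_{\mathbf{n}}\mu=0$ on $\partial\Omega$. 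Multiplying through by $\sqrt t$ and applying the $L^r$-theory for the stationary Stokes system with variable viscosity and for the Neumann--Laplace problem yields the pointwise-in-$t$ bound
\[
\|\sqrt t\,\nabla^2\mathbf{u}\|_{L^r}+\|\sqrt t\,\nabla P\|_{L^r}+\|\sqrt t\,\nabla^2\phi\|_{L^r}+\|\sqrt t\,\nabla^2\mu\|_{L^r}\lesssim \|\sqrt t\,(\mathbf{u}_t,\phi_t,\mu_t)\|_{L^r}+\mathcal{R}(t),
\]
where $\mathcal{R}(t)$ collects lower-order nonlinear terms controlled uniformly on $(0,T)$ via Theorem \ref{1.1}.

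By Lemma \ref{lemma3.2} (together with the lower bound on $\rho$), $\sqrt t\,\mathbf{u}_t\in L^\infty(0,T;L^2)$ while $\sqrt t\,\nabla\mathbf{u}_t\in L^2(0,T;L^2)$, so the Sobolev embedding $H^1\hookrightarrow L^6$ gives $\sqrt t\,\mathbf{u}_t\in L^2(0,T;L^6)$; the same holds for $\sqrt t\,\mu_t$, and analogously $\sqrt t\,\phi_t\in L^\infty(0,T;L^6)$. Real interpolation between the endpoints $L^\infty L^2$ and $L^2 L^6$ places each of $\sqrt t\,(\mathbf{u}_t,\phi_t,\mu_t)$ in $L^p(0,T;L^r)$ for every $(p,r)$ on the scaling line $\tfrac{2}{p}+\tfrac{3}{r}=\tfrac{3}{2}$, i.e.\ $r=\tfrac{6p}{3p-4}$, and H\"older's inequality in space covers the full range $2\le r\le\tfrac{6p}{3p-4}$ stated in \eqref{3DD}.

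To derive \eqref{eq-s}, I fix $(p,r)$ with $r>3$ and $p<4$; since $\tfrac{6p}{3p-4}>3$ is equivalent to $p<4$, such a pair is admissible. The Sobolev embedding $W^{1,r}(\Omega)\hookrightarrow L^\infty(\Omega)$ for $r>3$ then yields $\sqrt t\,\|(\nabla\mathbf{u},\nabla\mu,\nabla\phi)\|_{L^\infty}\in L^p(0,T)$. Writing $\|\nabla\mathbf{u}\|_\infty=t^{-1/2}(\sqrt t\,\|\nabla\mathbf{u}\|_\infty)$ and applying H\"older's inequality in time with conjugate exponents $(p/s,p/(p-s))$ gives
\[
\int_0^T\|\nabla\mathbf{u}\|_\infty^s\,dt\le\Big(\int_0^T t^{-\frac{sp}{2(p-s)}}\,dt\Big)^{\!\frac{p-s}{p}}\Big(\int_0^T\|\sqrt t\,\nabla\mathbf{u}\|_\infty^p\,dt\Big)^{\!\frac{s}{p}}\le C_{0,T}T^\theta,
\]
whenever $s<\tfrac{2p}{p+2}$; letting $p\uparrow 4$ exhausts the range $s\in[1,4/3)$, and the same reasoning handles $\nabla\mu$ and $\nabla\phi$.

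The main obstacle will be the stationary Stokes estimate with the merely bounded-and-Lipschitz viscosity $\nu(\phi)$: the commutator $\nabla\nu(\phi)\cdot D\mathbf{u}$ must be absorbed in $L^r$, which requires the $W^{2,6}$-control of $\phi$ from Theorem \ref{1.1} together with a Gagliardo--Nirenberg interpolation of $D\mathbf{u}$; only once this elliptic step is secured does the argument reduce to a routine combination of Sobolev embedding and the H\"older-in-time shift described above.
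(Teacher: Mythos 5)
Your proposal follows essentially the same route as the paper: rewrite the momentum, $\phi$- and $\mu$-equations as a time-weighted stationary Stokes/Neumann--Laplace system, feed in the weighted bounds of Lemma \ref{lemma3.2} to place $\sqrt{t}\,\mathbf{u}_t,\sqrt{t}\,\phi_t$ in $L^\infty(0,T;L^2)\cap L^2(0,T;L^6)$, interpolate to the scaling line $r=\tfrac{6p}{3p-4}$, and then obtain \eqref{eq-s} from $W^{1,r}\hookrightarrow L^\infty$ ($r>3$, $2<p<4$) and H\"older in time with the weight $t^{-s/2}$, exactly as in the paper. Two minor inaccuracies: your claim that $\sqrt{t}\,\mu_t$ enjoys the same $L^\infty(0,T;L^2)$ bound is not provided by Lemma \ref{lemma3.2} (only $\sqrt{t}\,\mu_t\in L^2(0,T;H^1)$ is), but this is harmless since $\mu_t$ never actually appears on the elliptic right-hand sides; and the nonlinear terms are not ``controlled uniformly on $(0,T)$'' pointwise in time --- placing $\sqrt{t}\,\rho\mathbf{u}\cdot\nabla\mathbf{u}$, $\sqrt{t}\,\rho\mu\nabla\phi$, $\sqrt{t}\,\nabla\phi\nabla^2\phi$, etc.\ in $L^p(0,T;L^r)$ for the full range requires the $L^2(0,T;L^3)\cap L^\infty(0,T;L^{3/2})$ interpolation combined with the Gagliardo--Nirenberg $L^4(0,T;L^\infty)$ bounds on $\mathbf{u},\phi,\mu$, which is where the paper spends most of its effort and which your sketch delegates to $\mathcal{R}(t)$.
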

\begin{proof}
From \eqref{NSCH}, we have
\begin{align}
\begin{cases}
&-\mathrm{div}\left( \sqrt{t}\, v(\phi)\, D\mathbf{u} \right) + \nabla \sqrt{t}\, p = -\mathrm{div}\sqrt{t}\, (\nabla \phi \otimes \nabla \phi) - \rho \sqrt{t}\, (\mathbf{u}_t + \mathbf{u} \cdot \nabla \mathbf{u}),\label{eq-n} \\
&\mathrm{div}\sqrt{t}\, \mathbf{u} = 0,\\
&\Delta \sqrt{t}\mu = \rho \sqrt{t}\, \phi_t + \rho \sqrt{t}\, \mathbf{u} \nabla \phi, \\
&-\Delta \sqrt{t}\, \phi + \rho \sqrt{t}\, \Phi'(\phi) = \rho \sqrt{t}\, \mu.
\end{cases}
\end{align}
Obviously,
\begin{equation*}
\begin{split}
\Phi'(\phi)&=\phi^3-\phi,\\
-\operatorname{div}(\nabla \phi \otimes \nabla \phi) &= -\Delta \phi \nabla \phi - \nabla \Big( \frac{1}{2} |\nabla \phi|^2 \Big) \\
&= \rho \mu \nabla \phi - \rho \Phi'(\phi) \nabla \phi - \nabla \Big( \frac{1}{2} |\nabla \phi|^2 \Big) \\
&= \rho \mu \nabla \phi - \rho \nabla \Phi(\phi) - \nabla \Big( \frac{1}{2} |\nabla \phi|^2 \Big)\\
&= \rho \mu \nabla \phi - \rho (\phi^3-\phi)\nabla\phi - \nabla\phi \nabla^2\phi.
\end{split}
\end{equation*}
From Theorem \ref{1.1}, \eqref{NSCH2} and \eqref{3D}, we get
\[
\rho\sqrt{t}\mathbf{u}_t, \rho\sqrt{t}\phi_t \in L^\infty(0,T;L^2) \cap L^2(0,T;H^1),
\]
which together with $\dot{H}^1(\Omega) \hookrightarrow L^6(\Omega)$, yields that
\[
\rho\sqrt{t}\mathbf{u}_t, \rho\sqrt{t}\phi_t \in L^\infty(0,T;L^2) \cap L^2(0,T;L^q) \quad \text{with} \quad q \leq 6.
\]
This along with the interpolation inequality  gives rise to
\begin{equation*}
\|\rho\sqrt{t}\mathbf{u}_t\|_{L^p(0,T;L^r)} \leq \|\rho\sqrt{t}\mathbf{u}_t\|_{L^\infty(0,T;L^2)}^{1 - \frac{2}{p}} \|\rho\sqrt{t}\mathbf{u}_t\|_{L^2(0,T;L^q)}^{\frac{2}{p}},
\end{equation*}
and
\begin{equation*}
\|\rho\sqrt{t}\phi_t\|_{L^p(0,T;L^r)} \leq \|\rho\sqrt{t}\phi_t\|_{L^\infty(0,T;L^2)}^{1 - \frac{2}{p}} \|\rho\sqrt{t}\phi_t\|_{L^2(0,T;L^q)}^{\frac{2}{p}},
\end{equation*}
with \(\frac{1}{r} = \frac{p - 2}{2p} + \frac{2}{pq}\). Here, when \(q\) takes \(6\), then \(r\) may take the maximum value of \(\frac{6p}{3p - 4}\). Then we readily get
\begin{equation}
\|(\rho\sqrt{t}\mathbf{u}_t, \rho\sqrt{t}\phi_t)\|_{L^p(0,T;L^r)} \leq C_{0,T} \quad \text{for} \quad p \in [2,\infty], \quad r \in \Big[2, \frac{6p}{3p - 4}\Big].\label{eq-11}
\end{equation}
It follows from Theorem \ref{1.1} and \eqref{NSCH2}, that
\[
\rho\sqrt{t}\phi, \rho\sqrt{t}\mu, \rho\sqrt{t}\phi^3 \in L^\infty(0,T;L^2) \cap L^2(0,T;H^1).
\]
Then from $\dot{H}^1(\Omega) \hookrightarrow L^6(\Omega)$, we have
\[
\rho\sqrt{t}\phi, \rho\sqrt{t}\mu, \rho\sqrt{t}\phi^3 \in L^\infty(0,T;L^2) \cap L^2(0,T;L^q) \quad \text{with} \quad q \leq 6.
\]
Employing  interpolation inequality, we conclude that
\begin{align*}
\|\rho\sqrt{t}\phi\|_{L^p(0,T;L^r)} \leq \|\rho\sqrt{t}\phi\|_{L^\infty(0,T;L^2)}^{1 - \frac{2}{p}} \|\rho\sqrt{t}\phi\|_{L^2(0,T;L^q)}^{\frac{2}{p}},\\
\|\rho\sqrt{t}\mu\|_{L^p(0,T;L^r)} \leq \|\rho\sqrt{t}\mu\|_{L^\infty(0,T;L^2)}^{1 - \frac{2}{p}} \|\rho\sqrt{t}\mu\|_{L^2(0,T;L^q)}^{\frac{2}{p}},\\
\|\rho\sqrt{t}\phi^3\|_{L^p(0,T;L^r)} \leq \|\rho\sqrt{t}\phi^3\|_{L^\infty(0,T;L^2)}^{1 - \frac{2}{p}} \|\rho\sqrt{t}\phi^3\|_{L^2(0,T;L^q)}^{\frac{2}{p}}
\end{align*}
with \(\frac{1}{r} = \frac{p - 2}{2p} + \frac{2}{pq}\). Here, when \(q\) takes \(6\), then \(r\) may take the maximum value of \(\frac{6p}{3p - 4}\). Then we have
\begin{equation}
\|(\rho\sqrt{t}\phi, \rho\sqrt{t}\mu,\rho\sqrt{t}\phi^3)\|_{L^p(0,T;L^r)} \leq C_{0,T} \quad \text{for} \quad p \in [2,\infty], \quad r \in \Big[2, \frac{6p}{3p - 4}\Big].\label{eq-12}
\end{equation}
According to Theorem \ref{1.1}, we have $\nabla \mathbf{u}, \nabla \phi \in L^\infty(0,T;L^2) \cap L^2(0,T;H^1)$, and from $\dot{H}^1(\Omega) \hookrightarrow L^6(\Omega)$, we get $\nabla \mathbf{u}, \nabla \phi \in L^\infty(0,T;L^2) \cap L^2(0,T;L^q)\) with \(q \leq 6$. By interpolation inequality, we obtain
\begin{equation*}
\|\nabla \mathbf{u}\|_{L^p(0,T;L^r)} \leq \|\nabla \mathbf{u}\|_{L^\infty(0,T;L^2)}^{1 - \frac{2}{p}} \|\nabla \mathbf{u}\|_{L^2(0,T;L^q)}^{\frac{2}{p}},
\end{equation*}
\begin{equation*}
\|\nabla \phi\|_{L^p(0,T;L^r)} \leq \|\nabla \phi\|_{L^\infty(0,T;L^2)}^{1 - \frac{2}{p}} \|\nabla \phi\|_{L^2(0,T;L^q)}^{\frac{2}{p}}
\end{equation*}
with \(\frac{1}{r} = \frac{p - 2}{2p} + \frac{2}{pq}\). Then we have
\begin{equation*}
\|(\nabla \mathbf{u}, \nabla \phi)\|_{L^p(0,T;L^r)} \leq C_{0,T} \quad \text{for} \quad p \in [2,\infty], \quad r \in \Big[2, \frac{6p}{3p - 4}\Big],
\end{equation*}
which means that
\begin{equation}
\nabla \mathbf{u}, \nabla \phi \in L^4(0, T; L^3).\label{eq-15}
\end{equation}
On the other hand, using Gagliardo-Nirenberg interpolation inequality $\|v\|_{L^\infty}^4 \leq C\|\nabla v\|_{L^2}^2 \|\nabla^2 v\|_{L^2}^2$ leads to
\begin{align*}
\|\mathbf{u}\|_{L^4(0,T;L^\infty)} &\leq \|\nabla \mathbf{u}\|_{L^\infty(0,T;L^2)}^{\frac{1}{2}} \|\nabla^2 \mathbf{u}\|_{L^2(0,T;L^2)}^{\frac{1}{2}},\label{eq-16}  \\
\|\phi\|_{L^4(0,T;L^\infty)} &\leq \|\nabla \phi\|_{L^\infty(0,T;L^2)}^{\frac{1}{2}} \|\nabla^2 \phi\|_{L^2(0,T;L^2)}^{\frac{1}{2}},\\
\|\phi^3\|_{L^4(0,T;L^\infty)}&=\|\phi\|^3_{L^{12}(0,T;L^\infty)}\leq \|\nabla \phi\|_{L^\infty(0,T;L^2)}^{\frac{1}{2}} \|\nabla^2 \phi\|_{L^{12}(0,T;L^2)}^{\frac{3}{2}},\\
\|\mu\|_{L^4(0,T;L^\infty)} &\leq \|\nabla \mu\|_{L^\infty(0,T;L^2)}^{\frac{1}{2}} \|\nabla^2 \mu\|_{L^2(0,T;L^2)}^{\frac{1}{2}}.
\end{align*}
Thanks to Theorem  \ref{1.1}, we conclude that
\begin{align}
\sqrt{t}\rho \mathbf{u}, \sqrt{t}\rho \phi,\sqrt{t}\rho |\phi|^3,\sqrt{t}\rho \mu\in L^4(0,T;L^\infty).
\end{align}
Using H\"{o}lder's inequality, and then combining with \eqref{eq-15} and \eqref{eq-16}, yield that
\begin{align*}
\sqrt{t}\rho \mathbf{u} \cdot \nabla \mathbf{u}, \sqrt{t}\rho \mathbf{u}\cdot  \nabla \phi,\sqrt{t}\rho |\phi|^3\nabla \phi,\sqrt{t}\rho \mu  \nabla \phi,\sqrt{t}\rho\phi\nabla \phi \in L^2(0,T;L^3).
\end{align*}
Similarly, we also have
\begin{align*}
\sqrt{t}\rho \mathbf{u}, \sqrt{t}\rho \phi,\sqrt{t}\rho |\phi|^3,\sqrt{t}\rho\mu\in L^\infty(0,T;L^6);
\nabla \mathbf{u}, \cdot\nabla \phi \in L^\infty(0,T;L^2),
\end{align*}
which implies that
\begin{equation*}
\sqrt{t}\rho \mathbf{u}\cdot \nabla\mathbf{u}, \sqrt{t}\rho \mathbf{u} \nabla \phi ,\sqrt{t}\rho |\phi|^3\nabla \phi,\sqrt{t}\rho \mu \nabla \phi,\sqrt{t}\rho\phi\nabla \phi\in L^\infty(0, T; L^{3/2}).
\end{equation*}
It follows from the interpolation inequality and H\"{o}lder's inequality that
\begin{align*}
\|\sqrt{t}\rho \mathbf{u} \cdot \nabla \mathbf{u}\|_{L^p(0,T;L^r)} &\leq \|\sqrt{t}\rho \mathbf{u} \cdot \nabla \mathbf{u}\|_{L^2(0,T;L^3)}^{\frac{2}{p}} \|\sqrt{t}\rho \mathbf{u} \cdot \nabla \mathbf{u}\|_{L^\infty(0,T;L^{3/2})}^{1 - \frac{2}{p}}, \\
\|\sqrt{t}\rho \mathbf{u} \cdot\nabla \phi\|_{L^p(0,T;L^r)} &\leq \|\sqrt{t}\rho \mathbf{u}\cdot\nabla \phi\|_{L^2(0,T;L^3)}^{\frac{2}{p}} \|\sqrt{t}\rho \mathbf{u} \cdot\nabla \phi\|_{L^\infty(0,T;L^{3/2})}^{1 - \frac{2}{p}},\\
\|\sqrt{t}\rho \mu\nabla \phi\|_{L^p(0,T;L^r)} &\leq \|\sqrt{t}\rho \mu\nabla \phi\|_{L^2(0,T;L^3)}^{\frac{2}{p}} \|\sqrt{t}\rho \mu\nabla \phi\|_{L^\infty(0,T;L^{3/2})}^{1 - \frac{2}{p}},\\
\|\sqrt{t}\rho \phi\nabla \phi\|_{L^p(0,T;L^r)} &\leq \|\sqrt{t}\rho \phi\nabla \phi\|_{L^2(0,T;L^3)}^{\frac{2}{p}} \|\sqrt{t}\rho \phi\nabla \phi\|_{L^\infty(0,T;L^{3/2})}^{1 - \frac{2}{p}},\\
\|\sqrt{t}\rho \phi^3\nabla \phi\|_{L^p(0,T;L^r)} &\leq \|\sqrt{t}\rho \phi^3\nabla \phi\|_{L^2(0,T;L^3)}^{\frac{2}{p}} \|\sqrt{t}\rho \phi^3\nabla \phi\|_{L^\infty(0,T;L^{3/2})}^{1 - \frac{2}{p}}.
\end{align*}
By Theorem  \ref{1.1}, we conclude that
$\sqrt{t}\nabla^2\phi$,$\nabla\phi \in L^4(0, T; L^6)$,
thus
$\sqrt{t}\nabla^2\phi\nabla\phi \in L^2(0, T; L^3)$.
Similarly, we have
\begin{align*}
\sqrt{t}\nabla\phi\in L^\infty(0,T;L^6) ,
\nabla^2\phi\in L^\infty(0,T;L^2),
\end{align*}
which implies that
\begin{equation*}
\sqrt{t}\nabla\phi\nabla^2\phi\in L^\infty(0, T; L^{3/2}).
\end{equation*}
Hence
\begin{align*}
\|\sqrt{t} \nabla \phi\nabla^2\phi\|_{L^p(0,T;L^r)} &\leq \|\sqrt{t} \nabla \phi\nabla^2\phi\|_{L^2(0,T;L^3)}^{\frac{2}{p}} \|\sqrt{t} \nabla \phi\nabla^2\phi\|_{L^\infty(0,T;L^{3/2})}^{1 - \frac{2}{p}},
\end{align*}
with \(\frac{2}{p} + \frac{3}{r} = 2,\, p \geq 2\). Using the maximal regularity estimate for the Stokes equations and the standard estimate of elliptic equations for \eqref{eq-n} yields that
\begin{equation}
\left\lVert (\nabla^2 \sqrt{t} \mathbf{u}, \nabla^2 \sqrt{t} \phi,  \nabla^2 \sqrt{t} \mu) \right\rVert_{L^p(0,T;L^r)} + \left\lVert \nabla \sqrt{t} P \right\rVert_{L^p(0,T;L^r)} \leq C_{0,T} ,\label{eq-v}
\end{equation}
for $p \geq 2 $ and $ \frac{2}{p} + \frac{3}{r} = 2 $. Furthermore, from \eqref{eq-v} and the embedding \( W^1_r(\Omega) \hookrightarrow L^q(\Omega) \) with \( \frac{3}{q} = \frac{3}{r} - 1 \) if \( 1 \leq r < 3 \), we have
\begin{equation}
\nabla \sqrt{t} \mathbf{u}, \nabla \sqrt{t} \phi, \nabla \sqrt{t} \mu \in L^p(0, T; L^q)  \quad \text{with} \quad \frac{2}{p} + \frac{3}{r} = 2.\label{eq-t}
\end{equation}
Employing bounds  of \( \rho \mathbf{u}, \rho\mu, \rho\phi, \rho\phi^3, \nabla^2\phi \in L^\infty(0, T; L^6) \), \eqref{eq-t} and H\"older's inequality, yield that
\begin{align}
\sqrt{t} \rho \mathbf{u} \cdot \nabla \mathbf{u}, \sqrt{t} \rho \mathbf{u} \cdot \nabla \phi, \sqrt{t} \rho \mu \nabla \phi, \sqrt{t} \rho \phi \nabla \phi, \sqrt{t} \rho \phi^3 \nabla \phi, \sqrt{t} \nabla \phi\nabla^2\phi, \in L^p(0, T; L^r), \label{eq-13}
\end{align}
for all $(p, r)$, such that $p \geq 2$, $2 \leq r \leq \frac{6p}{3p - 4}$. From \eqref{eq-11}, \eqref{eq-12} and \eqref{eq-13}, we deduce that
\begin{equation}
\bigl\| (\nabla^2 \sqrt{t} \mathbf{u}, \nabla^2 \sqrt{t} \phi, \nabla^2 \sqrt{t} \mu) \bigr\|_{L^p(0, T; L^r)} + \bigl\| \nabla \sqrt{t} P \bigr\|_{L^p(0, T; L^r)} \leq C_{0, T},\label{eq-14}
\end{equation}
for all \( (p, r) \) such that $ p \geq 2, 2\leq r \leq \frac{6p}{3p - 4} $. Fix $p \in (2,4)$ such that $ps < 2p - 2s$,  and taking  $r=\frac{6p}{3p - 4}$,  thanks to $W_r^1(\Omega) \hookrightarrow L^\infty(\Omega)$(since $r > 3$ for $2 < p < 4$) and \eqref{eq-14}, we have
\[
\begin{split}
&\Big( \int_0^T \left\lVert \nabla \mathbf{u} \right\rVert_\infty^s dt \Big)^{\frac{1}{s}} + \Big( \int_0^T \left\lVert \nabla \phi \right\rVert_\infty^s dt \Big)^{\frac{1}{s}}
+\Big( \int_0^T \left\lVert \nabla\mu \right\rVert_\infty^s dt \Big)^{\frac{1}{s}}  \\
&\leq C \Big( \int_0^T \left\lVert \sqrt{t} \nabla \mathbf{u} \right\rVert_{W_r^1}^s \frac{dt}{(\sqrt{t})^s} \Big)^{\frac{1}{s}} + C \Big( \int_0^T \left\lVert \sqrt{t} \nabla \phi \right\rVert_{W_r^1}^s \frac{dt}{(\sqrt{t})^s} \Big)^{\frac{1}{s}}
+C \Big( \int_0^T \left\lVert \sqrt{t} \nabla \mu \right\rVert_{W_r^1}^s \frac{dt}{(\sqrt{t})^s} \Big)^{\frac{1}{s}} \\
&\leq C \Big( \int_0^T t^{-\frac{ps}{2p - 2s}} dt \Big)^{\frac{1}{s} - \frac{1}{p}} \Big( \left\lVert \nabla \sqrt{t} \mathbf{u} \right\rVert_{L_p(0,T;W_r^1)} + \left\lVert \nabla \sqrt{t} \phi \right\rVert_{L_p(0,T;W_r^1)}+\left\lVert \nabla \sqrt{t} \mu \right\rVert_{L_p(0,T;W_r^1)}  \Big) \\
&\leq C_{0} T^{-\frac{2p - 2s - ps}{2ps}},
\end{split}
\]
which concludes that \eqref{eq-s} holds.
\end{proof}
\begin{lemma}
Assume \( d = 2 \), then for all \( T > 0 \), \( p \in [2,\infty] \),  we have
\begin{equation}
\left\lVert (\nabla^2 \sqrt{t} \mathbf{u}, \nabla^2 \sqrt{t}\phi, \nabla^2 \sqrt{t} \mu) \right\rVert_{L^p(0,T;L^{r})} + \left\lVert \nabla \sqrt{t} P \right\rVert_{L^p(0,T;L^{r})} \leq C_{0,T},\label{eq-r}
\end{equation}
where $\ r \in [2,\frac{6p}{3p - 4}]$, \( C_{0,T} \) depends only on \( \rho^* \), \( p \) and the initial value. Furthermore, for \( s \in \left[1,2\right) \), then for some \( \theta > 0 \), we have
\begin{equation}
\int_0^T \left( \left\lVert \nabla \mathbf{u} \right\rVert_\infty^s + \left\lVert \nabla \mu\right\rVert_\infty^s + \left\lVert \nabla\phi \right\rVert_\infty^s \right) dt \leq C_{0,T} T^\theta.\label{eq-x}
\end{equation}
\end{lemma}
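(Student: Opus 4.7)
The plan is to follow the proof of Lemma \ref{3DDD} essentially verbatim, systematically replacing each use of the critical 3D embedding $\dot H^1(\Omega)\hookrightarrow L^6(\Omega)$ by the stronger 2D embedding $\dot H^1(\Omega)\hookrightarrow L^q(\Omega)$, valid for every finite $q$, and each use of $W^{1,r}(\Omega)\hookrightarrow L^\infty(\Omega)$ for $r>3$ by its 2D counterpart, which only requires $r>2$. First I would rewrite system \eqref{NSCH} in the stationary form \eqref{eq-n}: a Stokes system for $\sqrt{t}\mathbf{u}$ and two elliptic problems for $\sqrt{t}\phi$ and $\sqrt{t}\mu$, with source terms carrying the time-derivative pieces $\rho\sqrt{t}\mathbf{u}_t$, $\rho\sqrt{t}\phi_t$, the convective terms $\sqrt{t}\rho\mathbf{u}\cdot\nabla\mathbf{u}$, $\sqrt{t}\rho\mathbf{u}\cdot\nabla\phi$, the capillary/mixture terms $\sqrt{t}\rho\mu\nabla\phi$, $\sqrt{t}\rho\phi\nabla\phi$, $\sqrt{t}\rho\phi^3\nabla\phi$, and the quadratic gradient term $\sqrt{t}\nabla\phi\nabla^2\phi$.

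Using Theorem \ref{1.1} together with the 2D weighted estimate \eqref{2D}, one deduces that
$$
\rho\sqrt{t}\mathbf{u}_t,\ \rho\sqrt{t}\phi_t,\ \rho\sqrt{t}\phi,\ \rho\sqrt{t}\mu,\ \rho\sqrt{t}\phi^3\in L^\infty(0,T;L^2)\cap L^2(0,T;H^1),
$$
and hence, by the 2D embedding, in $L^\infty(0,T;L^2)\cap L^2(0,T;L^q)$ for arbitrary $q<\infty$. The same interpolation employed in \eqref{eq-11}-\eqref{eq-12} then produces $L^p(0,T;L^r)$-control for every admissible pair with $2\leq r\leq \tfrac{6p}{3p-4}$. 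For the nonlinear products I would combine the 2D Agmon-Ladyzhenskaya pair $\|v\|_\infty^2\leq C\|v\|_2\|\nabla^2 v\|_2$ and $\|v\|_4^4\leq C\|v\|_2^2\|\nabla v\|_2^2$ with Theorem \ref{1.1} to record that $\mathbf{u},\phi\in L^4(0,T;L^\infty)$; H\"older then places each product into the desired $L^p(L^r)$-space. Feeding these source-term bounds into the maximal regularity estimate for the Stokes operator (with $L^\infty$-in-time coefficient $\nu(\phi)$) and the standard $L^p$-theory for the two elliptic equations yields \eqref{eq-r}.

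For \eqref{eq-x}, I would fix $p>2$ satisfying $ps<2p-2s$, i.e.\ $s<\tfrac{2p}{p+2}$. Since $p$ may be taken as large as one wishes, the upper bound tends to $2$, so this condition is achievable for any $s\in[1,2)$. Choosing the endpoint $r=\tfrac{6p}{3p-4}$, one automatically has $r>2$ for every $p>2$, and hence the 2D embedding $W^{1,r}(\Omega)\hookrightarrow L^\infty(\Omega)$ is at one's disposal. The shift-of-integrability argument at the end of the proof of Lemma \ref{3DDD} now carries over word for word: bound $\|\nabla\mathbf{u}\|_\infty$, $\|\nabla\phi\|_\infty$, $\|\nabla\mu\|_\infty$ by $\|\sqrt{t}\nabla(\cdot)\|_{W^{1,r}}/\sqrt{t}$, apply H\"older's inequality in time with exponents $p/s$ and $p/(p-s)$, and factor out $\int_0^T t^{-ps/(2p-2s)}dt$, whose finiteness is guaranteed precisely by the condition $ps<2p-2s$. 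This produces \eqref{eq-x} with $\theta=\tfrac{2p-2s-ps}{2ps}>0$.

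The main obstacle is really only the combinatorial bookkeeping: one must verify that each of the several nonlinear products on the right-hand side of the reformulated system belongs simultaneously to $L^p(0,T;L^r)$ with the chosen pair $(p,r)=\bigl(p,\tfrac{6p}{3p-4}\bigr)$. The analytic content is in fact lighter than in the 3D proof, because the constraint "$r>3$" that forced $p\in(2,4)$ in Lemma \ref{3DDD} is relaxed in 2D to "$r>2$", leaving $p$ free to approach $\infty$; this is exactly what upgrades the admissible integrability exponent $s$ from the 3D threshold $\tfrac{4}{3}$ to the 2D threshold $2$.
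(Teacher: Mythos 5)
Your proposal is correct and follows essentially the same route as the paper: the paper's own 2D proof is precisely the 3D argument of Lemma \ref{3DDD} redone with the weighted estimate \eqref{2D}, the 2D embeddings $H^1(\Omega)\hookrightarrow L^q(\Omega)$ ($q<\infty$) and $W^{1,r}(\Omega)\hookrightarrow L^\infty(\Omega)$ ($r>2$), interpolation, Stokes maximal regularity plus elliptic theory, and the same shift-of-integrability step with $p$ large and $r=\frac{6p}{3p-4}$, yielding $s<2$ and $\theta=\frac{2p-2s-ps}{2ps}$. Your observation that the relaxed constraint $r>2$ frees $p$ to be large, upgrading the threshold from $4/3$ to $2$, is exactly the point exploited in the paper.
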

\begin{proof}
Using \eqref{2D} and \eqref{NSCH2}  yields that
\begin{equation*}
(\rho\sqrt{t}\mathbf{u}_t, \rho\sqrt{t}\phi_t) \in L^\infty(0, T; L^2), (\rho\sqrt{t}\mu_t) \in L^2(0, T; L^2).
\end{equation*}
 We have
\begin{equation*}
(\rho\sqrt{t}\mathbf{u}_t, \rho\sqrt{t}\phi_t) \in L^2(0, T; L^q) \quad \text{for} \quad q < \infty.
\end{equation*}
Then, it follows from the interpolation inequality, that
\begin{equation*}
\|\rho\sqrt{t}\mathbf{u}_t\|_{L^p(0,T;L^r)} \leq \|\rho\sqrt{t}\mathbf{u}_t\|_{L^\infty(0,T;L^2)}^{1 - \frac{2}{p}} \|\rho\sqrt{t}\mathbf{u}_t\|_{L^2(0,T;L^q)}^{\frac{2}{p}},
\end{equation*}
\begin{equation*}
\|\rho\sqrt{t}\phi_t\|_{L^p(0,T;L^r)} \leq \|\rho\sqrt{t}\phi_t\|_{L^\infty(0,T;L^2)}^{1 - \frac{2}{p}} \|\rho\sqrt{t}\phi_t\|_{L^2(0,T;L^q)}^{\frac{2}{p}},
\end{equation*}
with $\frac{1}{r} = \frac{p - 2}{2p} + \frac{2}{pq}$, $2 \leq r < p^*$, $p^*=\frac{2p}{p-2}$.
Thus
\begin{equation}
\|(\rho\sqrt{t}\mathbf{u}_t, \rho\sqrt{t}\phi_t)\|_{L^p(0,T;L^r)} \leq C_{0,T} \quad \text{for} \quad p \in [2, \infty], \quad r \in [2, p^*).
\end{equation}
And similarly, we have
\begin{equation}
\|(\rho\sqrt{t}\mathbf{u}, \rho\sqrt{t}\phi, \rho\sqrt{t}\mu, \rho\sqrt{t}\phi^3)\|_{L^p(0,T;L^r)} \leq C_{0,T} \quad \text{for} \quad p \in [2, \infty], \quad r \in [2, p^*).
\end{equation}
It is known from Theorem \ref{1.1} that $(\nabla \mathbf{u}, \nabla \phi)$ is bounded in  $L^\infty(0, T; L^2) \cap L^2(0, T; H^1)$. Employing the interpolation inequality, it is easy to get,  for $\frac{1}{r} = \frac{p - 2}{2p} + \frac{2}{pq}$, $2 \leq r < p^*$, that
\begin{align*}
\|\nabla \mathbf{u}\|_{L^p(0,T;L^r)} &\le \|\nabla \mathbf{u}\|_{L^\infty(0,T;L^2)}^{1 - \frac{2}{p}} \|\nabla \mathbf{u}\|_{L^2(0,T;L^q)}^{\frac{2}{p}} \\
&\le \|\nabla \mathbf{u}\|_{L^\infty(0,T;L^2)}^{1 - \frac{2}{p}} \|\nabla \mathbf{u}\|_{L^2(0,T;H^1)}^{\frac{2}{p}}, \\
\|\nabla \phi\|_{L^p(0,T;L^r)} &\le \|\nabla \phi\|_{L^\infty(0,T;L^2)}^{1 - \frac{2}{p}} \|\nabla \phi\|_{L^2(0,T;L^q)}^{\frac{2}{p}} \\
&\le \|\nabla \phi\|_{L^\infty(0,T;L^2)}^{1 - \frac{2}{p}} \|\nabla \phi\|_{L^2(0,T;H^1)}^{\frac{2}{p}},
\end{align*}
which implies that
\begin{equation}
\|(\nabla \mathbf{u}, \nabla \phi)\|_{L^p(0,T;L^r)} \le C_{0,T} \quad \text{for} \quad p \ge 2, \ r < p^*.
\end{equation}
Similarly, since $\nabla^2\phi$ is bounded in $L^\infty(0, T; L^2) \cap L^2(0, T; L^6)$, we get
\begin{align*}
\|\nabla ^2\phi\|_{L^p(0,T;L^r)} &\le \|\nabla ^2\phi\|_{L^\infty(0,T;L^2)}^{1 - \frac{2}{p}} \|\nabla ^2\phi\|_{L^2(0,T;L^q)}^{\frac{2}{p}},
\end{align*}
with \(\frac{1}{r} = \frac{p - 2}{2p} + \frac{2}{pq}\). Here, when \(q\) takes \(6\), then \(r\) may take the maximum value of \(\frac{6p}{3p - 4}\). Then we easily get
\begin{equation}
\|\nabla^2\phi\|_{L^p(0,T;L^r)} \le C_{0,T} \quad \text{for} \quad p \ge 2,\quad 2\leq r \leq \frac{6p}{3p - 4}.
\end{equation}
As obvious, we conclude that
\begin{align}
\| (\sqrt{t} \rho \mathbf{u} \cdot \nabla \mathbf{u}, \sqrt{t} \rho \mathbf{u} \cdot\nabla \phi, \sqrt{t} \rho \phi \nabla \phi, \sqrt{t}& \rho \phi^3 \nabla \phi, \sqrt{t} \rho \mu \nabla \phi ) \|_{L^p(0,T; L^r)} \leq C_{0,T}\, \text{for}\, p \in [2,\infty],  r \in [2,p^*),\\
\| (\sqrt{t} \nabla \phi\nabla^2 \phi) \|_{L^p(0,T; L^r)} &\leq C_{0,T} \quad \text{for} \quad p \in [2,\infty], \ r \in \Big[2, \frac{6p}{3p - 4}\Big].
\end{align}
Applying the maximal regularity estimate for the Stokes equations and the standard estimate for elliptic equations for \eqref{eq-n} yields that
\begin{equation}
\| (\nabla^2 \sqrt{t} \mathbf{u}, \nabla^2 \sqrt{t} \phi, \nabla^2 \sqrt{t} \phi, \nabla \sqrt{t} P ) \|_{L^p(0,T; L^r)} \leq C_{0,T} \quad \text{for} \quad p \in [2,\infty], \ r \in \Big[2, \frac{6p}{3p - 4}\Big].\label{eq-m}
\end{equation}
Furthermore, using the bound for $(\rho \mathbf{u}, \rho \phi,\rho\mu) \in L^\infty(0,T;L^6)$ and when $d =2$, the embedding $W_r^1(\Omega) \hookrightarrow L^q(\Omega)$ with $\frac{3}{q} = \frac{3}{r} - 1$ if $2 \leq r < 3$ ,  which implies that $(\nabla \sqrt{t} \mathbf{u}, \nabla \sqrt{t} \phi,\nabla \sqrt{t} \mu)$ is bounded in $L^p(0,T;L^q)$, we get \eqref{eq-m} for the full range of indices.

Fix \( p \in [2,\infty) \) so that \( ps < 2(p - s) \) and \( 1 \leq s < 2 \), which means that \( \Big( \int_0^T t^{-\frac{ps}{2p - 2s}} dt \Big)^{\frac{1}{s} - \frac{1}{p}} \leq C_{0,T} \).
Taking \( r \in \Big[2, \frac{6p}{3p - 4}\Big] \) such that the embedding \( W_r^1 \hookrightarrow L^\infty \), we conclude that
\[
\begin{split}
&\Big( \int_0^T \big\lVert \nabla \mathbf{u} \big\rVert_\infty^s dt \Big)^{\frac{1}{s}} + \Big( \int_0^T \big\lVert \nabla \phi \big\rVert_\infty^s dt \Big)^{\frac{1}{s}}
+\Big( \int_0^T \big\lVert \nabla\mu \big\rVert_\infty^s dt \Big)^{\frac{1}{s}}  \\
&\leq C \Big( \int_0^T \big\lVert \sqrt{t} \nabla \mathbf{u} \big\rVert_{W_r^1}^s \frac{dt}{(\sqrt{t})^s} \Big)^{\frac{1}{s}} + C \Big( \int_0^T \big\lVert \sqrt{t} \nabla \phi \big\rVert_{W_r^1}^s \frac{dt}{(\sqrt{t})^s} \Big)^{\frac{1}{s}}
+C \Big( \int_0^T \big\lVert \sqrt{t} \nabla \mu \big\rVert_{W_r^1}^s \frac{dt}{(\sqrt{t})^s} \Big)^{\frac{1}{s}} \\
&\leq C \Big( \int_0^T t^{-\frac{ps}{2p - 2s}} dt \Big)^{\frac{1}{s} - \frac{1}{p}} \Big( \big\lVert \nabla \sqrt{t} \mathbf{u} \big\rVert_{L_p(0,T;W_r^1)} + \big\lVert \nabla \sqrt{t} \phi \big\rVert_{L_p(0,T;W_r^1)}+\big\lVert \nabla \sqrt{t} \mu \big\rVert_{L_p(0,T;W_r^1)}  \Big) \\
&\leq C_{0} T^{-\frac{2p - 2s - ps}{2ps}},
\end{split}
\]
which deduces that \eqref{eq-x} holds.
\end{proof}
\section{The proof of Theorem \ref{1.2}} In order to prove  Theorem \ref{1.2}, we first show the Lagrangian formulation of the system  \eqref{NSCH}.
Now, we introduce the flow \( X : \mathbb{R}^+ \times \Omega \to \Omega\) of \( \mathbf{u} \) by
\[
\partial_t X(t, y) = \mathbf{u}(t, X(t, y)), \quad X(0, y) = y.
\]
Note that
\[
X(t, y) = y + \int_0^t \mathbf{u}(\tau, X(\tau, y)) d\tau,
\]
and
\[
\nabla_y X(t, y) = \text{Id} + \int_0^t \nabla_y \mathbf{u}(\tau, X(\tau, y)) d\tau.
\]
In Lagrangian coordinates \((t, y)\), a solution \((\rho, \mathbf{u}, \phi,\mu, P)\) to the system \eqref{NSCH} recasts in \((\bar{\rho}, \bar{\mathbf{u}}, \bar{\phi},\bar{\mu}, \bar{P})\) with
\begin{equation}
\begin{split}
\bar{\rho}(t, y) &= \rho(t, X(t, y)), \quad \bar{\mathbf{u}}(t, y) = \mathbf{u}(t, X(t, y)), \\
\bar{\phi}(t, y) &= \phi(t, X(t, y)),\quad \bar{\mu}(t, y) = \mu(t, X(t, y)),\quad \bar{P}(t, y) = P(t, X(t, y)),
\end{split}
\end{equation}
and the triplet \((\bar{\rho}, \bar{\mathbf{u}}, \bar{\phi},\bar{\mu},\bar{P})\) thus satisfies
\begin{equation}
\begin{cases}
\bar{\rho} \bar{\mathbf{u}}_t - \Dv_\mathbf{u}(\nu(\bar{\phi})\nabla_\mathbf{u}\bar{\mathbf{u}}+ \nabla_\mathbf{u} \bar{P} = -\Dv_\mathbf{u}(\nabla_\mathbf{u}\bar{\mathbf{u}}\otimes\nabla_\mathbf{u}\bar{\mathbf{u}}), \label{eq-z}
\\
\operatorname{div}_\mathbf{u} \bar{\mathbf{u}} = 0 , \\
\bar{\rho} \bar{\phi}_t = -\Delta_\mathbf{u}\bar{\mu},&  \\
\bar{\rho}_t = 0 , \\
\bar{\rho} \bar{\mu} = -\Delta_\mathbf{u}\bar{\phi}+\bar{\rho}(\bar{\phi}^3-\bar{\phi}),\\
\bar{\rho}(y, 0) = \rho_0(y), \bar{\mathbf{u}}(y, 0) = u_0(y), \bar{\phi}(y, 0) = \phi_0(y) ,
\bar{\mu}(y, 0) = \mu_0(y),&
\end{cases}
\end{equation}
where operators \(\nabla_\mathbf{u}, \Delta_\mathbf{u}, \nabla_\mathbf{u} \operatorname{div}_\mathbf{u}\) and \(\operatorname{div}_\mathbf{u}\) correspond to the original operators \(\nabla, \Delta, \nabla \operatorname{div}\), respectively, after performing the change to the Lagrangian coordinates.
As pointed out in \cite{RB1, B2, PZZ}, in our regularity framework, that latter system \eqref{eq-z} is equivalent to the system \eqref{NSCH}. Thanks to \eqref{eq-s} and \eqref{eq-x}, we can take the time $T$ to be small enough so that
\begin{equation}
\int_0^T \|\nabla \mathbf{u}\|_\infty d\tau \leq \frac{1}{2}.\label{eq-8}
\end{equation}
Set
\[
\begin{aligned}
A &= (\nabla X)^{-1} \text{(inverse of deformation tensor)}, \\
J &= \det \nabla X \text{(Jacobian determinant)}, \\
a &= JA \text{(transpose of cofactor matrix)}.
\end{aligned}
\]
Thus, in the \((t, y)\)-coordinates, operators \(\nabla\), \(\operatorname{div}\) and \(\Delta\) translate into
\begin{equation}
\nabla_u := {}^T\!A \nabla_y, \quad \operatorname{div}_\mathbf{u} := \operatorname{div}_y(A \cdot), \quad \text{and} \quad \Delta_\mathbf{u} := \operatorname{div}_\mathbf{u}\nabla_\mathbf{u}. \label{eq-6}
\end{equation}
Moreover, given some matrix \( N \), we define the divergence operator (acting on vector fields \( v \)) by the formulation
\begin{equation}
\operatorname{div}_\mathbf{u} N v = \operatorname{div}_y(N \cdot v) \stackrel{\text{def}}{=} {}^T\!N : \nabla v,
\end{equation}
where \( N : B = \sum_{i,j} N_{ij} B_{ji} \) for \( N = (N_{ij})_{1 \leq i,j \leq d} \) and \( B = (B_{ij})_{1 \leq i,j \leq d} \) two \( d \times d \) matrices.
Of course, if the condition \eqref{eq-8} is fulfilled then we have
\begin{equation}
A = \Big( \text{Id} + (\nabla_y X - \text{Id}) \Big)^{-1} = \sum_{k=0}^{+\infty} (-1)^k \Big( \int_0^t \nabla_y \bar{\mathbf{u}}(\tau, \cdot) d\tau \Big)^k,\label{eq-9}
\end{equation}
which yields that
\begin{equation}
\delta A = \Big( \int_0^t \nabla \delta \mathbf{u} d\tau \Big) \cdot \Big( \sum_{k \geq 1} \sum_{0 \leq j < k} C_1^j C_2^{k - 1 - j} \Big) \quad \text{with} \quad C_i(t) = \int_0^t \nabla \bar{\mathbf{u}}^i d\tau, \label{eq-7}
\end{equation}
where \( \delta A \stackrel{\text{def}}{=} A_2 - A_1 \) and \( \delta \mathbf{u} \stackrel{\text{def}}{=} \bar{\mathbf{u}}^2 - \bar{\mathbf{u}}^1 \). From \eqref{eq-8} and the convergence property of series \eqref{eq-9}, we known that $A$ is bounded.

Finally, we shall prove   Theorem \ref{1.2}. Let $(\rho^1, \mathbf{u}^1, \phi^1,\mu^1, P^1)$and $(\rho^2, \mathbf{u}^2,\phi^2,\mu^2,P^2)$ be two solutions of the system \eqref{NSCH} fulfilling the properties of Theorem \ref{1.1} with the same initial data, and denote by \((\bar{\rho}^1, \bar{\mathbf{u}}^1, \bar{\phi}^1,\bar{\mu}^1,  \bar{P}^1)\) and \((\bar{\rho}^2, \bar{\mathbf{u}}^2,\bar{\phi}^2, \bar{\mu}^2, \bar{P}^2)\) in Lagrangian coordinates. Of course, we have \(\bar{\rho}^1 = \bar{\rho}^2 = \rho_0\), which explains the choice of our approach here. In what follows, we shall use repeatedly the fact that for \( i = 1,2 \),
\begin{equation}
\begin{split}
&t^{\frac{1}{2}} \nabla \bar{\mathbf{u}}^i \in L^2(0,T; L^\infty), \quad t^{\frac{1}{2}} \nabla \bar{\phi}^i \in L^2(0,T; L^\infty), \quad t^{\frac{1}{2}} \nabla \bar{\mu}^i \in L^2(0,T; L^\infty), t^{\frac{1}{2}} \nabla \bar{P}^i \in L^2(0,T; L^4), \\
&t^{\frac{1}{2}} \bar{\mathbf{u}}_t^i \in L^{4/3}(0,T; L^6), \, \nabla \bar{\mathbf{u}}^i \in L^1(0,T; L^\infty) \cap L^2(0,T; L^6) \cap L^4(0,T; L^3),\, \nabla \bar{\phi}^i \in L^1(0,T; L^\infty) \cap L^\infty(0,T; w^{1,6}) ,\\
&\nabla \bar{\mu}^i \in L^1(0,T; L^\infty) \cap L^2(0,T; w^{1,6}) ,\quad
\bar{\mathbf{u}}^i \in L^4(0,T; L^\infty),\quad\bar{\phi}^i \in L^4(0,T; L^\infty),\quad\bar{\mu}^i \in L^4(0,T; L^\infty).\label{eq-5}
\end{split}
\end{equation}
It should be noted that the terms in \eqref{eq-5} is less than or equal to \( c(T) \), where \( c(T) \) designates a nonnegative continuous increasing function of \( T \), with \( c(0) = 0 \) and \( c(T) \to 0 \) when \( T \to 0 \). For example, in 3-D, using Lemma \ref{chazhi},  Theorem \ref{1.1} and Lemma  \ref{3DDD}, we have
\[
\begin{aligned}
\left\| t^{\frac{1}{2}} \nabla \bar{\mathbf{u}}^i \right\|_{L^2(0,T; L^\infty)}^2
&= \int_0^T t \left\| {}^T\!A \nabla \mathbf{u}^i \right\|_{L^\infty}^2 dt \\
&\leq C \int_0^T t \left\| \nabla \mathbf{u}^i \right\|_{L^2}^{\frac{1}{2}} \left\| \nabla^2 \mathbf{u}^i \right\|_{L^6}^{\frac{3}{2}} dt \\
&\leq C \sup_{t \in [0,T]} \left\| \nabla \mathbf{u}^i \right\|_{L^2}^{\frac{1}{2}} \int_0^T t \left\| \nabla^2 \mathbf{u}^i \right\|_{L^6}^{\frac{3}{2}} dt \\
&\leq C T^{\frac{1}{2}} \left\| \sqrt{t} \nabla^2 \mathbf{u}^i \right\|_{L^2(0,T; L^6)}^{\frac{3}{2}} \\
&\leq c(T),
\end{aligned}
\]
\[
\begin{aligned}
\left\| t^{\frac{1}{2}} \nabla \bar{\phi}^i \right\|_{L^2(0,T; L^\infty)}^2
&= \int_0^T t \left\| {}^T\!A \nabla \phi^i \right\|_{L^\infty}^2 dt \\
&\leq C \int_0^T t \left\| \nabla \phi^i \right\|_{L^2}^{\frac{1}{2}} \left\| \nabla^2 \phi^i \right\|_{L^6}^{\frac{3}{2}} dt \\
&\leq C \sup_{t \in [0,T]} \left\| \nabla \phi^i \right\|_{L^2}^{\frac{1}{2}} \int_0^T t \left\| \nabla^2 \phi^i \right\|_{L^6}^{\frac{3}{2}} dt \\
&\leq C T^{\frac{1}{2}} \left\| \sqrt{t} \nabla^2 \phi^i \right\|_{L^2(0,T; L^6)}^{\frac{3}{2}} \\
&\leq c(T),
\end{aligned}
\]
\[
\begin{aligned}
\left\| t^{\frac{1}{2}} \nabla \bar{\mu}^i \right\|_{L^2(0,T; L^\infty)}^2
&= \int_0^T t \left\| {}^T\!A \nabla \mu^i \right\|_{L^\infty}^2 dt \\
&\leq C \int_0^T t \left\| \nabla \mu^i \right\|_{L^2}^{\frac{1}{2}} \left\| \nabla^2 \mu^i \right\|_{L^6}^{\frac{3}{2}} dt \\
&\leq C \sup_{t \in [0,T]} \left\| \nabla \mu^i \right\|_{L^2}^{\frac{1}{2}} \int_0^T t \left\| \nabla^2 \mu^i \right\|_{L^6}^{\frac{3}{2}} dt \\
&\leq C T^{\frac{1}{2}} \left\| \sqrt{t} \nabla^2 \mu^i \right\|_{L^2(0,T; L^6)}^{\frac{3}{2}} \\
&\leq c(T).
\end{aligned}
\]
Due to $\|A_i\|_{\infty} < \infty\ (i = 1, 2)$, we have,\\
\[
\begin{aligned}
\left\|\bar{\phi}^i \right\|_{L^4(0,T; L^\infty)}^4
&= \int_0^T \left\| {}^T\!A \phi^i \right\|_{L^\infty}^4 dt \\
&\leq C \int_0^T  \left\| \nabla \phi^i \right\|_{L^2}^{2} \left\| \nabla^2 \phi^i \right\|_{L^2}^{2} dt \\
&\leq c(T),
\end{aligned}
\]
\[
\begin{aligned}
\left\|\bar{\mu}^i \right\|_{L^4(0,T; L^\infty)}^4
&= \int_0^T  \left\| {}^T\!A \nabla \mu^i \right\|_{L^\infty}^4 dt \\
&\leq C \int_0^T  \left\| \nabla \mu^i \right\|_{L^2}^{2} \left\| \nabla^2 \mu^i \right\|_{L^2}^{2} dt \\
&\leq C \sup_{\in [0,T]} \left\| \nabla \mu^i \right\|_{L^2}^{2} \int_0^T \left\| \nabla^2 \mu^i \right\|_{L^2}^{2} dt \\
&\leq C \left\|\nabla^2 \mu^i \right\|_{L^2(0,T; L^2)}^{2} \\
&\leq c(T).
\end{aligned}
\]
By Theorem \ref{1.1}, \eqref{eq-s} and $\|A_i\|_{\infty} < \infty\ (i = 1, 2)$, we get
$$\nabla \bar{\phi}^i \in L^1(0,T; L^\infty) \cap L^\infty(0,T; w^{1,6}) ,\quad
\nabla \bar{\mu}^i \in L^1(0,T; L^\infty) \cap L^2(0,T; w^{1,6}).$$
Denoting
\[
\delta u \stackrel{\text{def}}{=} \bar{\mathbf{u}}^2 - \bar{\mathbf{u}}^1, \quad \delta \phi \stackrel{\text{def}}{=} \bar{\phi}^2 - \bar{\phi}^1, \quad \delta\mu \stackrel{\text{def}}{=} \bar{\mu}^2 - \bar{\mu}^1
\quad\text{and} \quad \delta P \stackrel{\text{def}}{=} \bar{P}^2 - \bar{P}^1,
\]
we get
\begin{equation}
\begin{cases}
\rho_0 \, \delta \mathbf{u}_t - \mathop{\mathrm{div}_{\mathbf{u}^1}} \left( \nu(\bar{\phi}^1) \, \nabla_{\mathbf{u}^1} \delta \mathbf{u} + \nabla_{\mathbf{u}^1} \delta P \right.
- \rho_0 \, \bar{\mu}^1 \, \nabla_{\mathbf{u}^1} \delta \phi + \rho_0 \left( |\bar{\phi}^1|^3 - \bar{\phi}^1 \right) \, \nabla_{\mathbf{u}^1} \delta \phi
+ \nabla_{\mathbf{u}^1}^2 \bar{\phi}^1 \, \nabla_{\mathbf{u}^1} \delta \phi \Bigr) = \delta f_1, \\
\mathop{\mathrm{div}_{\mathbf{u}^1}} \delta \mathbf{u} = (\mathop{\mathrm{div}_{\mathbf{u}^1}} - \mathop{\mathrm{div}_{\mathbf{u}^2}}) \bar{\mathbf{u}}^2, \\
\rho_0 \, \delta \phi_t - \Delta_{\mathbf{u}^1} \delta \mu = \delta f_2, \\
\rho_0 \, \delta \mu + \Delta_{\mathbf{u}^1} \delta \phi - \rho_0 \left( \delta \phi^3 - \delta \phi \right) = \delta f_3, \\
(\delta \mathbf{u}, \delta \phi, \delta \mu) \big\vert_{t=0}= (0, 0, 0)
\end{cases}
\end{equation}
with
\[
\begin{aligned}
\delta f_1 \stackrel{\text{def}}{=} \big[ \mathrm{div}_{\mathbf{u}^2} (\nu(\bar{\phi}^1) \nabla _{\mathbf{u}^2} - \mathrm{div}_{\mathbf{u}^1}(\nu(\bar{\phi}^1) \nabla_{\mathbf{u}^1}) \big] \bar{\mathbf{u}}^2 - (\nabla_{\mathbf{u}^2} - \nabla_{\mathbf{u}^1}) \bar{P}^2 + (\rho_0 \bar{\mu}^2 \nabla_{\mathbf{u}^2} - \rho_0 \bar{\mu}^1 \nabla_{\mathbf{u}^1}) \bar{\phi}^2\\
- \rho_0 \big( |\bar{\phi}^2|^3 \nabla_{\mathbf{u}^2} - |\bar{\phi}^1|^3 \nabla_{\mathbf{u}^1} \big) \bar{\phi}^2 + \rho_0 \big( \bar{\phi}^2 \nabla_{\mathbf{u}^2} - \bar{\phi}^1 \nabla_{\mathbf{u}^1}\big) \bar{\phi}^2 - \big( \nabla^2_{\mathbf{u}^2} \bar{\phi}^2 \nabla_{\mathbf{u}^2} - \nabla^2_{\mathbf{u}^1} \bar{\phi}^1 \nabla_{\mathbf{u}^1}\big) \bar{\phi}^2
\end{aligned}
\]
\[
\delta f_2 \stackrel{\text{def}}{=} (\Delta_{\mathbf{u}^2} - \Delta_{\mathbf{u}^1}) \bar{\mu}^2, \quad \delta f_3 \stackrel{\text{def}}{=} -(\Delta_{\mathbf{u}^2} - \Delta_{\mathbf{u}^1}) \bar{\phi}^2
\]
We claim for sufficiently small \( T > 0 \):
\begin{align*}
\int_0^T \int_{\Omega} \bigl(|\delta \mathbf{u}(t, y)|^2 + |\delta \phi(t, y)|^2+|\delta \mu(t, y)|^2 + |\nabla \delta \mathbf{u}(t, y)|^2 \nonumber + |\nabla \delta \phi(t, y)|^2\bigr) \, dy \, dt = 0.
\end{align*}
To prove our claim, we first decompose \( \delta \mathbf{u} \) into:
\begin{align}
\delta \mathbf{u} = \varphi + \psi,
\end{align}
where \( \varphi \) is the solution given by Lemma \ref{lemma:4.1} to the following problem:
\begin{align}
\mathop{\mathrm{div}_{\mathbf{u}^1}} \varphi = (\mathop{\mathrm{div}_{\mathbf{u}^1}} - \mathop{\mathrm{div}_{\mathbf{u}^2}}) \bar{\mathbf{u}}^2 \nonumber= \mathop{\mathrm{div}}(\delta A \bar{\mathbf{u}}^2).
\end{align}
Then \eqref{4.9} and \eqref{eq-9} ensure that there exist two universal positive constants \( c \) and \( C \) such that if
\begin{align}
\|\nabla \bar{\mathbf{u}}^1\|_{L^1(0,T; L^\infty)} + \|\nabla \bar{\mathbf{u}}^1\|_{L^2(0,T; L^6)} &\leq c,\label{eq-q}
\end{align}
and then the following inequalities hold true:
\begin{align}
&\|\varphi\|_{L^4(0,T;L^2)} \leq C\|\delta A \bar{\mathbf{u}}^2\|_{L^4(0,T;L^2)}, \|\nabla \varphi\|_{L^2(0,T;L^2)} \leq C\|^T \delta A : \nabla \bar{\mathbf{u}}^2\|_{L^2(0,T;L^2)}, \notag\label{eq-a} \\
&\text{and } \|\varphi_t\|_{L^{4/3}(0,T;L^{3/2})} \leq C\|\delta A \bar{\mathbf{u}}^2\|_{L^4(0,T;L^2)} + C\|(\delta A \bar{\mathbf{u}}^2)_t\|_{L^{4/3}(0,T;L^{3/2})}.
\end{align}
Now, let us bound these terms in the right hand side  of \eqref{eq-a}. Regarding $^T \delta A : \nabla \bar{\mathbf{u}}^2$, it follows from Holder's inequality, \eqref{eq-7} and \eqref{eq-q} that
\begin{align}
\sup_{t \in [0,T]} \|t^{-1/2} \delta A\|_2 &\leq C \sup_{t \in [0,T]} \left\|t^{-1/2} \int_0^t \nabla \delta \mathbf{u} d\tau \right\|_2 \leq C\|\nabla \delta \mathbf{u}\|_{L^2(0,T;L_2)}.\label{eq-4}
\end{align}
According to \eqref{eq-5} and \eqref{eq-4}, we obtain
\begin{align}
\|^T \delta A : \nabla \bar{\mathbf{u}}^2\|_{L^2(0,T \times \mathbb{T}^d)} &\leq \sup_{t \in [0,T]} \|t^{-1/2} \delta A\|_2 \|t^{1/2} \nabla \bar{\mathbf{u}}^2\|_{L^2(0,T;L^\infty)} \notag \\
&\leq c(T)\|\nabla \delta \mathbf{u}\|_{L^2(0,T;L^2)}.
\end{align}
Similarly, we also have
\begin{align}
\|\delta A \bar{\mathbf{u}}^2\|_{L^4(0,T;L^2)} &\leq \|t^{-1/2} \delta A\|_{L^\infty(0,T;L^2)} \|t^{1/2} \bar{\mathbf{u}}^2\|_{L^4(0,T;L^\infty)}.
\end{align}
Using \eqref{eq-5}, \eqref{eq-a} and \eqref{eq-4} yields that
\begin{align}
\|\nabla \varphi\|_{L^2(0,T;L^2)} &\leq c(T)\|\nabla \delta \mathbf{u}\|_{L^2(0,T;L^2)},\label{eq-b}
\end{align}
and
\begin{align}
\|\varphi\|_{L^4(0,T;L^2)} &\leq c(T)\|\nabla \delta \mathbf{u}\|_{L^2(0,T;L^2)}.
\label{eq-c}
\end{align}
In order to bound $\varphi_t$, it suffices to derive an appropriate estimate in $L^{4/3}(0,T;L^{3/2})$ for
\begin{align*}
(\delta A \bar{\mathbf{u}}^2)_t = \delta A \bar{\mathbf{u}}_t^2 + (\delta A)_t \bar{\mathbf{u}}^2.
\end{align*}
Thanks to \eqref{eq-5} and \eqref{eq-4}, we have
\begin{align*}
\|\delta A \bar{\mathbf{u}}_t^2\|_{L^{4/3}(0,T;L^{3/2})} &\leq \|t^{-1/2} \delta A\|_{L^\infty(0,T;L^2)} \|t^{1/2} \nabla \bar{\mathbf{u}}_t^2\|_{L^{4/3}(0,T;L^6)} \notag \\
&\leq c(T)\|\nabla \delta \mathbf{u}\|_{L^2(0,T;L^2)}.
\end{align*}
For the term $(\delta A)_t \bar{\mathbf{u}}^2$, it follows from H\"{o}lder's inequality that
\begin{align*}
\|(\delta A)_t \bar{\mathbf{u}}^2\|_{L^{4/3}(0,T;L^{3/2})} &\leq \|(\delta A)_t\|_{L^2(0,T \times \mathbb{T}^d)} \|\bar{\mathbf{u}}^2\|_{L^4(0,T;L^6)}.
\end{align*}
Furthermore, differentiating \eqref{eq-7} with respect to \( t \) and using \eqref{eq-q} for \( \bar{\mathbf{u}}^1 \) and \( \bar{\mathbf{u}}^2 \) yield that
\begin{align*}
\| (\delta A)_t \|_2 &\leq C \Big( \| \nabla \delta \mathbf{u} \|_2 + \Big\| t^{-1/2} \int_0^t \nabla \delta \mathbf{u} \, dr \Big\|_2 \big( \| t^{1/2} \nabla \bar{\mathbf{u}}^1 \|_\infty + \| t^{1/2} \nabla \bar{\mathbf{u}}^2 \|_\infty \big) \Big),
\end{align*}
which implies that  $$ \| (\delta A)_t \|_{L^2(0,T \times \mathbb{T}^d)} \leq C \| \nabla \delta \mathbf{u} \|_{L^2(0,T \times \mathbb{T}^d)}.$$
Combining this with \eqref{eq-5} yields  that
\begin{align}
\| (\delta A)_t \bar{\mathbf{u}}^2 \|_{L^{4/3}(0,T; L^{3/2})} &\leq c(T) \| \nabla \delta \mathbf{u} \|_{L^2(0,T \times \mathbb{T}^d)},
\label{eq-3}\\ \notag
\text{and thus} \quad \| \varphi_t \|_{L^{4/3}(0,T; L^{3/2})} &\leq c(T) \| \nabla \delta \mathbf{u} \|_{L^2(0,T; L^2)}.
\end{align}
Collecting results from \eqref{eq-a}, \eqref{eq-b}, \eqref{eq-c} and \eqref{eq-3}, we obtain
\begin{equation}
\| \varphi \|_{L^4(0,T; L^2)} + \| \nabla \varphi \|_{L^2(0,T \times \mathbb{T}^d)} + \| \varphi_t \|_{L^{4/3}(0,T; L^{3/2})} \leq c(T) \| \nabla \delta \mathbf{u} \|_{L^2(0,T \times \mathbb{T}^d)}.\label{eq-1}
\end{equation}
Next, let us restate the equations for $(\delta \mathbf{u}, \delta \phi,\delta \mu, \delta P)$ as the following system for $(\psi, \delta \phi,\delta\mu, \delta P)$:
\begin{equation}
\begin{cases}
\rho_0 \,\psi_t - \mathop{\mathrm{div}_{\mathbf{u}^1}} \left( \nu(\bar{\phi}^1) \, \nabla_{\mathbf{u}^1} \psi + \nabla_{\mathbf{u}^1} \delta P \right.
= \delta f_1-\rho_0 \,\varphi_t+\mathop{\mathrm{div}_{\mathbf{u}^1}} \left( \nu(\bar{\phi}^1) \, \nabla_{\mathbf{u}^1} \varphi \right)+\rho_0 \, \bar{\mu}^1 \, \nabla_{\mathbf{u}^1} \delta \phi \label{eq-10}\\
- \rho_0 \left( |\bar{\phi}^1|^3 + \bar{\phi}^1 \right) \, \nabla_{\mathbf{u}^1} \delta \phi
-\nabla_{\mathbf{u}^1}^2 \bar{\phi}^1 \, \nabla_{\mathbf{u}^1} \delta \phi  , \\
\mathop{\mathrm{div}_{\mathbf{u}^1}} \psi = 0, \\
\rho_0 \, \delta \phi_t - \Delta_{\mathbf{u}^1} \delta \mu = \delta f_2, \\
\rho_0 \, \delta \mu + \Delta_{\mathbf{u}^1} \delta \phi - \rho_0 \left( \delta \phi^3 - \delta \phi \right) = \delta f_3, \\
(\delta \mathbf{u}, \delta \phi, \delta \mu) \big\vert_{t=0}= (0, 0, 0).
\end{cases}
\end{equation}
Due to $\operatorname{div}_{\mathbf{u}^1} \psi = 0$, we have
\[
\int_{\Omega} (\nabla_{\mathbf{u}^1} \delta P) \cdot \psi  \, dx = - \int_{\Omega} \operatorname{div}_{\mathbf{u}^1} \psi  \cdot \delta P \, dx = 0.
\]
Taking the \( L^2 \)-scalar product of the first equation in the system \eqref{eq-10} with \( \psi \), the third equation with \( \delta \phi \) and the fourth equation with \( \delta \mu \) respectively,   finally adding the three equations together, we infer that
\begin{equation}
\begin{split}
&\frac{1}{2} \frac{d}{dt} \int_{\Omega} \rho_0 \big( |\psi|^2 + |\delta \phi|^2 \big) dx + \int_{\Omega} \big( \nu(\bar{\phi}^1) |\nabla_{\mathbf{u}^1} \psi|^2 + \rho_0 |\ \delta \mu|^2 \big) dx \\
&\leq -\int_\Omega \rho_0 \partial_t \varphi \cdot \psi \, dx + \int_\Omega div_{\mathbf{u}^1} (\nu(\bar{\phi^1}) \nabla_{\mathbf{u}^1} \varphi) \cdot \psi \, dx + \int_\Omega \rho_0 \bar{\mu}_1 \nabla_{\mathbf{u}^1} \delta\phi \, \psi \, dx \\
&\quad-\int_\Omega \rho_0 (|\bar{\phi}^1|^3 - \bar{\phi}^1) \nabla_{\mathbf{u}^1} \delta\phi \, \psi \, dx - \int_\Omega\nabla^2_{\mathbf{u}^1} \bar{\phi^1} \nabla_{\mathbf{u}^1} \delta\phi\, \psi \, dx + \int_\Omega (\nu(\bar{\phi^1}) \nabla_{\mathbf{u}^2} \bar{\mathbf{u}}^2 \nabla_{\mathbf{u}^2}\psi - \nu(\bar{\phi^1}) \nabla_{\mathbf{u}^1} \bar{u}^2 \nabla_{u^1}\psi) \, dx \\
&\quad-\int_\Omega (\nabla_{\mathbf{u}^2} - \nabla_{\mathbf{u}^1}) \bar{P}^2 \, \psi \, dx + \int_\Omega (\rho_0 \bar{\mu}^2 \nabla_{\mathbf{u}^2} - \rho_0 \bar{\mu}^1 \nabla_{\mathbf{u}^1}) \bar{\phi}^2 \, \psi \, dx \\
&\quad-\int_\Omega \rho_0 (|\bar{\phi}^2|^3 \nabla_{\mathbf{u}^2} - |\bar{\phi}^1|^3 \nabla_{\mathbf{u}^1}) \bar{\phi}^2 \, \psi \, dx + \int_\Omega \rho_0 (\bar{\phi}^2 \nabla_{\mathbf{u}^2} - \bar{\phi}^1 \nabla_{\mathbf{u}^1}) \bar{\phi}^2 \, \psi \, dx \\
&\quad-\int_\Omega (\nabla_{\mathbf{u}^2}^2 \bar{\phi}^2 \nabla_{\mathbf{u}^2} - \nabla_{\mathbf{u}^1}^2 \bar{\phi}^1 \nabla_{\mathbf{u}^1}) \bar{\phi}^2 \, \psi \, dx + \int_\Omega \Delta_{\mathbf{u}^1} \delta\mu \delta\phi \, dx + \int_\Omega (\Delta_{\mathbf{u}^2} - \Delta_{\mathbf{u}^1}) \bar{\mu}^2 \delta\phi \, dx \\
&\quad-\int_\Omega \Delta_{\mathbf{u}^1} \delta\phi \delta \mu \, dx + \int_\Omega \rho_0 (\delta\phi^3 - \delta\phi) \delta \mu \, dx - \int_\Omega (\Delta_{\mathbf{u}^2} - \Delta_{\mathbf{u}^1}) \bar{\phi}^2 \delta \mu \, dx
\\&\triangleq\sum_{i = 1}^{16} M_i
\end{split}
\end{equation}
Here and in what follows, we shall estimate term by term above. For \( M_1 \), it follows from H\"{o}lder's inequality that
\[
\int_0^T M_1(t)dt \leq \| \rho_0 \|_\infty^{3/4} \| \varphi_t \|_{L^{4/3}(0,T; L^{3/2})} \| \rho_0^{1/4} \psi \|_{L^4(0,T; L^3)}.
\]
Using H\"{o}lder's inequality and the Sobolev embedding \( H^1(\Omega) \hookrightarrow L^6(\Omega) \) yields that
\[
\| \rho_0^{1/4} \psi \|_{L^4(0,T; L^3)} \leq \| \sqrt{\rho_0} \psi \|_{L^\infty(0,T; L^2)}^{1/2} \| \psi \|_{L^2(0,T; L^6)}^{1/2} \leq C \| \sqrt{\rho_0} \psi \|_{L^\infty(0,T; L^2)}^{1/2} \| \psi \|_{L^2(0,T; H^1)}^{1/2}.
\]
Taking advantage of \eqref{eq-7} and \eqref{eq-1}, we conclude that
\[
\int_0^T M_1(t)dt \leq c(T) \Big( \| \sqrt{\rho_0} \psi \|_{L^\infty(0,T; L^2)} + \| \nabla \psi \|_{L^2(0,T \times \Omega} \Big)^{1/2} \| \sqrt{\rho_0} \psi \|_{L^\infty(0,T; L^2)}^{1/2} \| \nabla \delta \mathbf{u} \|_{L^2(0,T \times \Omega)}.
\]
For \( M_2 \), it follows from integrating by parts and using \eqref{eq-1}, that
\[
\begin{split}
\int_0^T M_2(t)dt &\leq \nu^* \int_0^T \left| \int_{\Omega} \nabla_{\mathbf{u}^1} \varphi \nabla_{\mathbf{u}^1} \psi dx \right| dt \\
&\leq \nu^* \int_0^T \int_{\Omega} |\nabla_{u^1} \varphi| |\nabla_{\mathbf{u}^1} \psi| dxdt \\
&\leq \frac{\nu^*}{2} \int_0^T \|\nabla_{\mathbf{u}^1}\psi\|_2^2 dt + \frac{\nu^*}{2} \int_0^T \|\nabla_{\mathbf{u}^1} \varphi\|_2^2 dt \\
&\leq \frac{\nu^*}{2} \int_0^T \|\nabla_{\mathbf{u}^1} \psi\|_2^2 dt + c(T) \int_0^T \|\nabla \delta \mathbf{u}\|_2^2 dt.
\end{split}
\]
For \( M_3 \) and \( M_4 \),  it follows from H\"{o}lder's inequality that
\begin{align*}
\int_{0}^{t} M_{3}(t) dt &\leq \|\sqrt\rho_0 \psi\|_{L^{\infty}(0,T; L^2(\Omega)} \Big( \|\nabla \bar{\phi^1}\|_{L^2(0,T; L^4(\Omega)} + \|\nabla \bar{\phi^2}\|_{L^2(0,T; L^4(\Omega)} \Big) \|\bar{\mu^1}\|_{L^2(0,T; L^4(\Omega)}, \\
\int_{0}^{t} M_{4}(t) dt &\leq \|\sqrt\rho_0 \psi\|_{L^{\infty}(0,T; L^2(\Omega))} \Big( \|\nabla \bar{\phi^1}\|_{L^2(0,T; L^4(\Omega))} + \|\nabla \bar{\phi^2}\|_{L^2(0,T; L^4(\Omega)} \Big)\\
&\quad\times\Big( \||\nabla \bar{\phi^1}|^{3}\|_{L^2(0,T; L^4(\Omega)} + \|\bar{\phi^1}\|_{L^2(0,T; L^4(\Omega)} \Big).
\end{align*}
Thanks to \eqref{eq-8} and \eqref{eq-9}, we get
\begin{align*}
&\| \nabla A \|_{L^\infty (0,T; L^3(\Omega)} \leq C \| \nabla^2 \mathbf{u}(t, X(\tau, \cdot)) \|_{L^1(0,T; L^3(\Omega)} \| \nabla_y X \|_{L_t^\infty (L_y^\infty)} \leq C t^{\frac{1}{4}},\\
&\int_{0}^{t} M_{5}(t) dt \leq\int_{0}^{t} \bigl\lvert (A_1^T \nabla A_1^T \bar{\phi^1} + |A_1^T|^2 \nabla^2 \bar{\phi^1}) \, |A_1^T \psi \, (|\nabla \bar{\phi^2}| + |\nabla \bar{\phi^1}|) \bigr\rvert \, dt \\
&\leq C \rho_0 \,\|\sqrt\rho_0 \psi\|_{L^\infty(0,T; L^2(\Omega)}\Big(\|\nabla A_1^T\|_{L^\infty(0,T; L^3(\Omega)} \,\|A_1^T\|_{L^\infty(0,T; L^\infty(\Omega)}^2
\|\bar{\phi^1}\|_{L^\infty(0,T\times\Omega)}\\
&\quad+\|A_1^T\|_{L^\infty(0,T; L^\infty(\Omega)}^3\,
\|\nabla^2 \bar{\phi^1}\|_{L^2(0,T; L^3(\Omega)} \,\Big)\Big( \|\nabla \bar{\phi^2}\|_{L^2(0,T; L^{6}(\Omega)} + \|\nabla \bar{\phi^1}\|_{L^2(0,T; L^{6}(\Omega)} \Big).
\end{align*}
For \( M_6 \), using \eqref{eq-6} and \eqref{eq-7}, we deduce that
\begin{align*}
M_6 &\leq C \left| \int_{\Omega}\Dv\big((\delta A^T A_2 + A_1^T \delta A)\nabla \bar{\mathbf{u}}^2\big) \cdot \psi \, dx \right| \\
&\leq C \int_{\Omega} \big| \delta A^T A_2 + A_1^T \delta A \big| \| \nabla \bar{\mathbf{u}}^2 \| |\nabla \psi| \, dx \\
&\leq C \| t^{-1/2} \delta A \|_2 \| t^{1/2} \nabla \bar{\mathbf{u}}^2 \|_\infty \| \nabla \psi \|_2,
\end{align*}
which together with \eqref{eq-5} and \eqref{eq-4} implies that
\begin{align*}
\int_0^T M_6(t) dt &\leq \| t^{-1/2} \delta A \|_{L^\infty(0,T; L^2)} \| t^{1/2} \nabla \bar{\mathbf{u}}^2 \|_{L^2(0,T; L^\infty)} \| \nabla \psi \|_{L^2(0,T \times \Omega)} \\
&\leq c(T) \| \nabla \delta \mathbf{u} \|_{L^2(0,T; L^2)} \| \nabla \psi \|_{L^2(0,T \times \Omega)}.
\end{align*}
For \( M_7 \), using Hl\"{o}der's inequality, we obtain
\[
M_7(t) \leq \left| \int_{\Omega} \delta A \nabla \bar{P}^2 \psi \, dx \right| \leq C \| t^{-1/2} \delta A \|_2 \| t^{1/2} \nabla \bar{P}^2 \|_3 \| \psi \|_6.
\]
It then follows from \eqref{eq-5}, \eqref{eq-4} and Sobolev embedding that
\begin{align*}
\int_0^T M_7(t) dt &\leq \| t^{-1/2} \delta A \|_{L^\infty(0,T; L^2)} \| t^{1/2} \nabla \bar{P}^2 \|_{L^2(0,T; L^3)} \| \psi \|_{L^2(0,T; H^1)} \\
&\leq C(T) \| \nabla \delta \mathbf{u}\|_{L^2(0,T; L^2)} \left( \| \sqrt{\rho_0} \psi \|_{L^\infty(0,T; L^2(\Omega))} + \| \nabla \psi \|_{L^2(0,T \times \Omega)} \right),\\
\int_{0}^{T} M_8(t) \, dt &\leq \|\rho_0 \psi\|_{L^\infty(0,T; L^2(\Omega)} \, (\|\bar{\mu}^2\|_{L^2(0,T; L^4(\Omega))} \|\nabla A_2^T\|_{L^\infty(0,T\times \Omega)}
\\ &\quad+ \|\bar{\mu}^1\|_{L^2(0,T; L^4(\Omega))} \|\nabla A_1^T\|_{L^\infty(0,T\times \Omega)})\|\nabla\bar{\phi}^2\|_{L^2(0,T; L^4(\Omega)},\\
\int_0^T M_9(t) \, dt &\leq \| \rho_0 \psi \|_{L^\infty(0,T;L^2(\Omega)} \left( \| A_2^T \|_{L^\infty(0,T\times\Omega)} \| |\bar{\phi^2}|^3 \|_{L^2(0,T;L^4(\Omega)} \right. \\
&\quad+ \| A_1^T \|_{L^\infty(0,T\times\Omega)} |\bar{\phi^1}|^3 \|_{L^2(0,T;L^4(\Omega)} \left. \| \nabla \bar{\phi^2}\|_{L^2(0,T;L^4(\Omega)} \right) \\
\int_0^T M_{10}(t) \, dt &\leq \|\rho_0 \psi \|_{L^\infty(0,T;L^2(\Omega)} \left( \| A_2^T \|_{L^\infty(0,T\times\Omega)} \| \bar{\phi^2} \|_{L^2(0,T;L^4(\Omega)} \right.\\
&\quad+ \| A_1^T \|_{L^\infty(0,T\times\Omega)} \|\bar{\phi^1} \|_{L^2(0,T;L^4(\Omega)} \left. \| \nabla \bar{\phi^2} \|_{L^2(0,T;L^4(\Omega)} \right), \\
\int_0^T M_{11}(t) \, dt &\leq C_0 \| \rho_0 \psi \|_{L^\infty(0,T;L^2(\Omega)}\| \nabla \bar{\phi^2} \|_{L^1(0,T;L^\infty(\Omega)} \\
 &\quad \times\Big( \| \nabla A_2^T \|_{L^\infty(0,T;L^3(\Omega)} \| A_2^T \|_{L^\infty(0,T\times\Omega)}^2 | \nabla \bar{\phi^2} \|_{L^\infty(0,T;L^6(\Omega)}
 \\&\quad + \| \nabla A_1^T \|_{L^\infty(0,T;L^3(\Omega)}\| A_1^T \|_{L^\infty(0,T\times\Omega)}^3 \| \nabla^2 \bar{\phi}^1 \|_{L^\infty(0,T;L^6(\Omega)} \Big),
\end{align*}
\begin{align*}
\int_{0}^{T} M_{12}(t) \, dt &\leq \| A_{1}^T \|_{L^{\infty}(0,T \times \Omega)}^2 \left( \| \nabla \bar{\mu}^2 \|_{L^2(0,T \times \Omega)} + \| \nabla \bar{\mu}^1 \|_{L^2(0,T \times \Omega)} \right) \\
&\quad \times \left( \| \nabla \bar{\phi}^2 \|_{L^2(0,T \times \Omega)} + \| \nabla \bar{\phi}^1 \|_{L^2(0,T \times \Omega)} \right),\\
\int_{0}^{T} M_{13}(t) \, dt &\leq \left( \| A_2^T \|_{L^{\infty}(0,T \times \Omega)}^2 + \| A_1^T \|_{L^{\infty}(0,T \times \Omega)}^2 \right) \| \nabla \bar{\mu}^2 \|_{L^2(0,T \times \Omega)}\\&\quad\times \left( \| \nabla \bar{\phi}^2 \|_{L^2(0,T \times \Omega)} + \| \nabla \bar{\phi}^1 \|_{L^2(0,T \times \Omega)} \right), \\
\int_{0}^{T} M_{14}(t) \, dt &\leq \| A_1^T \|_{L^{\infty}(0,T \times \Omega)}^2 \left( \| \nabla \bar{\phi}^2 \|_{L^2(0,T \times \Omega)} + \| \nabla \bar{\phi}^1 \|_{L^2(0,T \times \Omega)} \right)\\&\quad\times \left( \| \nabla \bar{\mu}^2 \|_{L^2(0,T \times \Omega)} + \| \nabla \bar{\mu}^1 \|_{L^2(0,T \times \Omega)} \right), \\
\int_{0}^{T} M_{15}(t) dt &\leq \left( \left\| |\bar{\phi}^2|^3 \right\|_{L^2(0,T \times \Omega)} + \left\| |\bar{\phi}^1|^3 \right\|_{L^2(0,T \times \Omega)} \right) \left\| \sqrt{\rho_0} \delta \mu \right\|_{L^2(0,T \times \Omega)} \\
&\quad + \left\| \sqrt{\rho_0} \delta \phi \right\|_{L^{\infty}; L^2(\Omega)} \left( \left\| \bar{\mu}^2 \right\|_{L^1(0,T; L^2(\Omega))} + \left\| \bar{\mu}^1 \right\|_{L^1(0,T; L^2(\Omega))} \right), \\
\int_{0}^{T} M_{16}(t) dt &\leq \left( \left\| A_2^T \right\|_{L^{\infty}(0,T \times \Omega)}^2 + \left\| A_1^T \right\|_{L^{\infty}(0,T \times \Omega)}^2 \right)
\| \nabla \bar{\phi}^2 \|_{L^2(0,T \times \Omega)} \\&\quad+ \left(\| \nabla \bar{\mu}^2 \|_{L^2(0,T \times \Omega)}+\| \nabla \bar{\mu}^1 \|_{L^2(0,T \times \Omega)} \right).
\end{align*}
So altogether, and using \eqref{eq-3}, this gives for all small enough \( T > 0 \),
\begin{align}
\sup_{t \in [0,T]}& \big\| (\sqrt{\rho_0} \psi, \sqrt{\rho_0} \delta\psi) \big\|_2^2 + \big\| (\nabla \delta \mathbf{u}, \sqrt{\rho_0} \delta \mu) \big\|_{L^2(0,T; L^2)}^2  \label{eq-2}\\ \notag
&\leq c(T) \big\| (\nabla \delta \mathbf{u}, \sqrt{\rho_0} \delta \mu) \big\|_{L^2(0,T; L^2)}^2+c(T)\|\sqrt{\rho_0} \delta \phi\|_{L^\infty(0,T; L^2)}^2 .
\end{align}
By \eqref{eq-1}, we conclude that
\begin{equation*}\begin{split}
&\big\| (\nabla \delta \mathbf{u}, \sqrt{\rho_0} \delta \mu) \big\|_{L^2(0,T; L^2)}^2+\|\sqrt{\rho_0} \delta \phi\|_{L^\infty(0,T; L^2)}^2
\\&\leq c(T) \big\| (\nabla \delta \mathbf{u}, \sqrt{\rho_0} \delta \mu) \big\|_{L^2(0,T; L^2)}^2+c(T)\|\sqrt{\rho_0} \delta \phi\|_{L^\infty(0,T; L^2)}^2.
\end{split}
\end{equation*}
Hence \( \nabla \delta \mathbf{u} =  \delta \phi=\delta \mu \equiv 0 \) on \( [0,T] \times \Omega \) if \( T \) is small enough. Plugging that information into \eqref{eq-2} yields
that
\[
\big\| (\sqrt{\rho_0} \psi, \sqrt{\rho_0} \delta \phi) \big\|_{L^\infty(0,T; L^2)}^2 + \big\| (\nabla \psi, \sqrt{\rho_0} \delta \mu) \big\|_{L^2(0,T \times \Omega)}^2 = 0.
\]
Combining with \eqref{NSCH2} finally implies that \( \psi \equiv 0 \) on \( [0,T] \times \mathbb{T}^d \), and \eqref{eq-1} clearly yields \( \varphi \equiv 0 \). Therefore, for small enough \( T > 0 \),  we  conclude,  by \eqref{eq-1}, that
\[
\bar{\mathbf{u}}^1 = \bar{\mathbf{u}}^2, \quad \bar{\phi}^1 = \bar{\phi}^2, \quad \bar{\mu}^1 = \bar{\mu}^2 \quad \text{on} \quad [0,T] \times \Omega.
\]
Reverting to Eulerian coordinates, we finally deduce that  two  strong solutions of the system \eqref{NSCH} coincide on \( [0,T] \times \Omega\). 
\section{Declarations}



\noindent{\bf Competing interests  }\

On behalf of all authors, the corresponding author states that there is no potential conflicts of interest with respect to the research of this article.

\noindent{\bf Authors' contributions   }\

Lingxin Jiang, Jiahong Wu and Fuyi  Xu contributed equally to this work.

\noindent{\bf Funding   }\

Jiang and Xu were partially supported by the
National Natural Science Foundation of China (12326430), the  Natural Science Foundation of Shandong Province (ZR2021MA017).  Wu was partially supported by the  National Science Foundation of the United
States under DMS 2104682 and DMS 2309748.

\noindent{\bf Availability of data and materials  }\

Data and materials  sharing not applicable to this article as no data and  materials
 were generated or analyzed during the current study.

\begin{center}

\end{center}


\begin{thebibliography}{99}
\addcontentsline{toc}{section}{References}

\bibitem{A}
H. Abels, {\it On a diffuse interface model for two-phase flows of viscous, incompressible fluids with matched densities,}  Arch. Ration. Mech. Anal., 194 (2009), 463--506.

\bibitem{ADG}
H. Abels, D. Depner and  H. Garcke, {\it Existence of weak solutions for a diffuse interface model for two-phase flows of incompressible fluids with different densities,} J. Math. Fluid Mech., 15  (2013), 453--480.

\bibitem{AF}
H. Abels and  E. Feireisl, {\it On a diffuse interface model for a two-phase flow of compressible viscous fluids,} Indiana Univ. Math. J, 57 (2008), 659--698.

\bibitem{AW}
H. Abels and M. Wilke, {\it Convergence to equilibrium for the Cahn-Hilliard equation with a logarithmic free energy,} Nonlinear Anal., 67  (2007), 3176--3193.



\bibitem{HW}
H. Abels  and J. Weber, {\it Stationary Solutions for a Navier-Stokes/Cahn-Hilliard System with Singular Free Energies,} Recent Developments of Mathematical Fluid Mechanics. Basel: Springer Basel, (2016), 25--41.


\bibitem{AMW} D.M. Anderson, G.B. McFadden and  A.A. Wheeler, {\it Diffuse-interface methods in fluid mechanics,} Annu. Rev. Fluid Mech., 30 (1998), 139--165.

\bibitem{BDM}
T. Biswas, S. Dharmatti, P. Mahendranath and M. Mohan, {\it On the stationary nonlocal cahn-chilliard-navier-stokes system: existence, uniqueness and exponential stability,} Asymptot. Anal., 125 (2021), 59--99.


\bibitem{Bo} F. Boyer, {\it Mathematical study of multi-phase flow under shear through order parameter formulation,} Asymptot. Anal., 20 (1999), 175--212.


\bibitem{CG} C. Cao and  C.G. Gal,  {\it Global solutions for the 2D NSCH model for a two-phase flow of viscous, incompressible fluids with mixed partial viscosity and mobility,} Nonlinearity, 25 (2012), 3211--3234.




\bibitem{CHK} H. Choe and H. Kim,  {\it  Strong solutions of the Navier-Stokes equations for nonhomogeneous incompressible fluids,} Comm. Partial Differential Equations, 28
(2003), 1183--1201.




\bibitem{DM1} R. Danchin and P. Mucha,  {\it Divergence,} Discrete and Cont. Dyn. Systems S,
6 (2013), 1163--1172.

\bibitem{RB1}
R. Danchin and P. Mucha, {\it Incompressible flows with piecewise constant density,}  Arch. Ration. Mech. Anal., 207 (2013), 991--1023.


\bibitem{RB}
R. Danchin and P. Mucha, {\it The Incompressible Navier-Stokes Equations in Vacuum,} Comm. Pure
Appl. Math.,   72 (2019), 1351--1385.


\bibitem{B2}
B. Desjardins, {\it Regularity of weak solutions of the compressible isentropic Navier-Stokes equations,}  Commun. Partial Differ. Equ.,  22 (1997), 977--1008.


\bibitem{UM}
G. De Ugo Ue and  T.T. Medjo, {\it Convergence of the solution of the stochastic 3d globally modified cahn-hilliard-navier-stokes equations,} J. Differ. Equ., 265(2018), 545--592.


\bibitem{AMH}
A. Di Primio, M. Grasselli  and H. Wu, {\it Well-posedness of a Navier-Stokes-Cahn-Hilliard system for incompressible two-phase flows with surfactant,}  Math. Models Methods Appl. Sci., 33 (2023), 755--828.


\bibitem{TD}
T. Dlotko, {\it Navier-Stokes-Cahn-Hilliard system of equations,}  J. Math. Phys., 63  (2022), Article number  111511.

\bibitem{SGM}
S. Frigeri, C. G. Gal  and M. Grasselli, {\it Two-dimensional nonlocal Cahn-Hilliard-Navier-Stokes systems with variable viscosity, degenerate mobility and singular potential,} Nonlinearity,  32 (2019), Article number 678.



\bibitem{GKL} M-H. Giga, A. Kirshtein and C. Liu, {\it  Variational modeling and complex fluids, in: Handbook of Mathematical Analysis in Mechanics of Viscous Fluids,} Springer, Cham, (2018), 73--113.


\bibitem{GGM} C.G. Gal, M. Grasselli and  A. Miranville,  {\it Cahn-Hilliard-Navier-Stokes systems with moving contact lines,} Calc. Var. Partial Differ. Equ., 55(50) (2016), 1--47.

\bibitem{PG} P. Germain, {\it Strong solutions and weak-strong uniqueness for the nonhomogeneous Navier-Stokes system,} J. Anal. Math., 105 (2008), 169--196.

\bibitem{GPV}M.E. Gurtin, D. Polignone, J. Vi\~{n}als, {\it  Two-phase binary fluids and immiscible fluids described by an order parameter,} Math. Models Methods Appl. Sci., 6 (1996), 815--831.


\bibitem{AAR}
A. Giorgini, A. Miranville and R. Temam, {\it Uniqueness and regularity for the Navier--Stokes--Cahn--Hilliard system,} SIAM J. Math. Anal., 51 (2019), 2535--2574.



\bibitem{GT} A. Giorgini and  R. Temam, {\it Weak and strong solutions to the nonhomogeneous incompressible Navier-Stokes-Cahn-Hilliard system,}  J. Math. Pures Appl., 144 (2020), 194--249.

\bibitem{GT1} A. Giorgini,  A. Ndongmo Ngana, T. Tachim Medjo  and  R. Temam, {\it  Existence and regularity of strong solutions to a nonhomogeneous Kelvin-Voigt-Cahn-Hilliard system,} J. Differ. Equ., 372 (2023), 612--656.

\bibitem{GZ} H. Gomez and  K.G. van der Zee, {\it Computational phase-field modeling, in: Encyclopedia of Computational Mechanics}, Second Edition, (2018), 1--35.


\bibitem{HMR} M. Heida, J. M\'{a}lek and  K.R. Rajagopal, {\it On the development and generalizations of Cahn-Hilliard equations within a thermodynamic framework,} Z. Angew. Math. Phys., 63 (2012), 145--169.


\bibitem{HH} P.C. Hohenberg and  B.I. Halperin,  {\it Theory of dynamic critical phenomena,} Rev. Mod. Phys., 49 (1977), 435--479.

\bibitem{DH} D. Hoff, {\it Uniqueness of weak solutions of the Navier-Stokes equations of multidimensional compressible flow,} SIAM J. Math. Anal., 37 (2006), 1742--1760.


\bibitem{Lijinkai} J. Li,  {\it Local existence and uniqueness of strong solutions to the Navier-Stokes equations
with nonnegative density,}  J. Differ. Equ.,  263 (2017), 6512--6536.





\bibitem{LS} C. Liu and J. Shen, {\it A phase field model for the mixture of two incompressible fluids and its approximation by a Fourier-spectral method,} Phys. D., 179 (2003), 211--228.

\bibitem{LT} J. Lowengrub and L. Truskinovsky, {\it Quasi-incompressible Cahn-Hilliard fluids and
topoligical transitions,} Proc. R. Soc. Lond. Ser. A Math. Phys. Eng. Sci., 454 (1998), 2617--2654.

\bibitem{PZZ}
M. Paicu, P. Zhang P and Z. Zhang, {\it Global unique solvability of inhomogeneous Navier-Stokes equations with bounded density,} Commun. Partial Differ. Equ.,  38 (2013), 1208--1234.


\bibitem{T} R. Temam,
 {\it Infinite-dimensional dynamical systems in mechanics and physics},
Springer-Verlag, New York, 1997.

\bibitem{Temam} R. Temam, {\it Navier-Stokes equations,}
AMS Chelsea Publishing, Providence, 2001.


\bibitem{FM}
F. Xu, M. Zhang,   L. Qiao  and P.  Fu, {\it Global well-posedness of 3-D nonhomogeneous incompressible MHD equations with bounded nonnegative density,} J. Math. Anal. Appl., 512 (2022), Article number 126146.



\bibitem{FMQ} F. Xu, M. Zhang  and L. Qiao, {\it The unique global solvability of nonhomogeneous incompressible asymmetric fluids in two and three dimensions,} J. Math. Phys., 65, (2024), Article number 111505 .



\bibitem{B1}
B. You, {\it Global attractor of the Cahn-Hilliard-Navier-Stokes system with moving contact lines,} Comm. Pure
Appl. Anal., 18 (2019), 2283--2298.


\bibitem{ZH} L. Zhao, {\it Strong solutions to the density-dependent incompressible Cahn-Hilliard-Navier-Stokes system,}  J. Hyperbolic Differ. Equ., 16 (2019), 701--742.


\end{thebibliography}
\end{document}